\let\mathscr\mathcal
\DeclareFontFamily{U}{min}{}
\DeclareFontShape{U}{min}{m}{n}{<-> udmj30}{}
\setlist[enumerate,1]{label={(\arabic*)},itemsep=\parskip} %,leftmargin=0pt
\setlist[itemize,1]{itemsep=\parskip} %,leftmargin=0pt
\newlist{thmlist}{enumerate}{2}
\setlist[thmlist,1]{label={\em(\roman*)},ref={(\roman*)},%
  itemsep=\parskip,leftmargin=*,align=left}
\setlist[thmlist,2]{label={\em(\alph*)},ref={(\alph*)},%
  itemsep=\parskip,leftmargin=*,align=left,topsep=0.1cm}
\newlist{defnlist}{enumerate}{2}
\setlist[defnlist,1]{label={(\roman*)},ref={(\roman*)},itemsep=\parskip,%
  leftmargin=*,align=left}
\setlist[defnlist,2]{label={(\alph*)},ref={(\alph*)},itemsep=\parskip,%
  leftmargin=*,align=left,topsep=0.1cm}
 \newtheorem*{thm*}{Theorem}
\newtheorem{thm}[subsection]{Theorem}
\newtheorem{cor}[subsection]{Corollary}
\newtheorem{lem}[subsection]{Lemma}
\newtheorem{prop}[subsection]{Proposition}
\theoremstyle{plain}
\newtheorem{defn}[subsection]{Definition}
\newtheorem{rem}[subsection]{Remark}
\newtheorem{exam}[subsection]{Example}
\newtheorem{constr}[subsection]{Construction}
\renewcommand{\eqref}[1]{(\ref{#1})}
\numberwithin{equation}{subsection}
\newcommand{\nc}{\newcommand}
\nc{\renc}{\renewcommand}
\nc{\ssec}{\subsection}
\nc{\sssec}{\subsubsection}
\nc{\on}{\operatorname}
\nc{\term}[1]{#1\xspace}
\nc{\sA}{\ensuremath{\mathcal{A}}\xspace}
\nc{\sB}{\ensuremath{\mathcal{B}}\xspace}
\nc{\sC}{\ensuremath{\mathcal{C}}\xspace}
\nc{\sD}{\ensuremath{\mathcal{D}}\xspace}
\nc{\sE}{\ensuremath{\mathcal{E}}\xspace}
\nc{\sF}{\ensuremath{\mathcal{F}}\xspace}
\nc{\sG}{\ensuremath{\mathcal{G}}\xspace}
\nc{\sH}{\ensuremath{\mathcal{H}}\xspace}
\nc{\sI}{\ensuremath{\mathcal{I}}\xspace}
\nc{\sJ}{\ensuremath{\mathcal{J}}\xspace}
\nc{\sK}{\ensuremath{\mathcal{K}}\xspace}
\nc{\sL}{\ensuremath{\mathcal{L}}\xspace}
\nc{\sM}{\ensuremath{\mathcal{M}}\xspace}
\nc{\sN}{\ensuremath{\mathcal{N}}\xspace}
\nc{\sO}{\ensuremath{\mathcal{O}}\xspace}
\nc{\sP}{\ensuremath{\mathcal{P}}\xspace}
\nc{\sQ}{\ensuremath{\mathcal{Q}}\xspace}
\nc{\sR}{\ensuremath{\mathcal{R}}\xspace}
\nc{\sS}{\ensuremath{\mathcal{S}}\xspace}
\nc{\sT}{\ensuremath{\mathcal{T}}\xspace}
\nc{\sU}{\ensuremath{\mathcal{U}}\xspace}
\nc{\sV}{\ensuremath{\mathcal{V}}\xspace}
\nc{\sW}{\ensuremath{\mathcal{W}}\xspace}
\nc{\sX}{\ensuremath{\mathcal{X}}\xspace}
\nc{\sY}{\ensuremath{\mathcal{Y}}\xspace}
\nc{\sZ}{\ensuremath{\mathcal{Z}}\xspace}
\nc{\bA}{\ensuremath{\mathbf{A}}\xspace}
\nc{\bB}{\ensuremath{\mathbf{B}}\xspace}
\nc{\bC}{\ensuremath{\mathbf{C}}\xspace}
\nc{\bD}{\ensuremath{\mathbf{D}}\xspace}
\nc{\bE}{\ensuremath{\mathbf{E}}\xspace}
\nc{\bF}{\ensuremath{\mathbf{F}}\xspace}
\nc{\bG}{\ensuremath{\mathbf{G}}\xspace}
\nc{\bH}{\ensuremath{\mathbf{H}}\xspace}
\nc{\bI}{\ensuremath{\mathbf{I}}\xspace}
\nc{\bJ}{\ensuremath{\mathbf{J}}\xspace}
\nc{\bK}{\ensuremath{\mathbf{K}}\xspace}
\nc{\bL}{\ensuremath{\mathbf{L}}\xspace}
\nc{\bM}{\ensuremath{\mathbf{M}}\xspace}
\nc{\bN}{\ensuremath{\mathbf{N}}\xspace}
\nc{\bO}{\ensuremath{\mathbf{O}}\xspace}
\nc{\bP}{\ensuremath{\mathbf{P}}\xspace}
\nc{\bQ}{\ensuremath{\mathbf{Q}}\xspace}
\nc{\bR}{\ensuremath{\mathbf{R}}\xspace}
\nc{\bS}{\ensuremath{\mathbf{S}}\xspace}
\nc{\bT}{\ensuremath{\mathbf{T}}\xspace}
\nc{\bU}{\ensuremath{\mathbf{U}}\xspace}
\nc{\bV}{\ensuremath{\mathbf{V}}\xspace}
\nc{\bW}{\ensuremath{\mathbf{W}}\xspace}
\nc{\bX}{\ensuremath{\mathbf{X}}\xspace}
\nc{\bY}{\ensuremath{\mathbf{Y}}\xspace}
\nc{\bZ}{\ensuremath{\mathbf{Z}}\xspace}
\nc{\dA}{\ensuremath{\mathds{A}}\xspace}
\nc{\dB}{\ensuremath{\mathds{B}}\xspace}
\nc{\dC}{\ensuremath{\mathds{C}}\xspace}
\nc{\dD}{\ensuremath{\mathds{D}}\xspace}
\nc{\dE}{\ensuremath{\mathds{E}}\xspace}
\nc{\dF}{\ensuremath{\mathds{F}}\xspace}
\nc{\dG}{\ensuremath{\mathds{G}}\xspace}
\nc{\dH}{\ensuremath{\mathds{H}}\xspace}
\nc{\dI}{\ensuremath{\mathds{I}}\xspace}
\nc{\dJ}{\ensuremath{\mathds{J}}\xspace}
\nc{\dK}{\ensuremath{\mathds{K}}\xspace}
\nc{\dL}{\ensuremath{\mathds{L}}\xspace}
\nc{\dM}{\ensuremath{\mathds{M}}\xspace}
\nc{\dN}{\ensuremath{\mathds{N}}\xspace}
\nc{\dO}{\ensuremath{\mathds{O}}\xspace}
\nc{\dP}{\ensuremath{\mathds{P}}\xspace}
\nc{\dQ}{\ensuremath{\mathds{Q}}\xspace}
\nc{\dR}{\ensuremath{\mathds{R}}\xspace}
\nc{\dS}{\ensuremath{\mathds{S}}\xspace}
\nc{\dT}{\ensuremath{\mathds{T}}\xspace}
\nc{\dU}{\ensuremath{\mathds{U}}\xspace}
\nc{\dV}{\ensuremath{\mathds{V}}\xspace}
\nc{\dW}{\ensuremath{\mathds{W}}\xspace}
\nc{\dX}{\ensuremath{\mathds{X}}\xspace}
\nc{\dY}{\ensuremath{\mathds{Y}}\xspace}
\nc{\dZ}{\ensuremath{\mathds{Z}}\xspace}
\nc{\bbA}{\ensuremath{\mathbb{A}}\xspace}
\nc{\bbB}{\ensuremath{\mathbb{B}}\xspace}
\nc{\bbC}{\ensuremath{\mathbb{C}}\xspace}
\nc{\bbD}{\ensuremath{\mathbb{D}}\xspace}
\nc{\bbE}{\ensuremath{\mathbb{E}}\xspace}
\nc{\bbF}{\ensuremath{\mathbb{F}}\xspace}
\nc{\bbG}{\ensuremath{\mathbb{G}}\xspace}
\nc{\bbH}{\ensuremath{\mathbb{H}}\xspace}
\nc{\bbI}{\ensuremath{\mathbb{I}}\xspace}
\nc{\bbJ}{\ensuremath{\mathbb{J}}\xspace}
\nc{\bbK}{\ensuremath{\mathbb{K}}\xspace}
\nc{\bbL}{\ensuremath{\mathbb{L}}\xspace}
\nc{\bbM}{\ensuremath{\mathbb{M}}\xspace}
\nc{\bbN}{\ensuremath{\mathbb{N}}\xspace}
\nc{\bbO}{\ensuremath{\mathbb{O}}\xspace}
\nc{\bbP}{\ensuremath{\mathbb{P}}\xspace}
\nc{\bbQ}{\ensuremath{\mathbb{Q}}\xspace}
\nc{\bbR}{\ensuremath{\mathbb{R}}\xspace}
\nc{\bbS}{\ensuremath{\mathbb{S}}\xspace}
\nc{\bbT}{\ensuremath{\mathbb{T}}\xspace}
\nc{\bbU}{\ensuremath{\mathbb{U}}\xspace}
\nc{\bbV}{\ensuremath{\mathbb{V}}\xspace}
\nc{\bbW}{\ensuremath{\mathbb{W}}\xspace}
\nc{\bbX}{\ensuremath{\mathbb{X}}\xspace}
\nc{\bbY}{\ensuremath{\mathbb{Y}}\xspace}
\nc{\bbZ}{\ensuremath{\mathbb{Z}}\xspace}
\nc{\mrm}[1]{\ensuremath{\mathrm{#1}}\xspace}
\nc{\mit}[1]{\ensuremath{\mathit{#1}}\xspace}
\nc{\mbf}[1]{\ensuremath{\mathbf{#1}}\xspace}
\nc{\mcal}[1]{\ensuremath{\mathcal{#1}}\xspace}
\nc{\msc}[1]{\ensuremath{\mathscr{#1}}\xspace}
\nc{\mfr}[1]{\ensuremath{\mathfrak{#1}}\xspace}
\renc{\bar}[1]{\overline{#1}}
\nc{\sub}{\subset}
\nc{\too}{\longrightarrow}
\nc{\hook}{\hookrightarrow}
\nc*{\hooklongrightarrow}{\ensuremath{\lhook\joinrel\relbar\joinrel\rightarrow}}
\nc{\hooklong}{\hooklongrightarrow}
\nc{\twoheadlongrightarrow}{\relbar\joinrel\twoheadrightarrow}
\nc{\shiso}{\approx}
\nc{\isoto}{\xrightarrow{\sim}}
\nc{\isofrom}{\xleftarrow{\sim}}
\renc{\ge}{\geqslant}
\renc{\le}{\leqslant}
\nc{\id}{\mathrm{id}}
\DeclareMathOperator{\Hom}{\on{Hom}}
\nc{\uHom}{\underline{\smash{\Hom}}}
\DeclareMathOperator{\Maps}{\on{Maps}}
\DeclareMathOperator{\End}{\on{End}}
\nc{\uEnd}{\underline{\smash{\End}}}
\renc{\lim}{\varprojlim}
\newcommand{\colim@}[2]{%
  \vtop{\m@th\ialign{##\cr
    \hfil$#1\operator@font colim$\hfil\cr
    \noalign{\nointerlineskip\kern1.5\ex@}#2\cr
    \noalign{\nointerlineskip\kern-\ex@}\cr}}%
}
\newcommand{\colim}{%
  \mathop{\mathpalette\colim@{\rightarrowfill@\textstyle}}\nmlimits@
}
\nc{\Cofib}{\on{Cofib}}
\nc{\Fib}{\on{Fib}}
\nc{\initial}{\varnothing}
\nc{\op}{\mathrm{op}}
\nc{\Spc}{\mrm{Spc}}
\nc{\Spt}{\mrm{Spt}}
\nc{\Spec}{\on{Spec}}
\nc{\Stk}{\mrm{Stk}}
\nc{\Sch}{\mrm{Sch}}
\nc{\aff}{\mrm{aff}}
\nc{\A}{\mbf{A}}
\renc{\P}{\mbf{P}}
\nc{\cl}{{\mrm{cl}}}
\nc{\bDelta}{\mathbf{\Delta}}
\nc{\un}{\mathbf{1}}
\nc{\Tot}{\on{Tot}}
\nc{\Cech}{\textnormal{\v{C}}}
\nc{\Mod}{\mrm{Mod}}
\nc{\Qcoh}{\on{Qcoh}}
\nc{\free}{\mrm{free}}
\nc{\perf}{\mrm{perf}}
\nc{\aperf}{\mrm{aperf}}
\nc{\coh}{\mrm{coh}}
\nc{\Einfty}{{\sE_\infty}}
\nc{\modmod}{/\!\!/}
\nc{\heart}{\heartsuit}
\nc{\proj}{\mrm{proj}}
\nc{\K}{\on{K}}
\nc{\G}{\on{G}}
\nc{\GL}{\on{GL}}
\nc{\BGL}{\on{BGL}}
\nc{\M}{\on{M}}
\nc{\KH}{\on{KH}}
\nc{\Alg}{\on{Alg}}
\nc{\CAlg}{\on{CAlg}}
\nc{\cn}{\mrm{cn}}
\nc{\hw}{\mrm{Hw}}
\nc{\htt}{\mrm{Ht}}
\nc{\Fun}{\on{Fun}}
\nc{\Funadd}{\on{Fun}_{\mrm{add}}}
\nc{\Funex}{\on{Fun}_{\mrm{ex}}}
\nc{\Ind}{\on{Ind}}
\nc{\Pro}{\on{Pro}}
\nc{\Kar}{\on{Kar}}
\nc{\Obj}{\on{Obj}}
\nc{\scr}{\term{simplicial commutative ring}}
\nc{\scrs}{\term{simplicial commutative rings}}
\nc{\Einfring}{\term{$\Einfty$-ring}}
\nc{\Einfrings}{\term{$\Einfty$-rings}}
\nc{\Ering}{\term{$\sE_1$-ring}}
\nc{\Erings}{\term{$\sE_1$-rings}}
\nc{\inftyCat}{\term{$\infty$-category}}
\nc{\inftyCats}{\term{$\infty$-categories}}
\nc{\inftyTop}{\term{$\infty$-topos}}
\nc{\inftyTops}{\term{$\infty$-toposes}}
\nc{\inftyGrpd}{\term{$\infty$-groupoid}}
\nc{\inftyGrpds}{\term{$\infty$-groupoids}}
\title{}
\begin{document}

\title{$\mathbb{A}^1$-invariance of localizing invariants}
\author{Vladimir Sosnilo}
\address{
M309\\
Universit{\"a}t Regensburg\\
Universit{\"a}tsstra{\ss}e 31 \\
93053 Regensburg\\
Germany
}
\email{\href{mailto:vsosnilo@gmail.com}{vsosnilo@gmail.com}}

\bibliographystyle{alphamod}

\begin{abstract}
Weibel proved that $p$-inverted K-theory is 
$\mathbb{A}^1$-invariant on $\mathbb{F}_p$-schemes and K-theory with $\mathbb{Z}/p$-coefficients 
is $\mathbb{A}^1$-invariant on $\mathbb{Z}[\frac{1}{p}]$-schemes. 
We extend this result to all finitary localizing invariants of small stable $\infty$-categories. 
Along the way we study the Frobenius and Verschiebung endofunctors defined by Tabuada 
and provide a categorical version of Stienstra's projection formula.
\end{abstract}

\maketitle

\setcounter{tocdepth}{1}
\tableofcontents

\section*{Introduction}

In the very foundation of motivic homotopy theory lies the idea of enforcing the affine line to be 
contractible, 
hence imposing the {\it $\mathbb{A}^1$-invariance property}\footnote{also called {\it homotopy invariance}} on all cohomology theories of one's interest. 
Concretely, a cohomology theory\footnote{i.e. a contravariant functor from the category of schemes} $E$ is said to be {\it $\mathbb{A}^1$-invariant} if the projection map induces an equivalence
\[
E(X) \simeq E(X \times \mathbb{A}^1)
\]
for all schemes $X$.
This is not as restrictive as it sounds at first, because many important cohomology theories satisfy 
$\mathbb{A}^1$-property automatically on regular schemes. For example, this is true for K-theory, 
Chow groups, Morel-Levine's algebraic cobordism in characteristic 0 (see \cite{Levine:2007}). %0807.2238
%Introducing $\mathbb{A}^1$-invariance allows to build a powerful theory of {\it motivic spaces} and {\it 
%motivic spectra} and to prove various properties 

However, in general one is also interested in studying singular schemes. 
In this context K-theory is known not to be $\mathbb{A}^1$-invariant---moreover, conjecturally 
$\mathbb{A}^1$-invariance of K-theory on a scheme detects its regularity (\cite[Conjecture]{Vorst1979}, \cite{Vorst_conj_GH}). 
As another example, the
derived algebraic cobordism of Annala is not $\mathbb{A}^1$-invariant (see \cite[Theorem~236]{annala_thesis}). 
This motivates the quest for {\it non-$\mathbb{A}^1$-invariant motivic homotopy theory}. 
\ssec{Motivic theories} One approach is 
to replace $\mathbb{A}^1$-invariance with the property of satisfying the 
projective bundle formula. This is done 
via  {\it pbf-local sheaves with transfers} 
and {\it pbf-local $\mathbb{P}^1$-spectra} in the recent works of Annala and Iwasa (see 
\cite{annala2022motivic} and \cite{AnnalaIwasa}) and also their joint work with Hoyois \cite{annala2023algebraic}. 
Another more na{\"i}ve approach is to study {\it localizing invariants} (aka {\it noncommutative motives}) whose 
definition does not refer to any geometric properties at all but is nevertheless sensible enough 
to reflect many of them. Recall that a localizing invariant is a functor $E$ from the $\infty$-category of small idempotent complete stable $\infty$-categories $\mrm{Cat}^{\mrm{perf}}_\infty$ to spectra $\Spt$ satisfying: 
\begin{itemize}
\item for any exact sequence $\sA \to \sB \to \sC$ 
\[
E(\sA) \to E(\sB) \to E(\sC)
\]
is a fiber sequence.
\end{itemize}
We say that a localizing invariant $E$ is finitary if it preserves filtered colimits\footnote{In \cite{blumberg2013universal} this property was included in the definition of a 
localizing invariant.}. Any localizing 
invariant gives rise to a pbf-local $\mathbb{P}^1$-spectrum. Given a commutative ring $k$ we will also consider localizing invariants 
of small $k$-linear stable $\infty$-categories. We call them $k$-localizing invariants.

Many ``motivic'' properties are shared by pbf-local $\mathbb{P}^1$-spectra and localizing invariants. 
To list a few, the Nisnevich descent \cite[Corollary~4.1.2]{BKRS}, the pro-cdh-descent for noetherian 
schemes \cite[Theorem~A.8]{Land_2019}, \cite[Theorem~C]{BKRS} hold for all localizing invariants. 
In the setting of pbf-local $\mathbb{P}^1$-spectra the motives of Grassmanians $\mrm{Gr}_n(S)$ and 
classifying stacks of vector bundles 
$\mathcal{V}ect_{n,S}$ can be computed as in the 
$\mathbb{A}^1$-invariant theory \cite[Corollary~4.2.5]{annala2022motivic}. Annala and Iwasa also prove that 
K-theory can be obtained via a direct 
analogue of Snaith's construction \cite[Theorem~0.1.1]{annala2022motivic} (the $\mathbb{A}^1$-invariant analogue was previously proved in \cite{Gepner2009OnTM}). 
\ssec{Back to {$\mathbb{A}^1$}-invariance}
In many examples we have some control over the failure of $\mathbb{A}^1$-invariance. 
In particular, this is illustrated by the following result of Weibel:
\begin{thm}[{\cite[Corollary~3.3]{weibel-nk}}]\label{thm:weibelKA1}
$K(-)[\frac{1}{p}]$ is $\mathbb{A}^1$-invariant on $\mathbb{F}_p$-schemes. 
Moreover, $K(-)/l$ is $\mathbb{A}^1$-invariant on $\mathbb{Z}[\frac{1}{l}]$-schemes.
\end{thm}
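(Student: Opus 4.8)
The plan is to reduce $\mathbb{A}^1$-invariance to the vanishing of a ``nil'' term and then exploit the big Witt vector module structure on it. Since $K$-theory satisfies Nisnevich descent and is finitary, it suffices to prove both assertions for affine schemes $\Spec R$. For a commutative ring $R$ the inclusion $R\hookrightarrow R[t]$ is split by $t\mapsto 0$, so $K(R[t])\simeq K(R)\oplus NK(R)$ with $NK(R):=\mathrm{cofib}(K(R)\to K(R[t]))$; writing $NK_*(R):=\pi_*NK(R)$, $\mathbb{A}^1$-invariance of $K(-)[\tfrac1p]$ (resp. of $K(-)/l$) on $R$ is then equivalent to $NK(R)[\tfrac1p]=0$ (resp. $NK(R)/l=0$), i.e. to: every element of $NK_*(R)$ is killed by a power of $p$ (resp. $l$ acts invertibly on $NK_*(R)$). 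So everything reduces to controlling $NK_*(R)$.

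The heart of the matter is the classical structure theory of $NK_*$ due to Bloch, van der Kallen, Stienstra and Weibel: each $NK_n(R)$ is naturally a module over the ring $W(R)=1+tR[[t]]$ of big Witt vectors, with the action generated by the Frobenius operators $F_m$ and Verschiebung operators $V_m$ (satisfying $F_mV_m=m$ and Stienstra's projection formula $V_m(a\cdot F_mx)=V_m(a)\cdot x$); moreover this module is \emph{continuous}, meaning $NK_n(R)$ is the union of the submodules annihilated by the ideals $1+t^{m+1}R[[t]]\subseteq W(R)$, each of which is a module over the truncated Witt ring $W_m(R)=W(R)/(1+t^{m+1}R[[t]])\cong 1+tR[t]/(t^{m+1})$. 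Establishing this --- the action, the projection formula, and the boundedness of representatives that yields continuity --- is the one substantial step; this is precisely the material the paper reworks through Tabuada's categorical Frobenius and Verschiebung endofunctors, which is what lets the whole argument run for an arbitrary finitary localizing invariant in place of $K$. Everything after it is the elementary commutative algebra of the rings $W_m$.

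For the characteristic-$p$ statement: the ring $W_m(\mathbb{F}_p)$ has underlying additive group $(1+t\mathbb{F}_p[t]/(t^{m+1}),\times)$ of order $p^m$, hence is annihilated by $p^m$; applying the ring map $W_m(\mathbb{F}_p)\to W_m(R)$ shows $W_m(R)$ is annihilated by $p^m$ for any $\mathbb{F}_p$-algebra $R$, so every $W_m(R)$-module is, in particular the $m$-th piece of $NK_n(R)$. Taking the union over $m$, every element of $NK_n(R)$ is killed by a power of $p$, so $NK(R)[\tfrac1p]=0$; by the reduction, $K(-)[\tfrac1p]$ is $\mathbb{A}^1$-invariant on affine, hence on all, $\mathbb{F}_p$-schemes.

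For the ``$l$ inverted'' statement: the binomial series $(1-t)^{1/l}=\sum_{n\ge0}\binom{1/l}{n}(-t)^n$ has all coefficients in $\mathbb{Z}[\tfrac1l]$, since for each prime $q\ne l$ one has $v_q\big(\prod_{j<n}(1-jl)\big)\ge v_q(n!)$ by counting residues; hence $(1-t)^{1/l}$ defines an element of $W(R)$ for every $\mathbb{Z}[\tfrac1l]$-algebra $R$, and its $l$-fold sum in $W(R)$ is $(1-t)^1=1-t=1_{W(R)}$, so $l$ is a unit in $W(R)$ and, by truncation, in every $W_m(R)$. Therefore $l$ acts invertibly on each $W_m(R)$-module, hence on $NK_n(R)$, so $NK(R)/l=0$ and $K(-)/l$ is $\mathbb{A}^1$-invariant on $\mathbb{Z}[\tfrac1l]$-schemes. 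The only place where real work is hidden is the Witt module structure and its continuity; both halves of the theorem are then immediate from the two displayed facts about $W_m(\mathbb{F}_p)$ and about $l\in W(\mathbb{Z}[\tfrac1l])$.
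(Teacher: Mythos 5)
Your proposal is correct and follows essentially the same strategy as the paper (which itself generalizes Weibel's original argument): reduce $\mathbb{A}^1$-invariance to the vanishing of $NK$, invoke the continuous big-Witt-vector module structure with Frobenius, Verschiebung and the projection formula, and conclude from $p$-power torsion of the truncated Witt rings in characteristic $p$, resp.\ invertibility of $l$ in $\mathbb{W}(R)$, the latter being exactly Lemma~\ref{lem:invert_witt}. The only minor differences are that you cite the classical $W(R)$-module structure on $NK_*$ (the very ingredient the paper reconstructs categorically so as to cover all finitary localizing invariants) and that in characteristic $p$ you use that $\mathbb{W}_m(\mathbb{F}_p)$ is a finite $p$-group rather than the identity $V_{p^l}F_{p^l}=p^l$ of Lemma~\ref{lem:verfrob}.
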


Recently, Elmanto and Morrow have constructed a non-$\mathbb{A}^1$-invariant version of motivic 
cohomology over schemes of characteristic $p$ (see \cite{MotivicCohomology}). 
%It is composed out of the usual (i.e. Spitzweck's) $\mathbb{A}^1$-invariant motivic cohomology and the so-called 
%p-adic syntomic cohomology defined by Bhatt, Morrow, and Scholze. 
It is a well-behaved extension of the usual (meaning the one defined in \cite{SpitzweckHZ}) 
$\mathbb{A}^1$-invariant 
motivic cohomology to non-regular schemes. 
The two theories coincide after inverting $p$, so the direct analogue of the above theorem  holds. 
It follows from \cite[Theorem~4.22]{annala2021algebraic} that the {\it homological} version of the derived 
algebraic cobordism is also $\mathbb{A}^1$-invariant away from the characteristic. 
So, it is not unreasonable to expect a similar theorem to hold even for a larger class of
non-$\mathbb{A}^1$-invariant motivic theories. 
We do not know whether this is true for pbf-local $\mathbb{P}^1$-spectra of 
Annala-Iwasa under some assumptions, or for the derived algebraic cobordism. However, we prove that 
this is always true for all finitary localizing invariants:

\begin{thm}[Theorem~\ref{thm:main_result_proof}]\label{thm:main_result}
Let $k$ be a connective ring spectrum with $\pi_0(k)$ of characteristic $p$ and $E$ a $k$-localizing invariant 
which is finitary. 
Then 
$E(-)[\frac{1}{p}]$ is $\mathbb{A}^1$-invariant.
\end{thm}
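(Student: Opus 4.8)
The plan is to reduce the statement to a computation about $\mathbb{A}^1$, i.e.\ about the polynomial ring $k[t]$, using the machinery of Frobenius and Verschiebung endofunctors on $k$-linear stable $\infty$-categories that the paper develops after Tabuada. The key structural input is that, for a finitary localizing invariant $E$, the cofiber of $E(k) \to E(k[t])$ — equivalently the ``$NE$'' term, the reduced $E$-theory of the affine line — carries an action of the big Witt vectors $W(\pi_0 k)$, or at least of the Frobenius operators $F_n$ and Verschiebung operators $V_n$, coming from the endofunctors $t \mapsto t^n$ and the transfer along $k[t] \hookrightarrow k[t]$ (inclusion as $n$-th powers). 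Stienstra's projection formula, which the abstract promises an easy proof of, says $F_m V_n = \gcd(m,n)\cdot V_{n/d} F_{m/d}$ (with $d = \gcd$), and in particular $F_p V_p = p$ on the nose. This is the crucial identity.

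First I would set up the reduced theory: write $\widetilde{E}(R) = \mathrm{cofib}(E(k)\to E(R))$ for a $k$-algebra $R$, so that $\mathbb{A}^1$-invariance of $E[\tfrac1p]$ is equivalent to $\widetilde{E}(k[t])[\tfrac1p]\simeq 0$, and then more generally (by a standard induction on the number of variables and a colimit argument, using that $E$ is finitary and localizing, hence satisfies good descent) it suffices to treat the one-variable case over an arbitrary connective $k$ with $\pi_0 k$ of characteristic $p$. Next I would recall the construction of $F_p$ and $V_p$ on $\mathrm{Perf}(k[t])$ relative to $\mathrm{Perf}(k)$: $V_p$ is restriction of scalars along $k[t]\to k[t]$, $t\mapsto t^p$, and $F_p$ is the corresponding base change / induction; both are exact and preserve the relevant subcategories, so they induce endomorphisms of $\widetilde{E}(k[t])$. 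Crucially, in characteristic $p$ the ring map $t\mapsto t^p$ is the relative Frobenius, so $V_p F_p$ and $F_p V_p$ can be identified explicitly. Here is where Stienstra's projection formula enters: it gives $F_p V_p = p\cdot\mathrm{id}$ on $\widetilde E(k[t])$ — because $k[t]$ is free of rank $p$ over itself via $t\mapsto t^p$, the composite transfer-then-restrict is multiplication by the class of the module, and in characteristic $p$ that class is $p$.

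The endgame is then a nilpotence/eventual-vanishing argument. On $\widetilde E(k[t])$ one shows that $V_p$ (or $F_p$) is \emph{locally nilpotent} after a colimit: the point is that $\widetilde E(k[t]) = \mathrm{colim}$ over the $V_p$-tower (or its transpose), and because $E$ is finitary the reduced $E$-theory of $k[t]$ is built as a filtered colimit along maps that $F_p$ shifts, so $F_p$ acts \emph{invertibly} on the colimit while also, by the projection formula, $F_p V_p = p$. Combining ``$F_p$ invertible'' with ``$F_p V_p = p\cdot\mathrm{id}$'' forces $V_p$ to become the multiplication-by-$p$-divided-by-a-unit map; but $V_p$ is also topologically nilpotent on $\widetilde E(k[t])$ in the appropriate sense (this is the genuinely Weibel-style input: $t^{p^n}$-powers exhaust and the transfers telescope), so on the colimit $p$ acts invertibly \emph{and} nilpotently, whence $\widetilde E(k[t])[\tfrac1p] = 0$. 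Concretely I would phrase it as: the identity of $\widetilde E(k[t])[\tfrac1p]$ factors through $V_p$, hence through $V_p^n$ for all $n$, hence through $\lim$ or $\mathrm{colim}$ of the $V_p$-tower which vanishes by finitariness; equivalently $\mathrm{id} = \tfrac1p F_p V_p$ is divisible by arbitrarily high powers of a map that goes to zero.

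The main obstacle I anticipate is making the interplay between $F_p$, $V_p$ and the colimit presentation of $\widetilde E(k[t])$ precise — i.e.\ correctly identifying which of $F_p, V_p$ acts invertibly on the relevant colimit and verifying the topological nilpotence of the other — and, relatedly, checking that the Frobenius/Verschiebung endofunctors genuinely restrict to the subcategories whose $K$-theory (or $E$-theory) computes $\widetilde E$, so that the projection formula $F_p V_p = p$ can be applied verbatim on $\widetilde E(k[t])$ rather than only on $E(k[t])$. Getting the characteristic-$p$ hypothesis to feed in at exactly the point where the rank-$p$ module class degenerates to $p$ is the conceptual heart, and everything else — the reduction to one variable, the finitariness, the passage to $[\tfrac1p]$ — is bookkeeping that the cited Nisnevich/pro-cdh descent results and the finitary hypothesis should handle cleanly.
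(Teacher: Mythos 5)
Your proposal assembles the right cast of characters (Frobenius and Verschiebung along $t\mapsto t^{p^l}$, a projection formula, finitariness), but the identity you single out as the crux is the wrong one, and this is fatal. The composite you compute via the projection formula, transfer against a free module of rank $p$, gives $F_pV_p=p$ \emph{over any base}: the class of a finite free module of rank $p$ is $p$ in every characteristic, so "in characteristic $p$ that class is $p$" is not where the hypothesis enters, and an argument resting on this identity would equally ``prove'' that $NE(-)[\tfrac1p]$ vanishes over $\mathbb{Q}$, which is false (already $NK$ of a singular $\mathbb{Q}$-algebra is a nonzero $\mathbb{Q}$-vector space). What the paper actually uses is the opposite composite $V_{p^l}F_{p^l}=p^l$ on $\mathbb{W}(\pi_0 k)$, which holds \emph{only} over $\mathbb{F}_p$-algebras because it needs $(1-at)^{p^l}=1-a^{p^l}t^{p^l}$, i.e. additivity of the $p$-power map. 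Moreover these operators do not act on $\widetilde E(k[t])$ out of thin air: the paper first identifies $\Omega NE(\mathcal{C})\simeq E(\mathcal{N}il_0(\mathcal{C}))$ and produces a $\mathbb{W}_0(\pi_0 k)$-action on its homotopy groups from the tensoring of $\mathcal{E}nd(\mathbf{Perf}_k)$ on $\mathcal{N}il(\mathcal{C})$ plus additivity, with the projection formula serving to identify multiplication by $p^l$ with the endofunctor $\mathcal{V}_{p^l}\circ\mathcal{F}_{p^l}$. Finally, $\mathbb{A}^1$-invariance of $E[\tfrac1p]$ means $NE(\mathcal{C})[\tfrac1p]=0$ for \emph{every} $\mathcal{C}$ (every scheme), and this cannot be reduced to the single vanishing $\widetilde E(k[t])[\tfrac1p]=0$ by induction on variables; the whole argument must be run with an arbitrary coefficient category, which your reduction step elides.

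The endgame is also not viable as sketched. There is no presentation of $\widetilde E(k[t])$ as a colimit along a $V_p$- or $F_p$-tower, no reason for $F_p$ to act invertibly on it, and if $p$ acted nilpotently you would conclude $\widetilde E(k[t])=0$ outright, which is false: $NE$ is generally nonzero, and the correct statement is only that each homotopy class is killed by some power of $p$ depending on the class. Finitariness enters quite differently: one filters by nilpotence degree, $\mathcal{N}il(\mathcal{C})\simeq\colim_n\mathcal{N}il^{\le n}(\mathcal{C})$ (modules over $k[t]/(t^n)$ perfect over $k$), so every class in $\pi_iE(\mathcal{N}il_0(\mathcal{C}))$ lifts to some $\mathcal{N}il^{\le p^l}(\mathcal{C})$; the Frobenius $\mathcal{F}_{p^l}$ — which is restriction of scalars along $t\mapsto t^{p^l}$, note your Frobenius/Verschiebung conventions are swapped relative to the endomorphism-category picture — sends such a class into the image of the trivial-endomorphism functor $\iota_{\mathrm{nil}}$, and the Verschiebung preserves that image because $\mathcal{V}_i\circ\iota_{\mathrm{nil}}\simeq i\cdot\iota_{\mathrm{nil}}$ by additivity applied to the filtration of $k[t]/(t^i)$. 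Combining this with $V_{p^l}F_{p^l}=p^l$ shows $p^l x=0$ in $\pi_iE(\mathcal{N}il_0(\mathcal{C}))$, whence $NE(\mathcal{C})[\tfrac1p]=0$. The degree filtration, the interaction of Frobenius with it, and the behaviour of Verschiebung on trivial endomorphisms are precisely the ingredients your proposal is missing, and they cannot be replaced by the ``invertible plus topologically nilpotent'' heuristic.
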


We also generalize the other part of Weibel's result:

\begin{thm}[Theorem~\ref{thm:main_result_2_proof}]\label{thm:main_result_2}
Let $k$ be a connective $\mathbb{S}[\frac{1}{l}]$-algebra spectrum and $E$ a finitary $k$-localizing 
invariant. 
Then 
$E(-)/l$ is  $\mathbb{A}^1$-invariant.
\end{thm}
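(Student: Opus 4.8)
The plan is to reduce to the case where $l=p$ is prime and then run the argument behind \thmref{thm:main_result}, with $p$-divisibility playing the role that $p$-torsion plays there. The splitting $\mathbb{S}/l\simeq\bigoplus_{p\mid l}\mathbb{S}/p^{v_p(l)}$ of the mod-$l$ Moore spectrum yields $X/l\simeq\bigoplus_{p\mid l}X/p^{v_p(l)}$ for every spectrum $X$, and $X/p^{a}\simeq 0$ if and only if $X/p\simeq 0$, i.e.\ if and only if multiplication by $p$ is an equivalence on $X$. Since a connective $\mathbb{S}[\tfrac1l]$-algebra is in particular a connective $\mathbb{S}[\tfrac1p]$-algebra for each prime $p\mid l$, it suffices to treat the case $l=p$ prime; assume this. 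Write $\sC[t]:=\sC\otimes_k\mathrm{Perf}(k[t])$ for the polynomial extension of a small idempotent-complete $k$-linear stable $\infty$-category $\sC$, and set
\[
NE(\sC):=\Fib\bigl(E(\sC[t])\to E(\sC)\bigr)\simeq\Cofib\bigl(E(\sC)\to E(\sC[t])\bigr),
\]
where the first map is induced by $t\mapsto 0$. The resulting split fibre sequence $E(\sC)\to E(\sC[t])\to E(\sC)$ shows that $\mathbb{A}^1$-invariance of $E(-)/p$ --- i.e.\ that $E(\sC)/p\to E(\sC[t])/p$ is an equivalence for all $\sC$ --- is equivalent to the vanishing of $NE(-)/p$; equivalently, we must prove that multiplication by $p$ is an equivalence on $NE(\sC)$ for every such $\sC$.

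Next I would invoke the operators from the earlier sections: the Verschiebung $V_p$ and the Frobenius $F_p$ of Tabuada --- roughly, restriction of scalars along, respectively transfer along, the finite locally free degree-$p$ morphism $\sC[t]\to\sC[t]$ classified by $t\mapsto t^p$ --- both acting on $NE(\sC)$. By Stienstra's projection formula one has $F_p\circ V_p\simeq p\cdot\id_{NE(\sC)}$ (indeed $\mathrm{tr}\circ\mathrm{res}=\mathrm{tr}(1)\cdot(-)$ and $\mathrm{tr}(1)=p$), and, more, the operators $\{V_n,F_n\}$ together with the Teichmüller twists $\langle a\rangle$ ($a\in\pi_0 k$) assemble into an action of the completed big Witt vectors $\widehat{W}(\pi_0 k)$ on $NE(\sC)$ which is complete for the Verschiebung (i.e.\ $t$-adic) filtration. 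Finitariness of $E$ is used here, exactly as in the proof of \thmref{thm:main_result}. It is precisely the hypothesis that $k$ is an $\mathbb{S}[\tfrac1p]$-algebra --- and not merely the sphere --- that makes $\widehat{W}(\pi_0 k)$, rather than $\widehat{W}(\mathbb{Z})$, the ring at play.

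It then remains to observe that $p$ is a \emph{unit} of $\widehat{W}(\mathbb{Z}[\tfrac1p])$. The ghost map $\widehat{W}(\mathbb{Z}[\tfrac1p])\hookrightarrow\prod_{n\ge 1}\mathbb{Z}[\tfrac1p]$ is injective, the Witt congruences cutting out its image become vacuous once $p$ is inverted, and there $p$ is the manifestly invertible tuple $(p,p,\dots)$; explicitly, the binomial power series $(1-t)^{1/p}\in 1+t\,\mathbb{Z}[\tfrac1p][[t]]$ is a two-sided inverse of $p$. Since $\pi_0 k$ is a $\mathbb{Z}[\tfrac1p]$-algebra, $p$ remains a unit of $\widehat{W}(\pi_0 k)$, so multiplication by $p$ --- being the action of this unit --- is an equivalence on the complete $\widehat{W}(\pi_0 k)$-module $NE(\sC)$. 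Hence $NE(\sC)/p\simeq 0$ for every $\sC$, i.e.\ $E(-)/p$ is $\mathbb{A}^1$-invariant, as desired. (If one prefers to avoid the module structure wholesale: writing $(1-t)^{1/p}=\prod_{n\ge 1}(1-c_n t^n)$ in Witt coordinates with $c_n\in\mathbb{Z}[\tfrac1p]$, the operator $\sum_{n\ge 1}V_n\circ\langle c_n\rangle$ --- convergent by completeness of $NE(\sC)$ --- is a two-sided inverse of multiplication by $p$.)

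\textbf{Main obstacle.} The Witt-vector bookkeeping above is elementary; what really requires work --- and the reason one cannot read off $p$-invertibility from the single relation $F_p\circ V_p\simeq p$ alone --- is promoting the individual operators $V_p$, $F_p$ and Stienstra's formula to a coherent action of the \emph{completed} Witt vectors, and verifying that $NE(\sC)$ is complete for the Verschiebung filtration, so that the infinite sum (equivalently, Witt product) representing $(1-t)^{1/p}$ converges when applied to $NE(\sC)$. I expect this to be supplied by the same machinery already set up for \thmref{thm:main_result}; granting it, the remainder is formal.
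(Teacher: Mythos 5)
Your proposal is correct and takes essentially the same route as the paper: $\mathbb{A}^1$-invariance of $E/l$ is reduced to invertibility of $l$ on $\pi_*NE(\sC)\simeq\pi_{*-1}E(\mathcal{N}il_0(\sC))$, which carries an action of the Witt vectors built from $\mathcal{V}_n,\mathcal{F}_n$ and the projection formula, extended from $\mathbb{W}_0(\pi_0(k))$ to $\mathbb{W}(\pi_0(k))$ using the filtration $\mathcal{N}il^{\le n}(\sC)$ and finitariness, where $l$ is a unit precisely because $(1-t)^{1/l}$ exists over a $\mathbb{Z}[\frac{1}{l}]$-algebra (Lemma~\ref{lem:invert_witt}). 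The only cosmetic differences are that the paper works with the action on homotopy groups and needs only continuity of that action (every element lies in some $\mbf{N}_n$ killed by the deep Verschiebung ideal, Lemma~\ref{lem:cont_action}) rather than completeness, and it handles arbitrary $l$ directly without reducing to primes; also your justification of $F_p\circ V_p\simeq p$ via ``$\mathrm{tr}(1)=p$'' conflates it with the projection formula, but as you note that relation is not what carries the argument.
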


In particular, this implies that Theorem~\ref{thm:weibelKA1} holds for all spectral schemes. 
In the context of dg-categories for $E$ being the functor of nonconnective K-theory these theorems 
have been proved in \cite{Tabuada_Kleinian}. 
We note that the assumption of being finitary is necessary: for example, it follows from 
\cite[Theorem~3.1.1]{THH_TC_A1} that $TC\otimes \mathbb{Q}(-)$ is not $\mathbb{A}^1$-invariant. However, the 
author expects that Theorem~\ref{thm:main_result_2} holds more generally.

\ssec{Outline of the proof}

We follow the same steps as the original Weibel's proof \cite{weibel-nk}. 
Given a $k$-localizing invariant define
\[
NE(\sC) := \Fib (E(\sC \otimes_k \mbf{Perf}_{k[t]}) \to E(\sC)).
\]
Similar to how it is done in \cite{Tabuada_Kleinian}, we establish an action of {\it the ring of rational 
Witt vectors} $\mathbb{W}_0(k)$ on the homotopy groups of $NE(\sC)$. The main ingredients going into 
defining this action are Almkvist's isomorphism \cite{ALMKVIST1974375} 
\[
\mathbb{Z} \oplus \mathbb{W}_0(k) \simeq K_0(\mathcal{E}nd(\mbf{Perf}_k))
\]
and an explicit isomorphism 
\[
\pi_iE(\sC) \oplus \pi_{i+1}NE(\sC) \simeq \pi_iE(\mathcal{N}il(\sC)).
\]
Here
$\mathcal{E}nd(\sC)$ is the stable $\infty$-category of endomorphisms of $\sC$ 
and $\mathcal{N}il(\sC)$ is the 
subcategory of nilpotent endomorphisms. 
Having these, the action is induced by the tensor product functor 
\[
\mathcal{E}nd(\mbf{Perf}_k) \times \mathcal{N}il(\sC) \to \mathcal{N}il(\sC).
\]
%This action is cooked out of the following ingredients: 
%\begin{enumerate}
%\item Almkvist's isomorphism $\mathbb{Z} \oplus \mathbb{W}_0(k) \simeq K_0(\mathcal{E}nd(\mbf{Perf}_k))$;
%\item an isomorphism $\pi_iE(\sC) \oplus \pi_{i+1}NE(\sC) \simeq \pi_iE(\mathcal{N}il(\sC))$;
%\item an explicit action of the $\infty$-category $\mathcal{E}nd(\mbf{Perf}_k)$ on the $\infty$-category 
%$\mathcal{N}il(\sC)$.
%\end{enumerate}
%The first result has already been 
%available thanks to \cite{ALMKVIST1974375} and \cite{blumberg2015witt}. The equivalence (2) and 
%the action (3)  
%are rather elementary consequences of the definitions. 
Our main results then follow from a thorough analysis of the action, granted by the following steps: 
%This analysis is granted by the two additional constructions: 
\begin{enumerate}
\item
There are Frobenius and Verschiebung endofunctors 
$\mathcal{F}_n, \mathcal{V}_n$ on $\mathcal{N}il(\sC)$ as well as $\mathcal{E}nd(\sC)$ 
(see \cite[Section~5]{blumberg2015witt}). We provide a different way of defining them, which 
allows to show that they are nicely compatible with the ring structure of $\mathbb{W}_0(k)$ and 
satisfy the projection formula with respect to the action of $\mathbb{W}_0(k)$ on $NE(\sC)$ (see Lemma~\ref{lem:proj_formula}). For $\sC=\mbf{Perf}_k$ 
they induce the classical Frobenius and Verschiebung operations on 
$K_0(\mathcal{E}nd(\mbf{Perf}_k)) \simeq \mathbb{Z} \oplus \mathbb{W}(\pi_0(k))$. 
This part allows us to represent  
the action of $p\in \mathbb{W}_0(k)$ on $NE(\sC)$ by an explicit endofunctor of $\mathcal{N}il(\sC)$. 

\item
The action of $\mathbb{W}_0(k)$ on the (discrete) abelian group $\pi_iE(\mathcal{N}il(\sC))$ is continuous. 
For $E=K$ this follows from the construction of K-theory. In general, we introduce a filtration 
\[
\cdots \to \mathcal{N}il^{\le n}(\sC) \to \mathcal{N}il^{\le n+1}(\sC) \to \cdots 
\]
of $\mathcal{N}il(\sC)$ such that 
\[
\colim\limits_n \mathcal{N}il^{\le n}(\sC) \simeq \mathcal{N}il(\sC)
\]
and so that the action is continuous on the images of 
all $\pi_iE(\mathcal{N}il^{\le n}(\sC))$ by construction (see (\ref{ssec:filtration}) and (\ref{sssec:filtration})). 
This allows us to extend the action of $\mathbb{W}_0(k)$ to an action of the full ring of Witt vectors 
$\mathbb{W}(k)$ and 
deduce the main results from its properties. % of the ring $\mathbb{W}(k)$.
\end{enumerate}
The constructions in each of the steps are interesting on their own and appear to be completely new.

\ssec{Structure of the paper}
In Section~\ref{section:locinv} we remind the reader all they need to know about localizing invariants. 
In Section~\ref{section:endcats} we introduce the $\infty$-categories of endomorphisms and of 
nilpotent endomorphisms. In Section~\ref{section:nilend_a1inv} we study 
nil-theories $NE(-)$ that measure the failure of $\mathbb{A}^1$-invariance of $E$.  
We show that $NE(\sC)$ can be recovered from the values of $E$ on the 
$\infty$-category of nilpotent endomorphisms in $\sC$ for any 
localizing invariant $E$. In Section~\ref{section:characteristic polynomial} we 
recall the classical results about the ring of big Witt vectors. We then study the Frobenius and 
Verschiebung endofunctors of the $\infty$-categories of endomorphisms and relate them to the classical 
Frobenius and Verschiebung operations on the ring of Witt vectors. Finally, in 
Section~\ref{section:main_result} 
we prove the main result.

%\section*{Introduction}

\ssec*{Notations \& Conventions} 
\begin{itemize}
\item 
Throughout the paper we fix a connective commutative ring spectrum $k$. 
We denote by $k[t]$ the ring spectrum $k \otimes \Sigma^\infty_+ \mathbb{N}$. Compatibly, 
we will denote by $\mathbb{A}^1_k$ and $\mathbb{P}^1_k$ the {\it flat} affine line and the {\it flat} 
projective line.

\item 
All spectral $k$-schemes are assumed quasi-compact, quasi-separated and build out of connective $\mathbb{E}_\infty$-algebras. 
They form an $\infty$-category 
$\mrm{Sch}_k$.

\item 
Let $X$ be a spectral scheme and $Y$ be a closed subscheme of $X$. We denote by $\mbf{Qcoh}_X$ the 
$\infty$-category of quasi-coherent sheaves on $X$ and by $\mbf{Qcoh}_{Y,X}$ the subcategory of 
sheaves supported on $Y$. 
We denote by $\mbf{Perf}_X$ the subcategory of perfect complexes over $X$. 

\item 
For an $\mathbb{E}_\infty$-ring $R$ we denote by $\mbf{Mod}_R$ the $\infty$-category of $R$-modules in 
spectra, $\mbf{Perf}_R$ is the subcategory of perfect complexes. 

\item
For an $\infty$-category $\sC$ we denote by $\sC^{\simeq}$ its maximal groupoid and by $\sC^\omega$ 
the subcategory of compact objects.

\item
For $\infty$-categories $\sC,\sD$, $\mrm{Fun}(\sC,\sD)$ is the $\infty$-category of functors between them. If $\sC$ and $\sD$ admit finite colimits $\mrm{Fun}^{\mrm{ex}}(\sC,\sD)$ is the subcategory of those functors 
that preserve finite colimits. 
If $\sC$,$\sD$ are also $k$-linear for some commutative ring spectrum $k$, we denote by 
$\mrm{Fun}^{\mrm{ex}}_{k}(\sC,\sD)$ the $\infty$-category of $k$-linear functors that preserve finite colimits. 

\item 
Given a small stable $\infty$-category $\sC$ we use the following notation for the 
colimit-completion: 
\[
\widehat{\sC} := \mrm{Fun}^{\mrm{ex}}(\sC^{\mrm{op}}, \Spt).
\]
This is a stable (presentable) $\infty$-category and it admits a fully faithful Yoneda embedding $\sC \to \widehat{\sC}$.

\item 
For a stable $\infty$-category $\sC$ and objects $x, y \in \sC$ we denote by 
\[
\mrm{maps}_\sC(x,y)
\]
the mapping spectrum in $\sC$, while we keep the notation
\[
\Maps_\sC(x,y)
\]
for the mapping space.

\item
Given an $\mathbb{E}_1$-ring $R$, $\mbf{Mod}_R$ is the $\infty$-category of {\it right} $R$-modules, $\mbf{Perf}_R$ is the subcategory of perfect complexes.

\item 
We denote the $\infty$-category of small $k$-linear idempotent complete 
stable $\infty$-categories by $\mrm{Cat}^{\mrm{perf}}_{\infty,k}$. %When $k=\mathbb{S}$ we omit $k$ from the notation.
\end{itemize}

\ssec*{Acknowledgements}
The results of the paper emerged after several busy conversations with Ryomei Iwasa, Marc Hoyois and Niklas 
Kipp. The author thanks them for sharing their ideas, wishes and insights. 
Toni Annala pointed out to the author that the Frobenius and Verschiebung endofunctors 
$\mathcal{F}_n$, $\mathcal{V}_n$ are induced by the 
pullback and pushforward functors along the maps $k[t] \stackrel{t\mapsto t^n}\longrightarrow k[t]$. 
The author also wants to thank Denis-Charles Cisinski for a discussion about non-finitary 
localizing invariants, Alexander Efimov for sharing his wisdom about localizing motives and 
the anonymous referee for their valuable suggestions. 
Finally, I want to thank Maxime Ramzi for pointing out a mistake in 
Lemma~\ref{lem:proj_formula} and many incredibly useful discussions about questions related to the paper. 

The author gladly acknowledges that he is a member of the SFB 1085 ``Higher Invariants''  
funded by the Deutsche Forschungsgesellschaft (DFG).

\section{Reminder on noncommutative motives and localizing invariants}\label{section:locinv}

In this section we recall some basic notions and facts from the theory of localizing invariants. 
We only discuss those results that are relevant for this paper. 
For a general theory of localizing invariants we refer to \cite{blumberg2013universal}. We start from 
basic definitions:

\begin{defn}
We say that a diagram in $\mrm{Cat}^{\mrm{perf}}_{\infty,k}$
\[
\sA \stackrel{f}\to \sB \stackrel{g}\to \sC
\]
is an {\bf exact sequence} of small $k$-linear idempotent complete stable $\infty$-categories if 
$f$ is fully faithful, the composite $g\circ f$ is trivial and the induced functor 
\[
\sB/\sA \to \sC
\]
is an equivalence up to idempotent completion.
\end{defn}

\begin{defn}
Let $\sE$ be a stable $\infty$-category. A functor 
\[
E: \mrm{Cat}^{\mrm{perf}}_{\infty,k} \to \sE
\]
is said to be a $k$-{\bf localizing invariant} if 
for any exact sequence $\sA \to \sB \to \sC$ of $k$-linear small idempotent complete stable 
$\infty$-categories the sequence
\[
E(\sA) \to E(\sB) \to E(\sC)
\]
is a fiber sequence.
%\item for any $\sA \in \mrm{Cat}^{\mrm{perf}}_{\infty,k}$ 
%\[
%E(\sA) \simeq E(\sA^{\mrm{ic}})
%\]
%where $\sA^{\mrm{ic}}$ denotes the idempotent completion of $\sA$.
\end{defn}

\sssec{}
Any $k$-localizing invariant gives rise to a cohomology theory on all $k$-schemes via assigment
\[
X \mapsto E(\mbf{Perf}_X)
\]
for any $X \in \mrm{Sch}_k.$
We abuse the notation and write 
\[
E(X) := E(\mbf{Perf}_X).
\]
We also write 
\[
E(R) := E(\mrm{Spec}(R)) 
\]
when $R$ is a $k$-algebra.

\sssec{}\label{rem:additivity}
One of the most useful elementary properties of localizing invariants 
is the following result, sometimes called the additivity theorem. 
For a localizing invariant $E$ and a fiber sequence 
\[
f \to g \to h
\]
in $\mrm{Fun}^{\mrm{ex}}_k(\sC, \sD)$ for any small $k$-linear idempotent complete 
stable $\infty$-categories $\sC,\sD$ one has an equivalence of maps of spectra
\[
E(g) \simeq E(f) + E(h) : E(\sC) \to E(\sD).
\]

\begin{exam}
K-theory, topological Hochschild homology, topological Cyclic homology, TR, Blanc's topological K-theory, Selmer K-theory are $k$-localizing invariants for any $k$. 
Homotopy K-theory is a $k$-localizing invariant if $k$ is a $\mathbb{Z}$-algebra.
\end{exam}

\begin{prop}\label{prop:pbf}
For any spectral scheme $X$ the pullback along the projection $\mathbb{P}^1_X \stackrel{p}\to X$ and 
the pushforward along any section 
$X\stackrel{i}\to \mathbb{P}^1_X$
induce an equivalence 
\[
E(\mbf{Perf}_X) \oplus E(\mbf{Perf}_X) \stackrel{\begin{pmatrix}i_* & p^* \end{pmatrix}}\simeq 
E(\mbf{Perf}_{\mathbb{P}^1_X})
\]
\end{prop}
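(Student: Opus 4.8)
The plan is to use the relative Beilinson semiorthogonal decomposition of $\mbf{Perf}_{\mathbb{P}^1_X}$ together with the localizing property of $E$ and the additivity theorem (see \ref{rem:additivity}).

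\emph{Step 1 (the semiorthogonal decomposition).} Write $p\colon \mathbb{P}^1_X \to X$ for the projection and fix a section $i\colon X \hookrightarrow \mathbb{P}^1_X$. It is a relative effective Cartier divisor with ideal sheaf a line bundle $\mathcal{L} := \mathcal{O}_{\mathbb{P}^1_X}(-i(X))$, and the Koszul resolution of $i_*\mathcal{O}_X$ is a fiber sequence of perfect complexes
\[
\mathcal{L} \to \mathcal{O}_{\mathbb{P}^1_X} \to i_*\mathcal{O}_X
\]
on $\mathbb{P}^1_X$. Since $\mathcal{L}$ restricts fibrewise to $\mathcal{O}(-1)$, we have $p_*\mathcal{L} = 0$ and $p_*\mathcal{O}_{\mathbb{P}^1_X} = \mathcal{O}_X$ over an arbitrary, possibly spectral, base. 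It follows that the $k$-linear exact functors
\[
\Phi := \mathcal{L}\otimes p^*(-), \qquad p^* \colon \mbf{Perf}_X \to \mbf{Perf}_{\mathbb{P}^1_X}
\]
are fully faithful, that their essential images $\mathcal{A},\mathcal{B}$ satisfy $\mathrm{maps}(\mathcal{B},\mathcal{A}) = 0$, and that together they generate $\mbf{Perf}_{\mathbb{P}^1_X}$; in other words, $\mbf{Perf}_{\mathbb{P}^1_X} = \langle \mathcal{A},\mathcal{B}\rangle$ with both semiorthogonal pieces equivalent to $\mbf{Perf}_X$.

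\emph{Step 2 (apply $E$).} As $\mathcal{A}$ is admissible, the decomposition produces an exact sequence $\mathcal{A} \hookrightarrow \mbf{Perf}_{\mathbb{P}^1_X} \to \mbf{Perf}_{\mathbb{P}^1_X}/\mathcal{A}$ in $\mrm{Cat}^{\mrm{perf}}_{\infty,k}$, and the quotient is identified with $\mbf{Perf}_X$ so that $p^*$ is a section of the quotient functor. Applying $E$ gives a fiber sequence $E(\mathcal{A}) \to E(\mbf{Perf}_{\mathbb{P}^1_X}) \to E(\mbf{Perf}_X)$ split by $E(p^*)$; identifying $E(\mathcal{A})$ with $E(\mbf{Perf}_X)$ via $\Phi$ we obtain an equivalence
\[
\begin{pmatrix} E(\Phi) & E(p^*)\end{pmatrix} \colon E(\mbf{Perf}_X) \oplus E(\mbf{Perf}_X) \isoto E(\mbf{Perf}_{\mathbb{P}^1_X}).
\]

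\emph{Step 3 (trading $\Phi$ for $i_*$).} The projection formula and $i^*p^* \simeq \mathrm{id}$ give $i_*(-) \simeq i_*\mathcal{O}_X \otimes p^*(-)$, so $i_*$ is indeed an exact $k$-linear functor $\mbf{Perf}_X \to \mbf{Perf}_{\mathbb{P}^1_X}$ (note $i_*\mathcal{O}_X$ is perfect by Step 1), and tensoring the Koszul fiber sequence with $p^*(-)$ yields a fiber sequence of such functors
\[
\Phi \to p^* \to i_* .
\]
By additivity (see \ref{rem:additivity}) this gives $E(i_*) \simeq E(p^*) - E(\Phi)$ as maps $E(\mbf{Perf}_X) \to E(\mbf{Perf}_{\mathbb{P}^1_X})$, whence
\[
\begin{pmatrix} E(i_*) & E(p^*)\end{pmatrix} = \begin{pmatrix} E(\Phi) & E(p^*)\end{pmatrix} \circ \begin{pmatrix} -1 & 0 \\ 1 & 1 \end{pmatrix}.
\]
The integral matrix on the right is invertible, so the left-hand map is an equivalence as well, which is the assertion. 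The only nonformal input is Step 1 — in particular that $\mathcal{O}_{\mathbb{P}^1_X}$ and $\mathcal{L}$ generate $\mbf{Perf}_{\mathbb{P}^1_X}$ over a general quasi-compact quasi-separated (spectral) base — which can be extracted from the Koszul resolution of the diagonal of $\mathbb{P}^1$, defined already over the sphere spectrum; everything after that is a formal consequence of the localization property and the additivity theorem.
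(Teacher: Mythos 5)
Your proof is correct, but it takes a different route from the paper: the paper disposes of this proposition in one line by citing Khan's projective bundle theorem ({\cite[Theorem~B]{khan2018algebraic}}), whereas you reprove the relevant special case from scratch via the relative Beilinson semiorthogonal decomposition $\mbf{Perf}_{\mathbb{P}^1_X}=\langle \mathcal{L}\otimes p^*\mbf{Perf}_X,\ p^*\mbf{Perf}_X\rangle$, the localization property of $E$, and additivity to trade $\mathcal{L}\otimes p^*(-)$ for $i_*$ using the Koszul triangle $\mathcal{L}\to\mathcal{O}\to i_*\mathcal{O}_X$. The citation buys brevity and covers arbitrary projective bundles; your argument buys self-containedness and isolates exactly the nonformal input (generation by $\mathcal{O}$ and a twist, via the resolution of the diagonal, valid over the sphere). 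Two small points could be tightened. First, over a spectral base it is cleaner to obtain $p_*\mathcal{L}\simeq 0$ formally rather than by fibrewise vanishing plus base change: push forward the Koszul triangle and use $p_*\mathcal{O}_{\mathbb{P}^1_X}\simeq\mathcal{O}_X$ together with $p\circ i=\mathrm{id}$, so that $p_*\mathcal{L}$ is the fiber of an equivalence. Second, you implicitly identify the subcategory generated by $\mathcal{L}$-twists with the standard Beilinson piece generated by $\mathcal{O}(-1)$-twists; this is fine because the ideal of any section is $\mathcal{O}(-1)\otimes p^*\mathcal{N}$ for a line bundle $\mathcal{N}$ on $X$ (by the universal property of $\mathbb{P}^1$, also in the spectral setting), but it deserves a sentence. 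With these remarks your Steps 2 and 3 are exactly the formal localization-plus-additivity argument the paper's conventions support, including the invertible integer matrix trick converting $\begin{pmatrix} E(\Phi) & E(p^*)\end{pmatrix}$ into $\begin{pmatrix} E(i_*) & E(p^*)\end{pmatrix}$.
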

\begin{proof}
Follows from \cite[Theorem~7.2.2.1]{SAG} (see also \cite[Theorem~B]{khan2018algebraic}).
\end{proof}

\begin{exam}
Motivic cohomology of a fixed weight $m$ does not extend to a $k$-localizing invariant for any $k$ because 
there is no equivalence as in Proposition~\ref{prop:pbf}.
\end{exam}

\begin{prop}\label{prop:P1_excision}
Let $E$ be a $k$-localizing invariant $E$. 
Then the square of spectra
\[
\begin{tikzcd}
E(\mathbb{P}^1_k) \arrow[r]\arrow[d] & E(\mathbb{A}^1_k)\arrow[d] \\
E(\mathbb{A}^1_k) \arrow[r] & E(\mathbb{G}_{m,k})
\end{tikzcd}
\]
is cartesian.
\end{prop}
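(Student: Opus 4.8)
The statement to prove is that the square
\[
\begin{tikzcd}
E(\mathbb{P}^1_k) \arrow[r]\arrow[d] & E(\mathbb{A}^1_k)\arrow[d] \\
E(\mathbb{A}^1_k) \arrow[r] & E(\mathbb{G}_{m,k})
\end{tikzcd}
\]
is cartesian. The natural approach is to reduce to the projective bundle formula of \propref{prop:pbf} together with the localization (Verdier) sequence defining $E$. Cover $\mathbb{P}^1_k$ by the two standard opens $U_0 \cong \mathbb{A}^1_k$ and $U_\infty \cong \mathbb{A}^1_k$ with intersection $U_0 \cap U_\infty \cong \mathbb{G}_{m,k}$; the square above is precisely the resulting Mayer--Vietoris square, so the claim is a Nisnevich (indeed Zariski) descent statement for $E$. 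Since $E$ is a localizing invariant, Nisnevich descent for the functor $X \mapsto E(\mathbf{Perf}_X)$ is known (cited in the introduction as \cite[Corollary~4.1.2]{BKRS}), and the above is an instance of it; but I would prefer to give a self-contained argument using only \propref{prop:pbf}, to keep the section elementary.

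\emph{Self-contained argument.} First I would identify the fiber of $E(\mathbb{P}^1_k) \to E(\mathbb{A}^1_k)$ (pullback along $U_0 \hookrightarrow \mathbb{P}^1_k$) with the fiber of $E(\mathbb{A}^1_k) \to E(\mathbb{G}_{m,k})$ (pullback along $U_0 \cap U_\infty \hookrightarrow U_\infty$), which is exactly what "cartesian" means. The open immersion $j\colon \mathbb{A}^1_k = U_0 \hookrightarrow \mathbb{P}^1_k$ has closed complement the point $\infty \cong \operatorname{Spec} k$, and $\mathbf{Perf}$ of the complement sits in a Verdier sequence $\mathbf{Perf}_{\{\infty\}}(\mathbb{P}^1_k) \to \mathbf{Perf}_{\mathbb{P}^1_k} \to \mathbf{Perf}_{\mathbb{A}^1_k}$, where $\mathbf{Perf}_{\{\infty\}}(\mathbb{P}^1_k)$ is the category of perfect complexes supported at $\infty$. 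Applying $E$ gives a fiber sequence, so the fiber of $E(\mathbb{P}^1_k) \to E(\mathbb{A}^1_k)$ is $E(\mathbf{Perf}_{\{\infty\}}(\mathbb{P}^1_k))$. Likewise the fiber of $E(U_\infty) \to E(U_0 \cap U_\infty)$, i.e. $E(\mathbb{A}^1_k) \to E(\mathbb{G}_{m,k})$ where $\mathbb{G}_{m,k} \hookrightarrow U_\infty \cong \mathbb{A}^1_k$ is the open complement of the origin (which is the image of $\infty$), is $E(\mathbf{Perf}_{\{\infty\}}(U_\infty))$, the perfect complexes on $\mathbb{A}^1_k = U_\infty$ supported at the origin. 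So it remains to check that the pullback functor along $U_\infty \hookrightarrow \mathbb{P}^1_k$ induces an equivalence $\mathbf{Perf}_{\{\infty\}}(\mathbb{P}^1_k) \xrightarrow{\ \sim\ } \mathbf{Perf}_{\{\infty\}}(U_\infty)$ on categories supported at $\infty$; this is the standard fact that perfect complexes with support in a closed subscheme depend only on a Zariski (or Nisnevich) neighborhood of that closed subscheme — Zariski excision for $\mathbf{Perf}$ — which here amounts to $\mathbb{A}^1_k = U_\infty$ being such a neighborhood of $\infty$ inside $\mathbb{P}^1_k$. Compatibility of these two fiber sequences (that the induced map on fibers is the excision equivalence) is a diagram chase using functoriality of the Verdier quotient.

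\emph{Alternative, via} \propref{prop:pbf}. A second route, perhaps cleaner: by \propref{prop:pbf} applied with $X = \operatorname{Spec} k$ we have $E(\mathbb{P}^1_k) \simeq E(k) \oplus E(k)$, and the two maps to $E(\mathbb{A}^1_k)$ are computed by restricting the basis elements $i_*$ (pushforward along a section) and $p^*$ (pullback along the structure map) to $U_0$ and $U_\infty$. Since $E(\mathbb{A}^1_k) \simeq E(k)$ and $E(\mathbb{G}_{m,k}) \simeq E(k) \oplus E(k)\{t\}$ by the known computation for $\mathbb{G}_m$ (again a consequence of the localization sequence for $\mathbb{A}^1 \setminus 0 \subset \mathbb{A}^1$, or of \cite{khan2018algebraic}), one checks directly that the square becomes the evident cartesian square expressing $E(k)^{\oplus 2}$ as the pullback of $E(k) \to E(\mathbb{G}_{m,k}) \leftarrow E(k)$. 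This version trades geometric excision for an explicit computation of $E$ on $\mathbb{A}^1$ and $\mathbb{G}_m$.

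\emph{Main obstacle.} The essential content — and the step I expect to require the most care — is the excision equivalence $\mathbf{Perf}_{\{\infty\}}(\mathbb{P}^1_k) \simeq \mathbf{Perf}_{\{\infty\}}(U_\infty)$ and, relatedly, verifying that the two fiber sequences above are identified compatibly, i.e. that all the relevant squares of categories commute up to coherent homotopy before applying $E$. In the spectral (non-discrete $k$) setting this should still follow from \cite{khan2018algebraic} or from derived/spectral algebraic geometry versions of Thomason--Neeman localization, but citing the right form precisely is where the attention goes; everything else is formal manipulation of fiber sequences produced by the localizing property of $E$.
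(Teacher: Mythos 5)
Your primary argument is the paper's own proof: the square is the Mayer--Vietoris square for the standard Zariski cover of $\mathbb{P}^1_k$ by two copies of $\mathbb{A}^1_k$, and cartesianness follows from Nisnevich/Zariski descent (\'etale excision) for localizing invariants as in BKRS; your self-contained version, identifying both horizontal fibers with $E$ of perfect complexes supported at $\infty$ via Thomason--Neeman localization and excision for categories with support, is exactly what that descent statement amounts to in this case and is fine.

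One warning: the ``alternative via \propref{prop:pbf}'' should be dropped. There you assert $E(\mathbb{A}^1_k)\simeq E(k)$ and $E(\mathbb{G}_{m,k})\simeq E(k)\oplus E(k)\{t\}$, but these are instances of $\mathbb{A}^1$-invariance, which fails for general localizing invariants --- indeed quantifying that failure (the nonvanishing of $NE(\sC)$) is the subject of the paper. The correct general statements only express $E(\mathbb{A}^1_k)$ and $E(\mathbb{G}_{m,k})$ in terms of $E(k)$ together with Nil-terms (via \propref{prop:role_of_nil} and the localization sequence), so that route does not give a proof without circularly assuming homotopy invariance. The descent/excision argument needs no such input, so keep that one.
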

\begin{proof}
The case of $k$ being a simplicial commutative ring follows from \cite[Theorem~4.1.1]{BKRS}. 
The proof in general is the same, but we include it for the reader's convenience. 

The diagram in question is obtained via applying $E$ to the square
\[
\begin{tikzcd}
\mbf{Perf}_{\mathbb{P}^1_k} \arrow[r]\arrow[d] & \mbf{Perf}_{\mathbb{A}^1_k}\arrow[d] \\
\mbf{Perf}_{\mathbb{A}^1_k} \arrow[r] & \mbf{Perf}_{\mathbb{G}_{m,k}},
\end{tikzcd}
\]
so by \cite[Theorem~18]{tamme-excision} it suffices to check that the square is excisive in the sense of 
\cite[Definition~14]{tamme-excision}. 
By \cite[Theorem~7.2.3.1]{SAG} we have a morphism of localization sequences 
\[
\begin{tikzcd}
\mbf{Qcoh}_{0, \mathbb{P}^1_k}\arrow[d,"\simeq"] \arrow[r] & \mrm{Ind}(\mbf{Perf}_{\mathbb{P}^1_k}) \arrow[r]\arrow[d] & \mrm{Ind}(\mbf{Perf}_{\mathbb{A}^1_k})\arrow[d] \\
\mbf{Qcoh}_{0, \mathbb{A}^1_k}\arrow[r] & \mrm{Ind}(\mbf{Perf}_{\mathbb{A}^1_k}) \arrow[r] & \mrm{Ind}(\mbf{Perf}_{\mathbb{G}_{m,k}}).
\end{tikzcd}
\]
In particular, the right horizontal maps are localizations and the right square is pullback and the 
original square in question is excisive.
\end{proof}

\ssec{}\label{ssec:sym_mon} Recall that $\mrm{Cat}^{\mrm{perf}}_{\infty, k}$ is symmetric monoidal in such a way that 
$\sC \otimes_k \sD$ corepresents functors $\sC \times \sD$
that are $k$-linear and exact in each variable. 
One can also see that $- \otimes_k \sD$ left adjoint to $\mrm{Fun}^{\mrm{ex}}_k(\sD, -)$
(see \cite[Section~4.1]{HSS}, also \cite[Section~3.1]{blumberg2013universal}).
%(see also \cite[Theorem~4.5.2.1]{HA} and \cite[4.8.1]{HA}). 
In fact, one can canonically identify $\sC \otimes_k \sD$ with the subcategory of compact objects in 
$\mrm{Fun}^{\mrm{ex}}(\sC^{\mrm{op}}, \widehat{\sD})$ (see \cite[Corollary~4.11]{HSS}).

\begin{lem}\label{lem:tensor_prod_ex}
For any small $k$-linear idempotent complete stable $\infty$-category $\sC$
\[
- \otimes_k \sC : \mrm{Cat}^{\mrm{perf}}_{\infty, k} \to \mrm{Cat}^{\mrm{perf}}_{\infty, k}
\] 
preserves exact sequences and colimits.
\end{lem}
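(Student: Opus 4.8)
The plan is to treat the two assertions separately. That $-\otimes_k\sC$ preserves colimits is immediate from \ssecref{ssec:sym_mon}: it is left adjoint to $\mrm{Fun}^{\mrm{ex}}_k(\sC,-)$. So the substance is preservation of exact sequences, and I would reduce this to the single statement that \emph{if $f\colon\sA\to\sB$ is fully faithful, then so is $f\otimes\id_\sC$}.

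First I would record the easy half of the reduction. Let $\sA\xrightarrow{f}\sB\xrightarrow{g}\sC'$ be exact. Since $g\circ f$ is trivial, so is $(g\otimes\id_\sC)\circ(f\otimes\id_\sC)=(g\circ f)\otimes\id_\sC$. By definition of an exact sequence, $\sC'$ is the idempotent completion of the Verdier quotient $\sB/\sA$, i.e.\ $\sC'\simeq 0\sqcup_\sA\sB$ in $\mrm{Cat}^{\mrm{perf}}_{\infty,k}$; since $-\otimes_k\sC$ preserves this pushout, $\sC'\otimes_k\sC\simeq 0\sqcup_{\sA\otimes_k\sC}(\sB\otimes_k\sC)$, so $\sB\otimes_k\sC\to\sC'\otimes_k\sC$ exhibits $\sC'\otimes_k\sC$ as the idempotent-completed quotient of $\sB\otimes_k\sC$ by the image of $\sA\otimes_k\sC$. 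Granting that $f\otimes\id_\sC$ is fully faithful, this is precisely the assertion that $\sA\otimes_k\sC\to\sB\otimes_k\sC\to\sC'\otimes_k\sC$ is exact, and the lemma follows.

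For the full faithfulness I would use the formula for mapping spectra in $\sX\otimes_k\sC$ coming from the explicit description recalled in \ssecref{ssec:sym_mon} (cf.\ \cite{HSS}): the canonical bilinear functor $\sX\times\sC\to\sX\otimes_k\sC$ induces, for $x,x'\in\sX$ and $c,c'\in\sC$, an equivalence of $k$-module spectra
\[
\mrm{maps}_{\sX\otimes_k\sC}(x\otimes c,\,x'\otimes c')\;\simeq\;\mrm{maps}_\sX(x,x')\otimes_k\mrm{maps}_\sC(c,c').
\]
Every object of $\sA\otimes_k\sC$ (resp.\ $\sB\otimes_k\sC$) is a retract of a finite colimit of elementary tensors $a\otimes c$ with $a\in\sA$ (resp.\ $\in\sB$), $c\in\sC$, and $f\otimes\id_\sC$ preserves finite colimits and retracts and sends $a\otimes c\mapsto f(a)\otimes c$. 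By a standard d\'evissage on these generators, full faithfulness of $f\otimes\id_\sC$ reduces to the statement that
\[
\mrm{maps}_\sA(a,a')\otimes_k\mrm{maps}_\sC(c,c')\;\longrightarrow\;\mrm{maps}_\sB(f(a),f(a'))\otimes_k\mrm{maps}_\sC(c,c')
\]
is an equivalence for all $a,a'\in\sA$, $c,c'\in\sC$; and this holds because $f$ is fully faithful and tensoring a map of $k$-module spectra with $\mrm{maps}_\sC(c,c')$ preserves equivalences.

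A more structural route avoids the explicit mapping-spectrum formula: $\widehat{(-)}$ is symmetric monoidal and carries exact sequences to Verdier (simultaneously fiber and cofiber) sequences of compactly generated $\mbf{Mod}_k$-linear presentable stable $\infty$-categories, so under $\widehat{(-)}$ the functor $-\otimes_k\sC$ becomes $-\otimes_{\mbf{Mod}_k}\widehat{\sC}$; since $\widehat{\sC}$ is dualizable over $\mbf{Mod}_k$, this functor is simultaneously a left and a right adjoint, hence preserves both fiber and cofiber sequences, and restricting back to compact objects recovers the claim. Either way, the one genuinely non-formal ingredient—and the step I would be most careful about—is the behaviour of the tensor product on mapping spectra (equivalently: dualizability of $\widehat{\sC}$ together with the monoidality of $\widehat{(-)}$ and its compatibility with Verdier sequences); the rest is bookkeeping with colimits.
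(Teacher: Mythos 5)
Your argument is correct, but it is worth noting that the paper does not prove this lemma at all: it simply cites Lemma~9.35 of \cite{blumberg2013universal}, whose argument is essentially your second, ``structural'' route (pass to Ind-completions, use that $\widehat{\sC}$ is a dualizable $\mbf{Mod}_k$-module, so $-\otimes_{\mbf{Mod}_k}\widehat{\sC}$ preserves both the localization and the inclusion in a Verdier sequence of compactly generated categories, then descend to compact objects via Neeman--Thomason). Your first, hands-on proof is a genuinely different and more elementary argument, and all its steps check out: the reduction of exactness to full faithfulness of $f\otimes\id_\sC$ is legitimate because cofibers in $\mrm{Cat}^{\mrm{perf}}_{\infty,k}$ are idempotent-completed Verdier quotients and $-\otimes_k\sC$ preserves pushouts; and the mapping-spectrum formula $\mrm{maps}_{\sX\otimes_k\sC}(x\otimes c,x'\otimes c')\simeq\mrm{maps}_\sX(x,x')\otimes_k\mrm{maps}_\sC(c,c')$, which you rightly single out as the only non-formal input, does hold in this generality --- under the identification of $x\otimes c$ with the functor $x''\mapsto\mrm{maps}_\sX(x'',x)\otimes_k c$ in $\mrm{Fun}^{\mrm{ex}}_k(\sX^{\mrm{op}},\widehat{\sC})$ one gets $\mrm{maps}(x\otimes c,x'\otimes c')\simeq\mrm{maps}_{\widehat{\sC}}(c,\mrm{maps}_\sX(x,x')\otimes_k c')$, and since $\mrm{maps}_\sX(x,x')$ need not be a perfect $k$-module you must use compactness of $c$ (write the mapping spectrum as a filtered colimit of perfect $k$-modules) rather than dualizability to finish; this small point deserves a sentence if you write it up. What your direct route buys is independence from the dualizability machinery for presentable $k$-linear categories; what the cited route buys is brevity and the simultaneous treatment of fiber and cofiber, which is why the paper is content with the reference.
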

\begin{proof}
See Lemma~9.35 of \cite{blumberg2013universal}.
\end{proof}

\ssec{Non-commutative motives}
Tabuada, along with Blumberg, Cisinski and Gepner has developed a theory of non-commutative motives based 
on localizing invariants. 
The idea to define the $\infty$-category of motives of small idempotent complete stable $\infty$-categories 
as a universal localizing invariant comes quite naturally after 
one settles on the idea that localizing invariants are a good categorical analogue of cohomology theories. 
%Indeed, in a ``category of motives'' any suitable cohomology theory is expected to be representable by a motive 
%and every motive should represent a cohomology theory. 

One expects such an $\infty$-category of motives to be constructed as a localization of the $\infty$-category of presheaves on 
$\mrm{Cat}^{\mrm{perf}}_{\infty}$ and there should be a ``motive'' functor 
\[
M:\mrm{Cat}^{\mrm{perf}}_{\infty} \to \mrm{Fun}^{\mrm{loc}}((\mrm{Cat}^{\mrm{perf}}_{\infty})^\mrm{op}, \Spt)
\]
such that 
$
E(\sC) \simeq \mrm{maps}(M(\sC), E) 
$
for any $E \in \mrm{Fun}^{\mrm{loc}}((\mrm{Cat}^{\mrm{perf}}_{\infty})^{\mrm{op}}, \Spt)$.
However, this is not easy to achieve because of set-theoretic difficulties, so instead, one considers 
{\bf finitary} localizing invariants.

\begin{defn}
Let $\sE$ be a cocomplete stable $\infty$-category. We say that a functor 
\[
E: \mrm{Cat}^{\mrm{perf}}_{\infty,k} \to \sE
\]
is {\bf finitary} if it preserves filtered colimits. For a  cocomplete stable $\infty$-category $\sD$ we denote the subcategory of $\mrm{Fun}(\mrm{Cat}^{\mrm{perf}}_{\infty}, \sD)$ consisting of finitary localizing invariants 
by $\mrm{Fun}^{\mrm{loc}, \mrm{fin}}(\mrm{Cat}^{\mrm{perf}}_{\infty, k}, \sD)$.
\end{defn}

\begin{exam}
K-theory, topological Hochschild homology, homotopy K-theory, Blanc's topological K-theory are finitary localizing invariants. %TODO references?
\end{exam}

We denote the $\infty$-category of finitary $k$-localizing presheaves by $\mathcal{M}_{\mrm{loc}, k}$.
%The $\infty$-category of all finitary functors
%$\mrm{Cat}^{\mrm{perf}}_{\infty,k} \to \Spt$
%will be denoted by $\mrm{Fun}_{fin}(\mrm{Cat}^{\mrm{perf}}_{\infty}, \Spt)$.

\begin{rem}%\label{rem:}
The $\infty$-category $\mrm{Cat}^{\mrm{perf}}_{\infty, k}$ is compactly generated by compact 
$k$-linear stable $\infty$-categories (see \cite[Proposition~4.7]{HSS} and \cite[Corollary~4.25]{blumberg2013universal}). We denote this subcategory by $\mrm{Cat}^{\mrm{perf},\omega}_{\infty, k}$. 
In particular, this implies that the restriction functor 
\[
\mrm{Fun}(\mrm{Cat}^{\mrm{perf}}_{\infty,k}, \sE) \to \mrm{Fun}(\mrm{Cat}^{\mrm{perf},\omega}_{\infty,k}, \sE)
\]
induces an equivalence between finitary functors on $\mrm{Cat}^{\mrm{perf}}_{\infty,k}$ and all functors on $\mrm{Cat}^{\mrm{perf},\omega}_{\infty,k}$. 
Moreover, every $k$-localization sequence is a filtered colimit of $k$-localization sequences between compact 
$k$-linear stable $\infty$-categories (\cite[Proposition~5.5]{HSS}). Therefore, we also have an equivalence 
\[
\mathcal{M}_{\mrm{loc}, k} \simeq \mrm{Fun}^{\mrm{loc}}((\mrm{Cat}^{\mrm{perf},\omega}_{\infty,k})^{\mrm{op}}, \Spt).
\]
\end{rem}

\begin{thm}[{\cite[Theorem~1.1]{blumberg2013universal}}]
The embedding functor 
\[
\mathcal{M}_{\mrm{loc}, k} \to \mrm{Fun}((\mrm{Cat}^{\mrm{perf},\omega}_{\infty, k})^\mrm{op}, \Spt)
\]
admits a left adjoint. 
In particular, after precomposing with the Yoneda embedding one obtains a functor
\[
\mathcal{U}_{\mrm{loc},k}: \mrm{Cat}^{\mrm{perf}}_{\infty, k} \to \mathcal{M}_{\mrm{loc}, k}.
\]
This functor is a universal finitary localizing invariant in the sense that it induces an equivalence 
\[
\mrm{Fun}^{\mrm{L}}(\mathcal{M}_{\mrm{loc}, k}, \sD) \simeq \mrm{Fun}^{\mrm{loc}, \mrm{fin}}(\mrm{Cat}^{\mrm{perf}}_{\infty}, \sD)
\]
for any stably presentable $\infty$-category $\sD$. 
In this category one has $E(\sC) \simeq \mrm{maps}(\mathcal{U}_{\mrm{loc},k}(\sC), E)$ 
for any $E \in \mathcal{M}_{\mrm{loc}, k}$.
\end{thm}

\begin{rem}
Although all the constructions and the arguments in \cite{blumberg2013universal} are done 
in the non-linear case, i.e. the case of localizing invariants, 
they work verbatim for  $k$-localizing invariants. See \cite[4,5]{HSS} for details.
\end{rem}

\sssec{}\label{rem:presentability}
By construction, the $\infty$-category $\mathcal{M}_{\mrm{loc}, k}$ is presentable. 
In fact, it admits a presentable symmetric monoidal enhancement such that 
\[
\mathcal{U}_{\mrm{loc},k}: \mrm{Cat}^{\mrm{perf}}_{\infty, k} \to \mathcal{M}_{\mrm{loc}, k} 
\]
is symmetric monoidal. %Here the domain is endowed with a symmetric monoidal structure via 
%\cite[Theorem~4.5.2.1]{HA} and \cite[4.8.1]{HA}.
%\end{rem}

\sssec{}\label{rem:A1localization}
Presentability of $\mathcal{M}_{\mrm{loc}, k}$ implies the existence of a symmetric monoidal 
Bousfield localization with respect to the arrow $\mrm{Perf}_{k} \to \mrm{Perf}_{k[t]}$
\[
L_{\mathbb{A}^1}:\mathcal{M}_{\mrm{loc}, k} \to \mathcal{M}_{\mrm{loc}, \mathbb{A}^1, k}.
\]
This allows to define the {\it $\mathbb{A}^1$-invariantization} 
\[
L_{\mathbb{A}^1}E(\sC) := E(L_{\mathbb{A}^1}\sC)
\] 
of any finitary localizing invariant $E$. 
If $\sC$ is a $\mathbb{Z}$-linear, $L_{\mathbb{A}^1}K$ is equivalent to the homotopy K-theory defined 
previously (see \cite[Definition~3.13]{Land_2019}, \cite[Definition~11.5]{Weibel2013TheKA}). 
Moreover, one can show that the formula\footnote{I thank Sasha Efimov for pointing this out to me.}
\[
%L_{\mathbb{A}^1}
EH(\sC) = \colim\limits_{[n] \in \Delta^{\mrm{op}}} E(\sC \otimes_k \mbf{Perf}_{k[t]^{\otimes n}})
\]
yields an explicit description for $L_{\mathbb{A}^1}E$. 
Indeed, $EH$ defines a finitary localizing invariant and the map $E \to EH$ induces an equivalence upon 
applying $L_{\mathbb{A}^1}$, so it suffices to show that $EH$ is
$\mathbb{A}^1$-invariant. This follows from the following lemma.

\begin{lem}
The composite map
\[
\colim\limits_{\Delta^{\mrm{op}}} k[t]\otimes k[t]^{\otimes \bullet} \to \colim\limits_{\Delta^{\mrm{op}}}k[t]^{\otimes \bullet} \to \colim\limits_{\Delta^{\mrm{op}}} k[t]\otimes k[t]^{\otimes \bullet}
\]
is an equivalence of connective ring spectra. 
Consequently, the map $EH(\sC) \to EH(\sC \otimes_k \mbf{Perf}_{k[t]})$ is an equivalence.
\end{lem}
\begin{proof}
If $k$ is discrete or, more generally, $\mathbb{Z}$-linear, the proof can be found in 
\cite[IV.11.5.1(1)]{Weibel2013TheKA} or \cite[Proposition~5.2(1)]{tabuada_2015}.

In general, the forgetful functor $\mrm{CAlg}_k \to \mbf{Mod}_k$ is conservative (see 
\cite[Lemma~3.2.2.6]{HA}) and 
the extension of scalars functor $- \otimes_k \pi_0(k)$ is conservative on connective objects (it induces an 
equivalence on the lowest homotopy group), so we can 
reduce the general case to the discrete case.
\end{proof}

\begin{rem}
If $k$ is $\mathbb{Z}$-linear the simplicial diagram in the definition of 
$EH(\sC)$ is equivalent to the one in \cite[Definition~11.5]{Weibel2013TheKA}. However, over $\mathbb{S}$ 
we cannot use the presentation of $k[t]^{\otimes n}$ as the 
quotient of $k[t_0,\cdots, t_n]$ modulo the relation $t_0 + \cdots + t_n = 1$.
\end{rem}

\ssec{Smooth \& proper categories}

%Given $\sC \in \mrm{Cat}^{\mrm{perf}}_{\infty,k}$ we can make sense of modules 
\begin{defn}
Given small stable $\infty$-categories $\sC$, $\sD$ a $\sC$-$\sD$-{\bf bimodule} is 
an exact functor 
\[
\sC^{op} \otimes_k \sD \to \Spt.
\]
For any $\sC$ there is a canonical $\sC$-$\sC$-bimodule given by the mapping spectrum functor, which we call 
canonical. 
A {\bf perfect} $\sC$-$\sD$-bimodule is a bimodule which is a compact object of the functor category 
$\mrm{Fun}^{\mrm{ex}}(\sC^{op} \otimes_k \sD, \Spt)$.
\end{defn}

\begin{defn}
For $\sC \in \mrm{Cat}^{\mrm{perf}}_{\infty,k}$ we say:
\begin{itemize}
\item $\sC$ is {\bf smooth} if $\sC$ is perfect as a $\sC$-$\sC$-bimodule.
\item $\sC$ is {\bf proper} if the mapping spectra in $\sC$ are perfect $k$-modules.
\end{itemize}
\end{defn}

\begin{rem}
The notions of smoothness and properness are evidently self-dual.
\end{rem}

\begin{rem}
%For a scheme $X \in \mrm{Sch}_k$ the smoothness and properness of $\mbf{Perf}_X$ correspond precisely to 
%smoothness and properness of $X$.
Assume $k$ be a field. For a classical scheme $X \in \mrm{Sch}_k$ that is of finite type and separated,
being proper or smooth is equivalent to 
$\mbf{Perf}_X$ being proper or smooth (see \cite[Proposition~3.31]{Orlov_2016}; see also 
\cite[Example~0.4]{neeman_stronggen} for a more general statement).

\end{rem}

\begin{rem}
For $\sC, \sD \in \mrm{Cat}^{\mrm{perf}}_{\infty,k}$ such that $\sC$ is smooth and proper, we have
\[
\mrm{maps}_{\mathcal{M}_{\mrm{loc}, k}}(\mathcal{U}_{\mrm{loc}}(\sC), \mathcal{U}_{\mrm{loc}}(\sD)) \simeq K(\sC^{\mrm{op}} \otimes_k \sD).
\]
This is \cite[Theorem~5.25]{HSS} (see also \cite[Theorem~9.36]{blumberg2013universal}).
\end{rem}

%ssec:sym_mon
\begin{lem}\label{lem:smooth_right_compact}
For $\sC, \sD \in \mrm{Cat}^{\mrm{perf}}_{\infty,k}$, such that $\sC$ is smooth, the functor 
\[
\mrm{Fun}^{\mrm{ex}}(\sC^{\mrm{op}}, \sD) \to \mrm{Fun}^{\mrm{ex}}(\sC^{\mrm{op}}, \widehat{\sD})
\] 
factors through the subcategory $\sC \otimes_k \sD$ (see \ref{ssec:sym_mon}).
\end{lem}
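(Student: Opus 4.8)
The plan is to identify $\sC \otimes \sD$ explicitly as a full subcategory of $\mrm{Fun}^{\mrm{ex}}(\sC^{\mrm{op}}, \widehat{\sD})$ and then show that every exact functor $\sC^{\mrm{op}} \to \sD$, post-composed with the Yoneda embedding $\sD \hookrightarrow \widehat{\sD}$, lands in that subcategory. By \cite[Corollary~4.11]{HSS} (quoted in \ref{ssec:sym_mon}), $\sC \otimes_k \sD$ is the subcategory of compact objects of $\mrm{Fun}^{\mrm{ex}}(\sC^{\mrm{op}}, \widehat{\sD})$; so concretely I must show: if $F : \sC^{\mrm{op}} \to \sD$ is exact, then the composite $\sC^{\mrm{op}} \xrightarrow{F} \sD \hookrightarrow \widehat{\sD}$ is a compact object of $\mrm{Fun}^{\mrm{ex}}(\sC^{\mrm{op}}, \widehat{\sD})$.

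First I would recall that $\widehat{\sD} = \mrm{Fun}_{ex}(\sD^{\mrm{op}}, \Spt)$ is compactly generated with compact objects exactly (the idempotent completion of) the image of $\sD$, and that $\mrm{Fun}^{\mrm{ex}}(\sC^{\mrm{op}}, \widehat{\sD})$ is itself presentable (in fact compactly generated). The key structural input is smoothness of $\sC$: smoothness says the canonical $\sC$-$\sC$-bimodule $\mrm{maps}_{\sC}(-,-)$ is a compact object of $\mrm{Fun}^{\mrm{ex}}(\sC^{\mrm{op}} \otimes_k \sC, \Spt)$. Equivalently, $\sC$ is a \emph{dualizable} (compact) object of $\mrm{Pr}^L_{\mrm{st},k}$ (presentable stable $k$-linear categories), with dual $\widehat{\sC^{\mrm{op}}}$; this is the standard reformulation of smoothness. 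Then $\mrm{Fun}^{\mrm{ex}}(\sC^{\mrm{op}}, \widehat{\sD}) = \mrm{Fun}^L(\widehat{\sC^{\mrm{op}}}, \widehat{\sD}) \simeq \widehat{\sC} \otimes_k \widehat{\sD}$, and under this identification the subcategory of compact objects is generated by $x \otimes y$ for $x \in \sC$, $y \in \sD$; such a generator corresponds precisely to the functor $z \mapsto \mrm{maps}_{\sC}(z, x) \otimes_k y$ — a representable-type functor valued in $\sD \subset \widehat{\sD}$.

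So the remaining point is: an arbitrary exact $F : \sC^{\mrm{op}} \to \sD$ is built from such generators. Here I would argue that since $\sC$ is smooth, the identity functor on $\sC$ is a \emph{perfect} bimodule, hence a finite colimit (up to retracts) of bimodules of the form $\mrm{maps}_\sC(-, c_i) \otimes \mrm{maps}_\sC(c_j', -)$ — equivalently, the Yoneda functor $\sC^{\mrm{op}} \to \widehat{\sD}$ obtained from $F$ is a retract of a finite colimit of the generators $\mrm{maps}_\sC(-, c_i) \otimes F(c_i)$ appearing above, via the coend/counit presentation $F(-) \simeq F \circ (\text{canonical bimodule})$. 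Since $\sC \otimes \sD$, being compact objects, is closed under finite colimits and retracts in $\mrm{Fun}^{\mrm{ex}}(\sC^{\mrm{op}}, \widehat{\sD})$, and each generator lies in it, we conclude $F$ (composed with Yoneda) lies in $\sC \otimes \sD$. Naturality in $F$ gives the factorization of the whole functor $\mrm{Fun}^{\mrm{ex}}(\sC^{\mrm{op}}, \sD) \to \mrm{Fun}^{\mrm{ex}}(\sC^{\mrm{op}}, \widehat{\sD})$ through $\sC \otimes \sD$.

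The main obstacle, I expect, is making the "$F$ is a retract of a finite colimit of representable-type generators" step precise: it amounts to unwinding the definition of smoothness (perfectness of the diagonal bimodule) into a concrete finite resolution and checking that applying $F$ and Yoneda is compatible with the relevant colimits — i.e. that exactness of $F$ and cocontinuity of $\sD \hookrightarrow \widehat{\sD}$ on finite colimits suffice. The cleanest route is probably the dualizability reformulation: smoothness $\Leftrightarrow$ $\sC$ compact-dualizable in $\mrm{Pr}^L_{\mrm{st},k}$, then $\sC \otimes \sD \simeq \mrm{Fun}^{\mrm{ex}}(\sC^{\mrm{op}}, \widehat{\sD})^\omega$ already says the generators are the functors valued in $\sD$, and one notes every exact $\sC^{\mrm{op}} \to \sD$ is such a functor by inspection — the factorization is then essentially tautological once the identification of compact objects is in hand. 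If the paper has that identification available, the lemma is short; the work is entirely in citing/assembling the smoothness $\Leftrightarrow$ dualizability dictionary correctly.
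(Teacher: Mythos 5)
Your main argument is correct in outline and genuinely different in mechanics from the paper's. Both proofs run through the identification $\sC \otimes_k \sD \simeq \mrm{Fun}^{\mrm{ex}}(\sC^{\mrm{op}},\widehat{\sD})^\omega$ and both ultimately rest on compactness of the canonical bimodule, but the paper avoids your resolution step entirely: since the comparison functor is natural in $\sD$, it suffices to treat the universal case $\sD=\sC^{\mrm{op}}$, $F=\id$, whose image under $\mrm{Fun}^{\mrm{ex}}(\sC^{\mrm{op}},\sC^{\mrm{op}})\to\mrm{Fun}^{\mrm{ex}}(\sC^{\mrm{op}} \otimes_k \sC,\Spt)$ is literally the canonical bimodule, compact by smoothness; postcomposition with the induced functor on ind-completions (which preserves compacts, by functoriality of the tensor product) then carries this to the image of a general $F$. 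Your route instead unwinds smoothness into a retract-of-finite-colimit presentation of the diagonal by bimodules $\mrm{maps}_\sC(-,c)\otimes\mrm{maps}_\sC(c',-)$ and pushes it through the coend presentation $F \simeq \mrm{maps}_\sC \otimes_\sC F$; this works, but you must construct the operation $-\otimes_\sC F$, check it preserves the relevant colimits and retracts, and check it sends the generators to $\mrm{maps}_\sC(-,c)\otimes F(c')$, which lie in $\sC\otimes\sD$ since $F(c')\in\sD$ --- exactly the bookkeeping the paper's naturality trick sidesteps. The trade-off: your argument is more explicit (it exhibits the image of $F$ as a concrete finite cell object built from objects of $\sC\otimes\sD$), while the paper's is a two-line reduction.

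One warning about your closing ``cleanest route'': it is not true that the compact objects of $\mrm{Fun}^{\mrm{ex}}(\sC^{\mrm{op}},\widehat{\sD})$ are ``the functors valued in $\sD$,'' and the factorization is not tautological --- otherwise the smoothness hypothesis would be superfluous. Taking $\sD=\sC^{\mrm{op}}$ and $F=\id$, the image is the canonical bimodule, which is compact if and only if $\sC$ is smooth. Likewise ``smooth $\Leftrightarrow$ dualizable in $\mrm{Pr}^L_{\mrm{st},k}$'' is off: every compactly generated $k$-linear presentable stable category is dualizable there; smoothness is the strictly stronger condition that the canonical bimodule (equivalently, the coevaluation datum) is compact. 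So keep your first, resolution-based argument and drop the shortcut.
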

\begin{proof}
By naturality we have a commutative diagram
\[
\begin{tikzcd}
\mrm{Fun}^{\mrm{ex}}(\sC^{\mrm{op}}, \sC^{\mrm{op}})\arrow[d]\arrow[r] & \mrm{Fun}^{\mrm{ex}}(\sC^{\mrm{op}}, \widehat{\sC^{\mrm{op}}})\arrow[d] 
& \sC \otimes_k \sC^{\mrm{op}}\arrow[l]\arrow[d]\\
\mrm{Fun}^{\mrm{ex}}(\sC^{\mrm{op}}, \sD) \arrow[r] & \mrm{Fun}^{\mrm{ex}}(\sC^{\mrm{op}}, \widehat{\sD})
& \sC \otimes_k \sD\arrow[l]
\end{tikzcd}
\]
for any functor $\sC^{\mrm{op}} \to \sD$. Hence it suffices to show the claim for $\sD = \sC^{\mrm{op}}$ and 
the identity functor.
In other words, we need to show that the image of the identity functor along
\[
\mrm{Fun}^{\mrm{ex}}(\sC^{\mrm{op}}, \sC^{\mrm{op}}) \to 
\mrm{Fun}^{\mrm{ex}}(\sC^{\mrm{op}}, \widehat{\sC^{\mrm{op}}})\simeq \mrm{Fun}^{\mrm{ex}}(\sC^{op} \otimes_k \sC, \Spt)
\]
is compact. The image is the canonical bimodule which is compact by smoothness of $\sC$.
\end{proof}

\section{Endomorphism categories}\label{section:endcats}
\begin{defn}\label{defn:endomorphisms}
%Let $R$ be a commutative ring. %spectrum. %$k$-algebra. 
Let $\sC$ be a small $k$-linear idempotent complete stable $\infty$-category.
We introduce the following $\infty$-categories:
\begin{enumerate}
\item The %$k$-linear 
stable $\infty$-category of endomorphisms on objects of $\sC$: 
\[
\mathcal{E}nd(\sC) : = \mrm{Fun}(B\mathbb{N}, \sC).
\]
\item The $\infty$-category of nilpotent endomorphisms
$\mathcal{N}il(\sC)$ is defined to be the full subcategory of $\mathcal{E}nd(\sC)$ consisting of endomorphisms 
$x \stackrel{p}\to x$ such that the colimit in $\widehat{\sC}$ 
\[
x[p^{-1}] := \colim (x \stackrel{p}\to x \stackrel{p}\to x \stackrel{p}\to \cdots)
\]
is trivial. 
It is a stable subcategory because the colimit functor 
\[
\mrm{Fun}(B\mathbb{N}, \sC) \to \mrm{Fun}(B\mathbb{N}, \widehat{\sC}) \stackrel{\colim} \to \widehat{\sC}
\]
is exact.
\end{enumerate}
\end{defn}

\begin{lem}\label{lem:modules_vs_end}
For a small $k$-linear idempotent complete stable $\infty$-category $\sC$ we have a natural equivalence
\[
\mrm{Fun}^{\mrm{ex}}_k(\mbf{Perf}_{k[t]}, \sC) \simeq  \mrm{Fun}(B\mathbb{N}, \sC).
\]
\end{lem}
\begin{proof}
For $k=\mathbb{S}$ the claim follows from \cite[Proposition~3.9]{blumberg2015witt}. 
The general result follows from the adjunction (see \cite[Remark~4.5.3.2]{HA})
\[
\mrm{Fun}^{\mrm{ex}}_k(\mbf{Perf}_{\mathbb{S}[t]} \otimes_{\mathbb{S}} \mbf{Perf}_{k}, \sC) \simeq 
\mrm{Fun}^{\mrm{ex}}(\mbf{Perf}_{\mathbb{S}[t]}, \sC)
\]
and the equivalence $\mbf{Perf}_{\mathbb{S}[t]} \otimes_{\mathbb{S}} \mbf{Perf}_{k} = \mbf{Perf}_{k[t]}$ (see \cite[Remark~4.8.5.17]{HA}).
\end{proof}

\begin{lem}\label{lem:dual_desc}
There is a canonical equivalence between 
$\mathcal{E}nd(\sC)$ 
and the full subcategory 
of
\[
\mrm{Fun}_k^{\mrm{ex}}(\sC^{\mrm{op}}, \mbf{Mod}_{k[t]})
\]
consisting of  functors $F:\sC^{\mrm{op}} \to \mbf{Mod}_{k[t]}$ for which the composite 
\[
\sC^{\mrm{op}} \to \mbf{Mod}_{k[t]} \stackrel{\mathcal{U}}\to \mbf{Mod}_{k}
\]
is representable.
\end{lem}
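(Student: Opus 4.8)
The plan is to exhibit the equivalence as a restriction of a more primitive one, and to track which objects land where under it. The starting point is the observation that $B\mathbb{N}$, regarded as a $k$-linear stable category via $\mbf{Perf}$ of its group algebra, is nothing but $\mbf{Perf}_{k[t]}$: indeed $k[B\mathbb{N}] = k[t]$ (the monoid algebra on $\mathbb{N}$), so $\mrm{Fun}(B\mathbb{N}, \sC) = \mrm{Fun}(B\mathbb{N}, \Spc) \otimes_{\Spt\text{-modules}}\sC$ can be rewritten using the self-duality of $\mbf{Perf}_{k[t]}$. Concretely, I would first record the chain of equivalences
\[
\mathcal{E}nd(\sC) = \mrm{Fun}(B\mathbb{N},\sC) \simeq \mbf{Perf}_{k[t]} \otimes_k \sC \simeq \mrm{Fun}^{\mrm{ex}}_k((\mbf{Perf}_{k[t]})^{\mrm{op}}, \widehat{\sC})^\omega,
\]
where the last step is the explicit model for the tensor product recalled in \ref{ssec:sym_mon} (via \cite[Corollary~4.11]{HSS}), valid since $\mbf{Perf}_{k[t]}$ is smooth (apply Lemma~\ref{lem:smooth_right_compact}). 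Dualizing the source, $(\mbf{Perf}_{k[t]})^{\mrm{op}} \simeq \mbf{Perf}_{k[t]}$ as $k[t]$ is commutative, so a $k$-linear exact functor out of it is the same as a $k[t]$-linear exact functor $\sC^{\mrm{op}} \to \widehat{\sC}$-valued modules; unwinding, $\mathcal{E}nd(\sC)$ is identified with a full subcategory of $\mrm{Fun}^{\mrm{ex}}_k(\sC^{\mrm{op}}, \mbf{Mod}_{k[t]})$ — namely the one cut out by the compactness condition in the tensor-product model.

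The remaining task is to identify that compactness condition with the stated condition: that the composite to $\mbf{Mod}_k$ (forgetting the $t$-action along $\mathcal{U}$) be representable, i.e. land in $\sC$ under the Yoneda embedding $\sC \hook \widehat{\sC}$. Here I would argue in two directions. A functor $F\colon \sC^{\mrm{op}} \to \mbf{Mod}_{k[t]}$ corresponds to a pair: the underlying $k$-linear functor $\mathcal{U}\circ F\colon \sC^{\mrm{op}}\to \mbf{Mod}_k$ together with an endomorphism (the $t$-action). Under the identification of $\sC\otimes_k\mbf{Perf}_k$-modules, the object lies in the compact part $\sC\otimes_k\mbf{Perf}_{k[t]}$ precisely when its image under the (exact, colimit-preserving) restriction-of-scalars $\mbf{Mod}_{k[t]}\to\mbf{Mod}_k$ lies in the compact part $\sC\otimes_k\mbf{Perf}_k = \sC$; this uses that $k[t]$ is a compact (indeed perfect, since $k[t] \simeq k\oplus k\oplus\cdots$ is not perfect — rather, one uses that $\mbf{Perf}_k\to\mbf{Perf}_{k[t]}$ has a colimit-preserving right adjoint whose composite the other way detects compactness). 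So the key mechanism is: compactness in the $k[t]$-linear world is detected after forgetting to the $k$-linear world, and compactness there is exactly representability in $\sC$.

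The step I expect to be the main obstacle is the last one — making precise the claim that an object of $\mrm{Fun}^{\mrm{ex}}(\sC^{\mrm{op}}, \widehat{\mbf{Perf}_{k[t]}})$-type category is compact iff its underlying $k$-object is. One must be careful that restriction of scalars along $k\to k[t]$ is \emph{not} the right functor to use directly ($k[t]$ is not perfect over $k$), so the cleanest route is probably to use instead base change $-\otimes_{k} k[t]$, which is symmetric monoidal and sends compacts to compacts, together with the fact that $k[t]$-modules are modules in $k$-modules over the algebra object $k[t]$, and invoke that a $k[t]$-module is perfect iff it is perfect as a $k$-module whenever it is — no, that also fails. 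The correct statement, which I would isolate as the crux, is that $M\in\mbf{Mod}_{k[t]}$ is compact iff the underlying $k$-module together with the $t$-action presents $M$ as a \emph{finitely built} object from $k[t]$ itself, equivalently iff the cofiber of $t\colon M'\to M'$ is perfect over $k$ for the "naive" $k$-model $M'$; this is exactly the mechanism already visible in the $X[p^{-1}]$ colimit appearing in Definition~\ref{defn:endomorphisms}, and it is where the real content of the lemma sits. Everything else is bookkeeping with the adjunctions of \ref{ssec:sym_mon} and the self-duality of $\mbf{Perf}_{k[t]}$.
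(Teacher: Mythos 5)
Your opening identification is where the argument breaks: $\mathcal{E}nd(\sC)=\mrm{Fun}(B\mathbb{N},\sC)$ is \emph{not} equivalent to $\mbf{Perf}_{k[t]}\otimes_k\sC$. What is true (and is all the paper claims, in Remark~\ref{rem:end_vs_ktmod}, using smoothness of $\mbf{Perf}_{k[t]}$ via Lemma~\ref{lem:smooth_right_compact}) is that there is a fully faithful embedding $\mathcal{E}nd(\sC)\hookrightarrow \sC\otimes_k\mbf{Perf}_{k[t]}$, and it is not essentially surjective: already for $\sC=\mbf{Perf}_k$ one has $\mbf{Perf}_k\otimes_k\mbf{Perf}_{k[t]}\simeq\mbf{Perf}_{k[t]}$, which contains $k[t]$ itself, whereas $\mathcal{E}nd(\mbf{Perf}_k)$ consists of $k$-perfect modules with an endomorphism and $k[t]$ is not perfect over $k$. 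Consequently your intended crux --- ``compactness in the tensor-product model $\mrm{Fun}^{\mrm{ex}}(\sC^{\mrm{op}},\mbf{Mod}_{k[t]})^\omega$ coincides with representability of the underlying $\mbf{Mod}_k$-valued functor'' --- is false in the only direction you would need: representability of the underlying functor implies compactness (that is the content of Lemma~\ref{lem:smooth_right_compact}), but not conversely, as the same example shows ($X\mapsto\mrm{maps}_k(X,k[t])$ is compact but represented only by the non-perfect object $k[t]$). You in fact notice the difficulty and discard two candidate fixes, but the final fallback (``compact iff the cofiber of $t$ on a naive $k$-model is perfect over $k$'') is also not correct (consider $k[t,t^{-1}]$, whose cofiber of $t$ vanishes) and in any case is not the condition in the statement, so the essential-image identification is never actually established.

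The statement does not need compactness, smoothness, or the tensor product at all, and the intended argument is more elementary. Use that $k[t]$ is the monoid algebra on $\mathbb{N}$, so $\mathcal{E}nd(\sC)=\mrm{Fun}(B\mathbb{N},\sC)\simeq\mrm{Fun}^{\mrm{ex}}_k(\mbf{Perf}_{k[t]},\sC)$ (functors \emph{out of} $\mbf{Perf}_{k[t]}$ into $\sC$; evaluation at $k[t]$ recovers the underlying object). Post-composing with the Yoneda embedding $\sC\hookrightarrow\widehat{\sC}$ and currying gives a fully faithful functor
\[
\mrm{Fun}^{\mrm{ex}}_k(\mbf{Perf}_{k[t]},\sC)\hookrightarrow\mrm{Fun}^{\mrm{ex}}_k(\mbf{Perf}_{k[t]},\widehat{\sC})\simeq\mrm{Fun}^{\mrm{ex}}_k(\sC^{\mrm{op}},\mbf{Mod}_{k[t]}),
\]
sending $F$ to $X\mapsto\mrm{maps}_\sC(X,F(k[t]))$ with the $t$-action induced from $F(k[t])$; its underlying $\mbf{Mod}_k$-valued functor is represented by $F(k[t])\in\sC$, and conversely any $G$ whose underlying functor is represented by some $Y\in\sC$ arises from $Y$ equipped with the endomorphism induced by $t$. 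This directly identifies the essential image with the representable-underlying-functor condition of the lemma.
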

\begin{proof}
By Lemma~\ref{lem:modules_vs_end} we have an equivalence 
\[
\mathcal{E}nd(\sC) \simeq \mrm{Fun}^{\mrm{ex}}_k(\mbf{Perf}_{k[t]}, \sC).
\]
Under this identification taking the underlying object of 
an endomorphism corresponds to 
evaluating a functor on the tautological module $k[t]$-module $k[t]$. 
Now consider the fully faithful composite 
\[
\Psi: \mrm{Fun}^{\mrm{ex}}_k(\mbf{Perf}_{k[t]}, \sC) \to \mrm{Fun}^{\mrm{ex}}_k(\mbf{Perf}_{k[t]}, \widehat{\sC}) \simeq \mrm{Fun}^{\mrm{ex}}_k(\mbf{Perf}_{k[t]} \otimes_k \sC^{\mrm{op}}, \mbf{Mod}_k) \simeq \mrm{Fun}^{\mrm{ex}}_k(\sC^{\mrm{op}}, \mbf{Mod}_{k[t]}).
\]
It sends an exact functor $F:\mbf{Perf}_{k[t]}\to \sC$ to a functor 
\[
\sC^{\mrm{op}} \to \mbf{Mod}_{k[t]}
\] 
\[
X \mapsto \mrm{maps}_\sC(X, F(k[t])).
\]
Here the action of $k[t]$ is induced by the action on $F(k[t])$. 
Forgetting the action we get a functor represented by $F(k[t])$. On the other hand, for any exact functor
$\sC^{\mrm{op}} \stackrel{G}\to \mbf{Mod}_{k[t]}$ for which the composite
\[
\sC^{\mrm{op}} \stackrel{G}\to \mbf{Mod}_{k[t]} \stackrel{\mathcal{U}}\to \mbf{Mod}_{k}
\]
is representable by $Y$ in $\sC$, consider the functor 
\[
\tilde{G}:\mbf{Perf}_{k[t]} \to \sC
\]
sending 
$k[t]$ to $Y$ with the induced action of $k[t]$. Now by construction $\Psi(\tilde{G}) = G$.
\end{proof}

The usage of the word {\it nilpotent} in Definition~\ref{defn:endomorphisms}(2) is not accidental: 

\begin{lem}\label{lem:nilp}
For $(P,\phi) \in \mathcal{N}il(\sC) \subset \mathcal{E}nd(\sC)$ there exists $n\in \mathbb{N}$ such that 
$\phi^n \simeq 0$.
\end{lem}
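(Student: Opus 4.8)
The plan is to reduce the statement to a finiteness property of the object $(P,\phi)$ inside the presentable category $\widehat{\sC}$ and then exploit compactness. First I would recall from \lemref{lem:dual_desc} and its proof that $\mathcal{E}nd(\sC) \simeq \mrm{Fun}^{\mrm{ex}}_k(\mbf{Perf}_{k[t]}, \sC)$, so an object of $\mathcal{N}il(\sC)$ is the same datum as an exact $k$-linear functor $F\colon \mbf{Perf}_{k[t]}\to \sC$ with $F(k[t]) = P$ and with $F$ factoring through the localization $\mbf{Perf}_{k[t]}\to \mbf{Perf}_{k[t][t^{-1}]}$ being zero — more precisely, the nilpotence condition says exactly that the image of the localizing subcategory generated by the cone of $t$ is all of $\sC$, equivalently that $\widehat{F}(k[t][t^{-1}]) = P[\phi^{-1}] = 0$ in $\widehat{\sC}$, where $\widehat{F}\colon \mbf{Mod}_{k[t]}\to\widehat{\sC}$ is the colimit-preserving extension.

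The key step is then the following. In $\mbf{Mod}_{k[t]}$ one has the cofiber sequence of the telescope: writing $T$ for the cone of the map $\bigoplus_{n\ge 0} k[t]\xrightarrow{\mathrm{id}-t} \bigoplus_{n\ge 0} k[t]$, there is an identification $k[t][t^{-1}]\simeq T$, and this exhibits $k[t][t^{-1}]$ as a filtered colimit of the $n$-fold ``truncations'' together with the cofiber sequences $k[t]\xrightarrow{t^n} k[t]\to k[t]/t^n$. Concretely, $P[\phi^{-1}] = \colim(P\xrightarrow{\phi}P\xrightarrow{\phi}\cdots)$ is a filtered colimit in $\widehat{\sC}$ of copies of $P$. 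Now $P$ lies in the essential image of the Yoneda embedding $\sC\hook\widehat{\sC}$, and every object of $\sC$ is \emph{compact} in $\widehat{\sC}$ (this is the defining property of $\widehat{\sC} = \mrm{Fun}_{ex}(\sC^{\mathrm{op}},\Spt)$ as the Ind-completion, recorded in the Notations). Therefore the identity map $P\to P$, viewed as an element of $\pi_0\mrm{maps}_{\widehat{\sC}}(P, P[\phi^{-1}])$ — which is zero since $P[\phi^{-1}] = 0$ — must already factor through a finite stage of the telescope: there is some $n$ with the composite $P\xrightarrow{\mathrm{id}} P\to (\text{stage } n\text{ of telescope})$ nullhomotopic, i.e. $\phi^n\colon P\to P$ is nullhomotopic in $\widehat{\sC}$, hence in $\sC$ since the Yoneda embedding is fully faithful.

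More carefully, I would phrase it as: the zero map factors as $\id_P$ followed by the canonical map $P\to P[\phi^{-1}]$; by compactness of $P$ this canonical map, being the colimit of the maps $P\to P$ into the $n$-th term of the sequence $P\xrightarrow{\phi}P\xrightarrow{\phi}\cdots$, has the property that any map from $P$ into the colimit factors through some finite term; applying this to $\id_P$ and noting it becomes zero in the colimit, we get that the map $P\xrightarrow{\phi^n} P$ corresponding to the $n$-th transition is nullhomotopic for $n$ large. Then $\phi^n\simeq 0$ in $\sC$.

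\textbf{Main obstacle.} The only real subtlety is bookkeeping about the telescope diagram and the precise meaning of ``factors through a finite stage'': one must check that the map $P \to P[\phi^{-1}]$ is genuinely the transfinite composition / colimit of the $\phi$-tower so that compactness applies in the form ``$\colim_n \mrm{maps}(P, P_n) \simeq \mrm{maps}(P, \colim_n P_n)$'', and that the induced map $P\to P_n$ in that colimit description is the $n$-fold iterate $\phi^n$ up to the relevant identifications. Once that diagram-chase is set up correctly, the conclusion is immediate. I expect this to be routine, so the lemma should follow in a few lines.
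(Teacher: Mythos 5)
Your argument is correct and is essentially the paper's: both exploit that $P$ is compact in $\widehat{\sC}$ while the $\phi$-telescope $P[\phi^{-1}]$ vanishes, so that $\id_P$ must die at a finite stage of the relevant filtered colimit, forcing $\phi^n \simeq 0$. The only cosmetic differences are that the paper first passes to $P \simeq \colim_n \Fib(P \xrightarrow{\phi^n} P)$ and factors the identity through a finite stage, whereas you apply compactness directly to $\mrm{maps}(P,-)$ of the telescope, and your preliminary detour through \lemref{lem:dual_desc} is unnecessary since the vanishing of $P[\phi^{-1}]$ is already the definition of $\mathcal{N}il(\sC)$.
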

\begin{proof}
In view of the exact sequences
\[
\Fib(P \stackrel{\phi^n}\to P) \to P \stackrel{\phi^n}\to P
\]
and the vanishing of $P[\phi^{-1}]$ we have 
\[
P = \colim\limits_n \Fib(P \stackrel{\phi^n}\to P).
\]
Now by compactness of $\sC \subset \widehat{\sC}$, we see that there exists an $n\in\mathbb{N}$ and a 
map $P \to \Fib(P \stackrel{\phi^n}\to P)$ making 
the following commutative diagram commute:
\[
\begin{tikzcd}
P \arrow[r]\arrow[dr, "\id"] & \Fib(P \stackrel{\phi^n}\to P)\arrow[d]\arrow[dr, "0"] \\
& P\arrow[r,"\phi^n"] &P.
\end{tikzcd}
\]
In particular $\phi^n \simeq 0$.
\end{proof}

\begin{rem}\label{rem:end_vs_ktmod}
Since $\mbf{Perf}_{k[t]}$ is smooth, by Lemma~\ref{lem:smooth_right_compact} we have a fully faithful embedding
\[
\mathcal{E}nd(\sC) \to \sC \otimes_k \mbf{Perf}_{k[t]}.
\]
We also have an embedding 
\[
\mathcal{E}nd(\sC) \to \mrm{Fun}_k^{\mrm{ex}}(\sC^{\on{op}}, \mbf{Mod}_{k[t]}).
\]
\end{rem}

\ssec{Tensoring endomorphisms}\label{ssec:action}
Note that 
$\mrm{Fun}(B\mathbb{N}, -) : \mrm{Cat}^{\mrm{perf}}_{\infty, k} \to \mrm{Cat}^{\mrm{perf}}_{\infty, k}$ is 
lax symmetric monoidal, so the symmetric monoidal structure on $\mbf{Perf}_k$ induces a sectionwise symmetric monoidal structure on 
\[
\mathcal{E}nd(\mbf{Perf}_k) = \mrm{Fun}(B\mathbb{N}, \mbf{Perf}_k).
\]
Now for $\sC$ being a small $k$-linear stable $\infty$-category  $\mathcal{E}nd(\sC)$ is 
$\mathcal{E}nd(\mbf{Perf}_{k})$-tensored. In particular, the biexact pairing 
\[
\otimes: \sC \times \mbf{Perf}_{k} \to \sC
\]
induces a biexact pairing
%\[
%\otimes : \mrm{Fun}^{\mrm{ex}}_k(\sC^{\mrm{op}},\mbf{Mod}_{k[t]}) \times \mbf{Mod}_{k[t]} \to %\mrm{Fun}^{\mrm{ex}}_k(\sC^{\mrm{op}},\mbf{Mod}_{k[t]})
%\]
%and
\[
\otimes : \mathcal{E}nd(\sC) \times \mathcal{E}nd(\mbf{Perf}_k) \to \mathcal{E}nd(\sC)
\]
acting as follows:
\[
(P,\phi), (Q,\psi) \mapsto (P\otimes Q, \phi \otimes \psi).
\]
This also restricts to an action of 
$\mathcal{E}nd(\mbf{Perf}_{k})$ on $\mathcal{N}il(\sC)$. 

\ssec{}\label{ssec:nil0}
Sending an object $P \in \sC$ to the pair $(P,0)$ gives rise to an exact %$k$-linear 
functor
\[
\iota:\sC \to \mathcal{E}nd(\sC)
\]
which factors through an exact %$k$-linear
 functor
\[
\iota_{\on{nil}}:\sC \to \mathcal{N}il(\sC).
\]
This functor admits a left inverse 
\[
\mathcal{E}nd(\sC) \to \sC
\]
given by sending an endomorphism to its underlying module.  
We denote the respective cofibers of the maps 
\[
\mathcal{U}_{\on{loc},k}(\sC) \to \mathcal{U}_{\on{loc},k}(\mathcal{E}nd(\sC))
\] 
\[
\mathcal{U}_{\on{loc},k}(\sC) \to \mathcal{U}_{\on{loc},k}(\mathcal{N}il(\sC))
\] 
in the $\infty$-category $\mathcal{M}_{\mrm{loc},k}$ by $\mathcal{E}nd_0(\sC)$ and $\mathcal{N}il_0(\sC)$. 
We also may think of them as the images of idempotents %$\pi$ given by sending 
\[
\pi: (P,\phi) \mapsto (P,\phi) - (P,0).
\]
Abusing the notation, we denote by
$E(\mathcal{E}nd_0(\sC))$ and $E(\mathcal{N}il_0(\sC))$ 
the images of the corresponding idempotents on $E(\mathcal{E}nd(\sC))$ and $E(\mathcal{N}il(\sC))$ 
for any $k$-localizing invariant $E$ (i.e. not necessarily finitary). 
This is justified by the fact that for a finitary $E$ we have
\[
E(\mathcal{E}nd_0(\sC)) = \mrm{maps}(\mathcal{E}nd_0(\sC), E),
\]
\[
E(\mathcal{N}il_0(\sC)) = \mrm{maps}(\mathcal{N}il_0(\sC), E).
\]

\begin{rem}
The image of $K_0(\sC)$ in $K_0(\mathcal{E}nd(\sC))$ gives an ideal $\mathcal{I}_{\on{triv}}$, so
$K_0(\mathcal{E}nd(\sC))$ and $K_0(\mathcal{E}nd_0(\sC))$ are both rings. 
\end{rem}

\section{Nilpotent endomorphisms obstruct \texorpdfstring{$\mathbb{A}^1$}{A1}-invariance}\label{section:nilend_a1inv}

Most of the paper is dedicated to studying the $\infty$-categories $\mathcal{N}il(\sC)$ and 
$\mathcal{E}nd(\sC)$. 
This section explains the main reason we need to do it: as Proposition~\ref{prop:role_of_nil} shows, 
the $\infty$-category $\mathcal{N}il(\sC)$ captures the failure of $\mathbb{A}^1$-invariance. 

\begin{defn}
Let $E$ be a functor 
\[
\mrm{Cat}^{\mrm{perf}}_{\infty, k} \to \Spt.
\]
For any $\sC \in \mrm{Cat}^{\mrm{perf}}_{\infty, k}$ we define 
\[
NE(\sC) := \Fib(E(\sC \otimes_k \mbf{Perf}_{k[t]}) \to E(\sC  \otimes_k \mbf{Perf}_{k})),
\]
where $k[t] \to k$ is the evaluation map. 
\end{defn}

\sssec{} $NE(-)$ promotes to a functor $\mrm{Cat}^{\mrm{perf}}_{\infty, k} \to \Spt$. 
Equivalently $NE(\sC)$ can be defined as the cofiber of the map induced by the $k$-algebra structure:
\[
E(\sC  \otimes_k \mbf{Perf}_{k}) \to E(\sC  \otimes_k \mbf{Perf}_{k[t]}).
\]

\begin{exam}
The nonconnective K-theory spectrum gives rise to a functor $\mrm{Cat}^{\mrm{perf}}_{\infty, k} \to \Spt$.
We have $NK(\mbf{Perf}_X) = 0$ if $X$ is a regular noetherian scheme, however, in general
it is non-trivial. %This has been studied by \cite{}
\end{exam}

\begin{exam}
The functor $L_{\mathbb{A}^1}E$ defined in \ref{rem:A1localization} satisfies $N(L_{\mathbb{A}^1}E)(\sC) \simeq 0$ for all $\sC \in \mrm{Cat}^{\mrm{perf}}_{\infty, k}$.
%See \cite[5.1]{tabuada_2015} and \cite[3.13]{Land_2019}.
\end{exam}
%For the rest of the paper we fix a commutative ring spectrum $k$. 

\begin{prop}\label{prop:role_of_nil}
For a small $k$-linear idempotent complete stable $\infty$-category $\sC$ 
the canonical embedding from Remark~\ref{rem:end_vs_ktmod} induces an equivalence 
\[
\Psi:\mathcal{N}il(\sC) \to \mrm{Ker}(\sC \otimes_k \mbf{Perf}_{k[t]} \to \sC \otimes_k \mbf{Perf}_{k[t,t^{-1}]}).
\]
%given by sending $(M,p)$ to the $k[t]$-module $M$ where $t$ acts via $p$.
Futhermore, the map $\iota_{\mrm{nil}}$ corresponds to the functor 
\[
\sC \otimes_k ev_{*} : \sC \otimes_k \mbf{Perf}_k \to \sC \otimes_k \mbf{Perf}_{k[t]}
\]
under this equivalence. Here $ev_{*}$ is the restriction of scalars functor along the evaluation map.
\end{prop}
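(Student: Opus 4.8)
The plan is to identify $\mathcal{N}il(\sC)$ with an explicit subcategory of $\sC \otimes_k \mbf{Perf}_{k[t]}$ by passing through the functor-category description of the tensor product. First I would recall from Remark~\ref{rem:end_vs_ktmod} and Lemma~\ref{lem:dual_desc} that $\mathcal{E}nd(\sC)$ sits fully faithfully inside $\sC \otimes_k \mbf{Perf}_{k[t]}$, which in turn (using that $\mbf{Perf}_{k[t]}$ is smooth, via Lemma~\ref{lem:smooth_right_compact} and \ssecref{ssec:sym_mon}) is the subcategory of compact objects of $\mrm{Fun}^{\mrm{ex}}(\sC^{\mrm{op}}, \widehat{\mbf{Perf}_{k[t]}}) \simeq \mrm{Fun}_k^{\mrm{ex}}(\sC^{\mrm{op}}, \mbf{Mod}_{k[t]})$. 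Under this identification $\mathcal{E}nd(\sC)$ is precisely those functors whose underlying $k$-module-valued functor is representable, i.e. lands in $\sC \subset \widehat{\sC}$ after forgetting the $t$-action. The functor $-\otimes_k \mbf{Perf}_{k[t,t^{-1}]}$ corresponds to base change along $k[t] \to k[t,t^{-1}]$, i.e. to inverting $t$ sectionwise; so the kernel $\mrm{Ker}(\sC \otimes_k \mbf{Perf}_{k[t]} \to \sC \otimes_k \mbf{Perf}_{k[t,t^{-1}]})$ consists of those compact $k[t]$-linear functors $F : \sC^{\mrm{op}} \to \mbf{Mod}_{k[t]}$ with $F \otimes_{k[t]} k[t,t^{-1}] \simeq 0$, equivalently $F(X)[t^{-1}] \simeq 0$ for all $X$.

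Next I would check that this kernel already lands inside $\mathcal{E}nd(\sC)$, i.e. that such an $F$ automatically has representable underlying $k$-module functor. This is the analogue of Lemma~\ref{lem:nilp}: if $F(X)[t^{-1}] = 0$ for all $X$ then, using that $F$ is a compact object of the functor category and arguing as in Lemma~\ref{lem:nilp} with the fiber sequences $\Fib(F \xrightarrow{t^n} F) \to F \xrightarrow{t} F$, one gets $t^n$ acting by zero on $F$ for some $n$ (the compactness of $F$ lets one choose a single $n$ that works). Then $F$, viewed as a $k[t]/t^n$-linear and hence iteratively $k$-linear functor, is a finite extension (via the $t$-adic filtration with graded pieces the $k$-linear functors $X \mapsto \mathrm{cofib}(t : t^{i+1}F(X) \to t^i F(X))$) of representable $k$-module-valued functors, hence representable — this is where one uses that $\sC$ is idempotent complete and stable so that the representable functors form a stable subcategory of $\widehat{\sC}$ closed under extensions. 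Conversely, an object $(P,\phi)$ of $\mathcal{E}nd(\sC)$ lies in $\mathcal{N}il(\sC)$ exactly when $P[\phi^{-1}] = 0$, which under the dictionary $\phi \leftrightarrow$ "multiplication by $t$" is precisely $F(X)[t^{-1}] = 0$ for the representing object, i.e. membership in the kernel. This gives the equivalence $\Psi$.

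For the second assertion, I would trace the functor $\iota_{\mrm{nil}} : \sC \to \mathcal{N}il(\sC)$, $P \mapsto (P,0)$, through the same chain of identifications. Sending $P$ to $(P,0)$ corresponds, in the functor-category picture, to the functor $X \mapsto \mrm{maps}_\sC(X,P)$ equipped with the zero $t$-action, i.e. with the $k[t]$-module structure obtained by restriction of scalars along the \emph{augmentation} $ev_* : \mbf{Mod}_k \to \mbf{Mod}_{k[t]}$ (which on $k[t]$-module structures means $t$ acts by $0$). Since $\sC \otimes_k - $ is functorial and $ev_*$ is $\mbf{Perf}_k$-linear, $\sC \otimes_k ev_* : \sC \otimes_k \mbf{Perf}_k \to \sC \otimes_k \mbf{Perf}_{k[t]}$ is exactly the functor sending $\sC \otimes_k \mbf{Perf}_k \simeq \sC$ to the zero-action endomorphisms, which is $\iota_{\mrm{nil}}$ composed with $\Psi$. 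One should note $ev_* = ev^*$ really is the restriction along the evaluation/augmentation $k[t] \to k$ regarded as going the other way on module categories; unwinding that $ev_* $ of the rank-one free module is $k$ with trivial $t$-action closes the loop.

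The main obstacle I expect is the converse inclusion in the first part: showing that an abstract compact object of $\mrm{Fun}_k^{\mrm{ex}}(\sC^{\mrm{op}}, \mbf{Mod}_{k[t]})$ killed by $t^{-1}$-inversion is genuinely an \emph{endomorphism}, i.e. has representable underlying $k$-module functor. The subtlety is that compactness is taken in the $k[t]$-linear functor category, and one must leverage it both to extract a uniform nilpotence exponent $n$ and then to see that the $t$-adically filtered pieces are representable and the extension stays within $\sC$; getting the idempotent-completeness hypothesis on $\sC$ to do its job here is the delicate point. Everything else — the monoidal bookkeeping identifying base change along $k[t] \to k[t,t^{-1}]$ with sectionwise $t$-inversion, and the identification of $\iota_{\mrm{nil}}$ — is formal manipulation of the adjunctions in \ssecref{ssec:sym_mon} and Lemma~\ref{lem:dual_desc}.
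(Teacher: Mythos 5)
Your reduction of the first assertion to the following concrete claim is correct and agrees with the paper: a compact object $F$ of $\sC \otimes_k \mbf{Perf}_{k[t]} \simeq \mrm{Fun}^{\mrm{ex}}_k(\sC^{\mrm{op}},\mbf{Mod}_{k[t]})^\omega$ with $F(X)[t^{-1}]\simeq 0$ for all $X$ must have representable underlying $k$-linear functor; and your extraction of a uniform $n$ with $t^n\simeq 0$ from compactness of $F$ (adapting Lemma~\ref{lem:nilp}) is also how the paper proceeds. The gap is in the passage from nilpotence to representability. The ``$t$-adic filtration with graded pieces $X\mapsto \mathrm{cofib}(t\colon t^{i+1}F(X)\to t^iF(X))$'' is not available in this setting: a stable $\infty$-category has no images, so the objects $t^iF(X)$ are not defined, and a null-homotopy of $t^n$ does not promote $F$ to a $k[t]/(t^n)$-linear functor. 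More seriously, even granting some finite tower, you simply assert that its graded pieces are representable; that assertion is exactly the point at issue (it is the special case where $t$ acts by zero), so the argument as written is circular --- closure of the representables under extensions cannot get started without the base case, and you give no argument for it.

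The paper closes precisely this point by a different device, which your outline is missing: push forward along the finite free map $k[t]\xrightarrow{\,t\mapsto t^n\,}k[t]$. This induces an endofunctor of $\sC\otimes_k\mbf{Perf}_{k[t]}$, hence preserves compactness, and it sends $(P,\phi)$ to $(P,\phi^n)\simeq(P,0)$; one is thereby reduced to the case of the zero endomorphism, where the colimit-preserving functor $\widehat{\sC}\to\mrm{Fun}(B\mathbb{N},\widehat{\sC})$, $X\mapsto(X,0)$, reflects compact objects, so $P\in\sC$. Inserting this reduction (or an independent proof of the zero-endomorphism case together with an honest finite tower built from the fiber sequences $\Fib(\phi^i)\to F\xrightarrow{\phi^i}F$) is what your step needs. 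The remainder of your proposal --- identifying the kernel with the sectionwise $t$-inversion-acyclic functors, and identifying $\iota_{\mrm{nil}}$ with $\sC\otimes_k ev_*$ (note that $ev_*$ is restriction of scalars along $k[t]\to k$, i.e.\ ``$t$ acts by zero,'' and should not be conflated with $ev^*$) --- is formal and consistent with the paper, which indeed dispatches the second assertion as following from the construction.
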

\begin{proof}
$\Psi$ is fully faithful by construction, so we need to show that for any 
\[
(P, \phi) \in (\mrm{Fun}^\mrm{ex}(B\mathbb{N}, \widehat{\sC}))^\omega \simeq \sC \otimes_k \mbf{Perf}_{k[t]}
\]
such that $P[\phi^{-1}] \simeq 0$ we have that $P$ belongs to $\sC= (\widehat{\sC})^\omega$. 

First assume that $\phi \simeq 0$. 
Consider the functor 
\[
\widehat{\sC} \to \mrm{Fun}^\mrm{ex}(B\mathbb{N}, \widehat{\sC})
\]
\[
X \mapsto (X,0).
\]
It preserves colimits, hence its right adjoint functor 
\[
\mrm{Fun}^\mrm{ex}(B\mathbb{N}, \widehat{\sC}) \to \widehat{\sC}
\]
\[
(X,\phi) \mapsto \Cofib(\phi)
\]
preserves compact objects. Hence 
$\Cofib(P \stackrel{0}\to P) = P \oplus \Sigma P$ is compact, and so is $P$, as its retract.

In general, by Lemma~\ref{lem:nilp} there exists an $n\in \mathbb{N}$ such that $\phi^n \simeq 0$. 
The pushforward along the map of rings $t \mapsto t^n$ yields a functor  
\[
\mbf{Perf}_{k[t]} \to \mbf{Perf}_{k[t]}.
\]
This gives a functor 
\[
\sC \otimes_k \mbf{Perf}_{k[t]} \to \sC \otimes_k \mbf{Perf}_{k[t]} 
\]
which sends $(P,\phi)$ to $(P,\phi^n) \simeq (P,0)$. 
By the above reasoning, $P$ belongs to $\sC$.

The second part of the claim follows from the construction.
\end{proof}

\begin{cor}\label{cor:nil_vs_nilk}
There is an equivalence
$\mathcal{N}il(\sC) \simeq \sC \otimes_k \mathcal{N}il(\mbf{Perf}_{k})$.
\end{cor}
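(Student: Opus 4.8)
The plan is to bootstrap from \propref{prop:role_of_nil} together with the fact that the kernel functor commutes with tensoring by a fixed idempotent complete stable $\infty$-category. First I would invoke \propref{prop:role_of_nil} in the case $\sC = \mbf{Perf}_k$ to get an equivalence
\[
\mathcal{N}il(\mbf{Perf}_k) \simeq \mrm{Ker}(\mbf{Perf}_{k[t]} \to \mbf{Perf}_{k[t,t^{-1}]}),
\]
and in the general case to get
\[
\mathcal{N}il(\sC) \simeq \mrm{Ker}(\sC \otimes_k \mbf{Perf}_{k[t]} \to \sC \otimes_k \mbf{Perf}_{k[t,t^{-1}]}).
\]
So it suffices to identify the latter kernel with $\sC \otimes_k \mrm{Ker}(\mbf{Perf}_{k[t]} \to \mbf{Perf}_{k[t,t^{-1}]})$.

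The key step is therefore: for a fixed $\sC \in \mrm{Cat}^{\mrm{perf}}_{\infty,k}$, the functor $\sC \otimes_k -$ preserves the kernel of the localization functor $\mbf{Perf}_{k[t]} \to \mbf{Perf}_{k[t,t^{-1}]}$. This follows from \lemref{lem:tensor_prod_ex}: the map $\mbf{Perf}_{k[t]} \to \mbf{Perf}_{k[t,t^{-1}]}$ is (the idempotent completion of) the Verdier quotient by the kernel $\mathcal{K} := \mrm{Ker}(\mbf{Perf}_{k[t]} \to \mbf{Perf}_{k[t,t^{-1}]})$, i.e.\ we have an exact sequence
\[
\mathcal{K} \to \mbf{Perf}_{k[t]} \to \mbf{Perf}_{k[t,t^{-1}]}
\]
in $\mrm{Cat}^{\mrm{perf}}_{\infty,k}$. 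Applying $\sC \otimes_k -$, which preserves exact sequences by \lemref{lem:tensor_prod_ex}, yields an exact sequence
\[
\sC \otimes_k \mathcal{K} \to \sC \otimes_k \mbf{Perf}_{k[t]} \to \sC \otimes_k \mbf{Perf}_{k[t,t^{-1}]},
\]
so $\sC \otimes_k \mathcal{K}$ is the kernel of $\sC \otimes_k \mbf{Perf}_{k[t]} \to \sC \otimes_k \mbf{Perf}_{k[t,t^{-1}]}$ (the first functor is fully faithful and its essential image, being idempotent complete and equal to the kernel of the quotient, is precisely that kernel). Combining with the two instances of \propref{prop:role_of_nil} gives the desired chain
\[
\mathcal{N}il(\sC) \simeq \sC \otimes_k \mathcal{K} \simeq \sC \otimes_k \mathcal{N}il(\mbf{Perf}_k).
\]

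I expect the only genuine subtlety to be bookkeeping about idempotent completeness: the functor $\sC \otimes_k \mbf{Perf}_{k[t]} \to \sC \otimes_k \mbf{Perf}_{k[t,t^{-1}]}$ is a Verdier quotient only up to idempotent completion, so one must check that its kernel (computed in $\mrm{Cat}^{\mrm{perf}}_{\infty,k}$, i.e.\ among idempotent complete categories) is still $\sC \otimes_k \mathcal{K}$ rather than a non-complete version thereof. This is handled by the standard fact that the kernel of an idempotent-complete-up-to-completion quotient agrees with the kernel of the genuine quotient after idempotent completion, together with the fact that $\sC \otimes_k \mathcal{K}$ is already idempotent complete since it lies in $\mrm{Cat}^{\mrm{perf}}_{\infty,k}$. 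One should also double-check naturality: the equivalence should be compatible with the inclusions $\iota_{\mrm{nil}}$, which by the second part of \propref{prop:role_of_nil} correspond on both sides to $\sC \otimes_k (-)$ applied to the restriction-of-scalars functor along $ev_*$; this is immediate from functoriality of $\sC \otimes_k -$.
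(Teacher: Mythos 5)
Your proposal is correct and takes essentially the same route as the paper: identify $\mathcal{N}il(\mbf{Perf}_k)$ as the kernel via \propref{prop:role_of_nil}, tensor the resulting exact sequence $\mathcal{N}il(\mbf{Perf}_{k}) \to \mbf{Perf}_{k[t]} \to \mbf{Perf}_{k[t,t^{-1}]}$ with $\sC$ using \lemref{lem:tensor_prod_ex}, and apply \propref{prop:role_of_nil} again to recognize the kernel of the tensored sequence as $\mathcal{N}il(\sC)$. The additional bookkeeping you flag about idempotent completeness and naturality is harmless but already subsumed in the cited statements.
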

\begin{proof}
By Proposition~\ref{prop:role_of_nil} we have an exact sequence 
\[
\mathcal{N}il(\mbf{Perf}_{k}) \to \mbf{Perf}_{k[t]} \to \mbf{Perf}_{k[t,t^{-1}]}
\]
which by Lemma~\ref{lem:tensor_prod_ex} induces an exact sequence
\[
\sC \otimes_k \mathcal{N}il(\mbf{Perf}_{k}) \to \sC \otimes_k \mbf{Perf}_{k[t]} \to \sC \otimes_k 
\mbf{Perf}_{k[t,t^{-1}]}.
\]
Applying Proposition~\ref{prop:role_of_nil} again we get the desired equivalence. 
\end{proof}

\begin{prop}\label{prop:NE=ENil}
For a $k$-localizing invariant $E$ we have a natural equivalence
\[
\Omega NE(\sC) \simeq E(\mathcal{N}il_0(\sC))
\]
of functors $\mrm{Cat}^{\mrm{perf}}_{\infty, k} \to \Spt$.
%which is a natural transformation of functors $\on{CAlg}_k \to \Spt$.
\end{prop}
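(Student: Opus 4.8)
The plan is to assemble the equivalence from three ingredients already available in the text: the identification of $NE(\sC)$ with the value of $E$ on a Verdier sequence (Proposition~\ref{prop:role_of_nil}), the exactness of $- \otimes_k \sC$ on exact sequences (Lemma~\ref{lem:tensor_prod_ex}), and the splitting provided by the functor $\iota_{\mrm{nil}}$ and its left inverse (see \ref{ssec:nil0}). First I would record that by Proposition~\ref{prop:role_of_nil} there is a Verdier sequence
\[
\mathcal{N}il(\sC) \stackrel{\Psi}\to \sC \otimes_k \mbf{Perf}_{k[t]} \to \sC \otimes_k \mbf{Perf}_{k[t,t^{-1}]},
\]
so applying the localizing invariant $E$ gives a fiber sequence
\[
E(\mathcal{N}il(\sC)) \to E(\sC \otimes_k \mbf{Perf}_{k[t]}) \to E(\sC \otimes_k \mbf{Perf}_{k[t,t^{-1}]}).
\]

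Next I would compare this with the defining fiber sequence for $NE$. There is an open-closed type decomposition: the section $\iota:\sC \to \mathcal{E}nd(\sC)$, resp.\ $\iota_{\mrm{nil}}$, admits the "underlying module" left inverse, and under $\Psi$ this corresponds, by the last sentence of Proposition~\ref{prop:role_of_nil}, to the restriction-of-scalars functor $\sC \otimes_k ev_*$ along $k[t] \to k$ and its retraction given by base change along $k \to k[t]$. Consequently $E(\mathcal{N}il(\sC)) \simeq E(\sC) \oplus E(\mathcal{N}il_0(\sC))$, where by definition (see \ref{ssec:nil0}) the summand $E(\mathcal{N}il_0(\sC))$ is the image of the idempotent $\pi:(P,\phi)\mapsto (P,\phi)-(P,0)$, i.e.\ the cofiber of $E(\iota_{\mrm{nil}})$. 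Feeding this splitting into the fiber sequence above, the $E(\sC)$ summand maps isomorphically (via the composite $\sC \otimes_k ev_*$ followed by localization $\mbf{Perf}_{k[t]} \to \mbf{Perf}_{k[t,t^{-1}]}$, which on $E$ is split by $ev_*$ again on $\mathbb{G}_m$ — here I would invoke that $E(\sC\otimes_k\mbf{Perf}_{k[t,t^{-1}]}) \simeq E(\sC) \oplus (\text{something})$, or more simply that $ev_*$ on $k[t,t^{-1}]$ is still a retraction), so passing to the complementary idempotents one gets that $E(\mathcal{N}il_0(\sC))$ is the fiber of
\[
E(\sC \otimes_k \mbf{Perf}_{k[t]})/E(\sC) \to E(\sC \otimes_k \mbf{Perf}_{k[t,t^{-1}]})/E(\sC).
\]

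Finally I would identify the target: by Proposition~\ref{prop:P1_excision} applied after tensoring with $\sC$ (valid by Lemma~\ref{lem:tensor_prod_ex}), together with Proposition~\ref{prop:pbf}, the reduced theory $E(\sC\otimes_k\mbf{Perf}_{k[t,t^{-1}]})/E(\sC)$ is $\Sigma^{-1}$ of the reduced theory $E(\sC\otimes_k\mbf{Perf}_{k[t]})/E(\sC) = NE(\sC)$; more directly, $NE(\sC)$ is by definition $\Fib(E(\sC\otimes_k\mbf{Perf}_{k[t]})\to E(\sC))$, which is exactly the fiber of the first map in the display above, so the cofiber sequence rearranges to $E(\mathcal{N}il_0(\sC)) \simeq \Omega NE(\sC)$. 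Naturality in $\sC$ is automatic since every functor and every splitting used is natural. The main obstacle I anticipate is bookkeeping the several compatible retractions (of $ev_*$ on $k[t]$ and on $k[t,t^{-1}]$, and of $\iota_{\mrm{nil}}$) so that the idempotent decompositions of all three terms in the fiber sequence are mutually compatible; once that is set up carefully the identification is a formal consequence of the fiber sequence and the defining property of $NE$.
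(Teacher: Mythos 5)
Your first step is fine: applying $E$ to the Verdier sequence $\mathcal{N}il(\sC)\to\sC\otimes_k\mbf{Perf}_{k[t]}\to\sC\otimes_k\mbf{Perf}_{k[t,t^{-1}]}$ from Proposition~\ref{prop:role_of_nil} gives a fiber sequence, and splitting off $E(\sC)$ from the two right-hand terms via the pullbacks $pr^*$ along $k\to k[t]$ and $k\to k[t,t^{-1}]$ identifies $E(\mathcal{N}il(\sC))$ with the fiber of the induced map of cofibers of $pr^*$. But your bookkeeping of the splittings then goes wrong. By the ``furthermore'' part of Proposition~\ref{prop:role_of_nil}, the section $\iota_{\mrm{nil}}$ corresponds under $\Psi$ to the \emph{pushforward} $ev_*$ (i.e.\ $i_{0,*}$), not to the pullback $pr^*$; these are different splittings, and the composite of $i_{0,*}$ with the localization to $\mbf{Perf}_{k[t,t^{-1}]}$ is zero --- that is exactly why $\iota_{\mrm{nil}}$ factors through the kernel in the first place. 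Consequently the $E(\sC)$-summand of $E(\mathcal{N}il(\sC))$ given by $\iota_{\mrm{nil}}$ is \emph{not} cancelled when you pass to the complementary idempotents of the $pr^*$-copies: the fiber of
\[
E(\sC\otimes_k\mbf{Perf}_{k[t]})/E(\sC) \to E(\sC\otimes_k\mbf{Perf}_{k[t,t^{-1}]})/E(\sC)
\]
is all of $E(\mathcal{N}il(\sC))\simeq E(\sC)\oplus E(\mathcal{N}il_0(\sC))$, not $E(\mathcal{N}il_0(\sC))$; combining your intermediate claim with the proposition itself would force $E(\sC)\simeq 0$.

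The final identification also does not hold as stated: the reduced theory of $\mathbb{G}_{m,k}$ is not $\Sigma^{-1}$ of $NE(\sC)$ (already for $\sC=\mbf{Perf}_k$ with $k$ discrete regular noetherian one has $NK=0$ while the cofiber of $pr^*$ on $K(k[t,t^{-1}])$ is $\Sigma K(k)\neq 0$), and nothing in your sketch actually produces the desuspension appearing in $\Omega NE(\sC)$. That shift is precisely where the $\mathbb{P}^1$-input must enter: the paper's proof compares the fiber sequence for perfect complexes on $\mathbb{P}^1_k$ supported at $\{0\}$ with the sequence over $\mathbb{A}^1_k$ you wrote down, uses Proposition~\ref{prop:P1_excision} to identify the two fibers, identifies $\iota_{\mrm{nil}}$ with $i_{0,*}$, and then feeds the projective bundle formula of Proposition~\ref{prop:pbf} (the equivalence $E(\sC)\oplus E(\sC)\isoto E(\sC\otimes_k\mbf{Perf}_{\mathbb{P}^1_k})$ via $i_{0,*}$ and $pr^*_{\mathbb{P}^1}$) into a three-by-three diagram whose middle column has vanishing cofiber; it is this vanishing that converts the cofiber of $\iota_{\mrm{nil}}$ into a loop of $NE(\sC)$. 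You cite Propositions~\ref{prop:P1_excision} and~\ref{prop:pbf}, but the way you invoke them (as a suspension relation between the reduced $\mathbb{A}^1$- and $\mathbb{G}_m$-theories) is incorrect, so the essential delooping step is missing from your argument.
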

\begin{proof}
Denote the projection maps  $\mathbb{P}^1_k \to \mrm{Spec}(k)$  and $\mathbb{A}^1_k \to \mrm{Spec}(k)$ by $pr_{\mathbb{P}^1}$ and $pr_{\mathbb{A}^1}$, respectively. 
%Denote the coordinate sections of $\mathcal{O}_{\mathbb{P}^1_R}(1)$ by $x$ and $y$. 
We have maps given by embedding $0$ and its complement into $\mathbb{P}^1_k$:
\[
\mrm{Spec}(k) \stackrel{i_0} \to \mathbb{P}^1_k \stackrel{j_{0}}\leftarrow  \mathbb{A}^1_k. 
\]
We also have an embedding given by the complement to the section at infinity: 
\[
\mathbb{A}^1_k \stackrel{j_\infty}\to \mathbb{P}^1_k.
\]
Consider the following diagram:
\[
\begin{tikzcd}
E(\sC \otimes_k \mbf{Perf}_{\mathbb{P}_k^1, \{0\}}) \arrow[d,"j"]\arrow[r]& E(\sC \otimes_k \mbf{Perf}_{\mathbb{P}_k^1}) \arrow[d, "j_{\infty}^*"]\arrow[r,"j_0^*"] & E(\sC \otimes_k \mbf{Perf}_{\mathbb{A}_k^1})\arrow[d]\\
E(\mathcal{N}il(\sC)) \arrow[r] & E(\sC \otimes_k \mbf{Perf}_{\mathbb{A}_k^1}) \arrow[r]          & E(\sC \otimes_k \mbf{Perf}_{\mathbb{G}_{m,k}}).
\end{tikzcd}
\]
By Proposition~\ref{prop:role_of_nil} the rows in diagram are fiber sequences. 
By Lemma~\ref{lem:tensor_prod_ex} $E(\sC \otimes_k -)$ is a $k$-localizing invariant, so the right square is a pullback square by Proposition~\ref{prop:P1_excision}. This implies that the 
left vertical map $j$ is an equivalence. 
It suffices to construct a horizontal map in the commutative diagram 
\[
\begin{tikzcd}
E(\sC) \arrow[dotted, r]\arrow[rd, swap, "\iota_{nil}"] & E(\sC \otimes_k \mbf{Perf}_{\mathbb{P}_k^1, \{0\}})\arrow[d, "j"]\\
& E(\mathcal{N}il(\sC))
\end{tikzcd}
\]
whose cofiber is $\Omega NE(\sC)$. 
It follows from the ``furthermore'' part of Proposition~\ref{prop:role_of_nil} that the direct image 
functor\footnote{note that $i_{0,*}$ is well defined because $i_0$ is a regular embedding}
$i_{0,*}:\sC\otimes_k \mbf{Perf}_k \to \sC\otimes_k \mbf{Perf}_{\mathbb{P}^1_k}$ makes the diagram commutative. So we only need to show that its cofiber $C$ is naturally equivalent to $\Omega NE(\sC)$.

The projective bundle formula of Proposition~\ref{prop:pbf} yields that 
the map 
\[
E(\sC\otimes_k \mbf{Perf}_k) \oplus E(\sC\otimes_k \mbf{Perf}_k) \stackrel{\begin{pmatrix}i_{0,*} & pr^*_{\mathbb{P}^1} \end{pmatrix}}\to E(\sC \otimes_k \mbf{Perf}_{\mathbb{P}^1_k})
\]
is an equivalence. 
To finish the proof, observe that the following diagram is commutative
\[
\begin{tikzcd}[ampersand replacement=\&]
E(\sC \otimes_k \mbf{Perf}_k)\arrow{r}{\begin{pmatrix} 1\\ 0\end{pmatrix}}\arrow[d,"i_{0,*}"] \& 
E(\sC \otimes_k \mbf{Perf}_k) \oplus E(\sC \otimes_k \mbf{Perf}_k)\arrow{r}{\begin{pmatrix} 0 & 1\end{pmatrix}}\arrow{d}{\begin{pmatrix} i_{0,*} & pr_{\mathbb{P}^1}^* \end{pmatrix}} \&
E(\sC \otimes_k \mbf{Perf}_k)\arrow[d,"pr^*_{\mathbb{A}^1}"]\\
E(\sC \otimes_k \mbf{Perf}_{\mathbb{P}^1_k, \{0\}})\arrow[r]\arrow[d] \&
E(\sC \otimes_k \mbf{Perf}_{\mathbb{P}^1_k})\arrow[r, "j_0^*"]\arrow[d] \&
E(\sC \otimes_k \mbf{Perf}_{\mathbb{A}^1_k})\arrow[d]\\
C \arrow[r] \& 0 \arrow[r] \& NE(\sC)
\end{tikzcd}
\]
and all its rows and columns are fiber sequences.
\end{proof}

\begin{cor}\label{cor:action_W0}
Let $E$ be any $k$-localizing invariant and $\sC$ be a small $k$-linear stable $\infty$-category. 
Then $\pi_iNE(\sC)$ admits an action of the ring $K_0(\mathcal{E}nd_0(\mbf{Perf}_k))$ for any $i\in \mathbb{Z}$.  
%
%
%If $E$ is finitary then this action is continuous and, in particular, it factors through an action of 
%$\mathbb{W}(R) \simeq K_0(\mathcal{E}nd_0(R))^{\hat{}}_t$.
%
%
\end{cor}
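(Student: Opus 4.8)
Looking at Corollary \ref{cor:action_W0}, I need to establish that $\pi_i NE(\sC)$ carries an action of the ring $K_0(\mathcal{E}nd_0(\mbf{Perf}_k))$.

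The plan is to combine the identification $\Omega NE(\sC) \simeq E(\mathcal{N}il_0(\sC))$ from Proposition \ref{prop:NE=ENil} with the tensor action $\otimes: \mathcal{E}nd(\sC) \times \mathcal{E}nd(\mbf{Perf}_k) \to \mathcal{E}nd(\sC)$ constructed in \ref{ssec:action}, which restricts to an action on $\mathcal{N}il(\sC)$. First I would note that, by Proposition \ref{prop:NE=ENil}, $\pi_{i+1} NE(\sC) \simeq \pi_i E(\mathcal{N}il_0(\sC))$, so it suffices to equip $\pi_i E(\mathcal{N}il_0(\sC))$ with an action of $K_0(\mathcal{E}nd_0(\mbf{Perf}_k))$ for every $i$. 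For this, I would observe that the biexact pairing $\otimes: \mathcal{N}il(\sC) \times \mathcal{E}nd(\mbf{Perf}_k) \to \mathcal{N}il(\sC)$, being exact in the second variable for each fixed object of the first, induces for each object $(Q,\psi) \in \mathcal{E}nd(\mbf{Perf}_k)$ an exact endofunctor $-\otimes (Q,\psi)$ of $\mathcal{N}il(\sC)$, hence an endomorphism of the spectrum $E(\mathcal{N}il(\sC))$; and by the additivity theorem (\ref{rem:additivity}) this assignment is additive in $(Q,\psi)$, so it factors through $K_0(\mathcal{E}nd(\mbf{Perf}_k))$ acting on $\pi_i E(\mathcal{N}il(\sC))$.

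Next I would check that this action is compatible with the idempotent $\pi$ cutting out the summand $\mathcal{N}il_0(\sC)$: since $\pi$ on the $\mathcal{N}il(\sC)$-side is $(P,\phi) \mapsto (P,\phi) - (P,0)$ and tensoring commutes with the functor $\iota_{\mathrm{nil}}$ (as $(P,0)\otimes(Q,\psi) = (P\otimes Q, 0)$), the action descends to $\pi_i E(\mathcal{N}il_0(\sC))$ and factors through the quotient ring $K_0(\mathcal{E}nd_0(\mbf{Perf}_k)) = K_0(\mathcal{E}nd(\mbf{Perf}_k))/\mathcal{I}_{\mathrm{triv}}$, because the ideal $\mathcal{I}_{\mathrm{triv}}$ is generated by classes of the form $(Q,0)$, which act by zero on $\mathcal{N}il_0(\sC)$. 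Finally, I would verify the ring-action axioms: unitality follows from $(P,\phi)\otimes (k,1) \simeq (P,\phi)$, and associativity/multiplicativity from the symmetric monoidal structure on $\mathcal{E}nd(\mbf{Perf}_k)$ together with the fact that $E$ sends compositions of functors to compositions of maps on homotopy groups.

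The main obstacle, I expect, will be carefully pinning down that the assignment $(Q,\psi) \mapsto [\text{endofunctor } -\otimes(Q,\psi) \text{ of } E(\mathcal{N}il(\sC))]$ genuinely respects the ring multiplication on $K_0$ at the level of homotopy groups—i.e. that $E$ applied to the composite $(-\otimes(Q,\psi))\circ(-\otimes(Q',\psi'))$ agrees with $E(-\otimes((Q,\psi)\otimes(Q',\psi')))$—and that additivity genuinely lets one pass from objects of $\mathcal{E}nd(\mbf{Perf}_k)$ to classes in $K_0$. The latter is the standard argument that any biexact pairing into a category on which $E$ is defined induces a $K_0$-action on $\pi_* E$; the former is formal once one invokes that $-\otimes-$ is a symmetric monoidal pairing. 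I would also need to be slightly careful that the action is defined for an arbitrary, not necessarily finitary, $E$; but since everything here is phrased in terms of exact functors between fixed small stable $\infty$-categories and the additivity theorem holds for all localizing invariants, no finitariness is needed.
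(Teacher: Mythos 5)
Your proposal is correct and follows essentially the same route as the paper: identify $\pi_{i+1}NE(\sC)$ with $\pi_iE(\mathcal{N}il_0(\sC))$ via Proposition~\ref{prop:NE=ENil}, let $\mathcal{E}nd(\mbf{Perf}_k)$ act through the biexact tensor pairing of \ref{ssec:action}, use additivity to descend to $K_0(\mathcal{E}nd(\mbf{Perf}_k))$, and then check compatibility with the idempotent of \ref{ssec:nil0} and that $\mathcal{I}_{\mathrm{triv}}$ acts by zero so the action factors through $K_0(\mathcal{E}nd_0(\mbf{Perf}_k))$. The points you flag as potential obstacles (multiplicativity on $K_0$ and independence of finitariness) are handled in the paper exactly as you suggest, via the monoidal structure of the pairing and the fact that additivity holds for arbitrary localizing invariants.
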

\begin{proof}
By Proposition~\ref{prop:NE=ENil} $\pi_iNE(\sC) \simeq E_{i-1}(\mathcal{N}il_0(\sC))$. 
Thus it suffices to construct the action of $K_0(\mathcal{E}nd_0(\mbf{Perf}_k))$ on 
$\pi_iE(\mathcal{N}il_0(\sC))$.
The action of $\mathcal{E}nd(\mbf{Perf}_k)$ on $\mathcal{N}il(\sC)$ (see \ref{ssec:action}) 
\[
\mathcal{E}nd(\mbf{Perf}_k) \otimes_k \mathcal{N}il(\sC)\to \mathcal{N}il(\sC)
\]
by adjunction yields a functor
\[
\mathcal{E}nd(\mbf{Perf}_k)^{\simeq} \to \Fun^{\mrm{ex}}(\mathcal{N}il(\sC),\mathcal{N}il(\sC))^{\simeq}= \mrm{End}_{\mrm{Cat}^{\mrm{perf}}_{\infty, k}}(\mathcal{N}il(\sC)) \to \mrm{End}_{\mathbb{Z}}(\pi_iE(\mathcal{N}il(\sC))).
\]
By additivity theorem this descents into a map
\[
K_0(\mathcal{E}nd(\mbf{Perf}_k)) \to \mrm{End}_{\mathbb{Z}}(\pi_iE(\mathcal{N}il(\sC)))
\]
which is a map of rings by construction. 
This action commutes with the idempotent given by sending $(P,\phi) \in \mathcal{N}il(\sC)$ to $(P,0)$ (see \ref{ssec:nil0}), so we 
in fact have a map of rings 
\[
K_0(\mathcal{E}nd(\mbf{Perf}_k)) \to \mrm{End}_{\mathbb{Z}}(\pi_iE(\mathcal{N}il_0(\sC))).
\]
Finally, the ideal $\mathcal{I}_{\on{triv}}$ is sent to $0$, so we obtain the desired action
\[
K_0(\mathcal{E}nd_0(\mbf{Perf}_k)) \to \mrm{End}_{\mathbb{Z}}(\pi_iE(\mathcal{N}il_0(\sC))).
\]

%
%
%To prove the second claim note that 
%\[
%\mathcal{N}il_0(R) = \colim\limits_N \mathcal{N}il_0(R)^{\le N},
%\]
%where $\mathcal{N}il_0(R)^{\le N}$ denotes the stable subcategory generated by pairs $(P,\phi)$, where 
%$\phi$ is nilpotent of degree $N$. 
%For any $(M,\psi) \in I_N \subset K_0(\mathcal{E}nd_{0}(R))$ we will prove that the multiplication
%\[
%E_0(\mathcal{N}il_0(R)^{\le N}) \to E_0(\mathcal{N}il_0(R))
%\]
%is the zero map. 
%By assumption $E_0(\mathcal{N}il_0(R)) \simeq \colim\limits_N E_0(\mathcal{N}il_0(R)^{\le N})$, so this will 
%imply that the action of $K_0(\mathcal{E}nd_0(R))$ on $E_0(\mathcal{N}il_0(R))$ is continuous. 
%
%
\end{proof}

\section{Witt vectors}\label{section:characteristic polynomial}
Assume $k$ is a discrete ring. One can check that the map 
\[
\mathcal{E}nd(k) \to (1 + tk[\![t]\!])^{\times}
\]
given by sending a pair $(P,\phi) \in \mathcal{E}nd(k)$ to its {\bf characteristic polynomial} $\mrm{det}(1-t\phi)$
is a group homomorphism. Here $\mathcal{E}nd(k)$ denotes the exact category of endomorphisms on finitely 
generated projective $k$-modules. 
In fact, the codomain of the map can be endowed with a ring structure in such a way 
that this map 
is also a ring homomorphism. This ring is called the 
{\bf ring of Witt vectors} and is denoted by $\mathbb{W}(k)$. A trivial endomorphism has 
trivial characteristic polynomial, so we end up having a ring homomorphism
\[
K_0(\mathcal{E}nd_0(k)) \to \mathbb{W}(k).
\] %ref?
The image of this map consists of fractions of the form
\[
\frac{1 + a_1t + \cdots a_nt^n}{1 + b_1t + \cdots b_lt^l}.
\]
We call the subring spanned by the image the {\bf ring of rational Witt 
vectors} and denote by $\mathbb{W}_0(k)$. 

If $k$ is an arbitrary connective ring spectrum and $\mathcal{E}nd(\mbf{Perf}_k)$ is considered instead of 
$\mathcal{E}nd(k)$, the same story goes through, except that now the 
characteristic polynomial lands in power series over $\pi_0(k)$. 
The following is, in essence, a classical result of Almkvist \cite{ALMKVIST1974375}. As stated, however, it 
follows from \cite[Theorem~1.12]{blumberg2015witt} (see also \cite[Theorem~1.9]{blumberg2015witt}):

\begin{thm}[Almkvist]\label{thm:char_polynomial}\label{thm:WittKorelation}
The characteristic polynomial map 
\[
K_0(\mathcal{E}nd_0(\mbf{Perf}_k)) \to \mathbb{W}(\pi_0 (k))
\]
is an isomorphism onto its image $\mathbb{W}_0(\pi_0 (k))$. Moreover, it becomes an isomorphism after completing with respect to the topology defined by preimages 
of the ideals $(1 + t^N\pi_0(k)[\![t]\!])$. 
\end{thm}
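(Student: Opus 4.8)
The plan is to reduce the statement to a purely ring-theoretic fact about the characteristic polynomial map on the exact category $\mathcal{E}nd(\pi_0(k))$ of endomorphisms of finitely generated projective $\pi_0(k)$-modules, and then invoke the cited results of Blumberg--Gepner--Mandell together with Almkvist's classical computation. First I would observe that $K_0(\mathcal{E}nd_0(\mbf{Perf}_k))$ depends only on $\pi_0(k)$: since $K_0$ of a stable $\infty$-category depends only on its triangulated homotopy category, and the homotopy category of $\mbf{Perf}_k$ together with its endomorphisms is controlled by $\pi_0$ at the level of $K_0$ (indeed $K_0(\mbf{Perf}_k) \simeq K_0(\pi_0(k))$ and likewise $K_0(\mathcal{E}nd(\mbf{Perf}_k)) \simeq K_0(\mathcal{E}nd(\pi_0(k)))$, the latter being the $K_0$ of the exact category of endomorphisms of finitely generated projective $\pi_0(k)$-modules). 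This is the content of \cite[Theorem~1.12]{blumberg2015witt}, which identifies the relevant $K_0$ with $\mathbb{W}(\pi_0(k))$, the image being exactly the rational Witt vectors $\mathbb{W}_0(\pi_0(k))$ spanned by fractions $(1+a_1t+\cdots+a_nt^n)/(1+b_1t+\cdots+b_lt^l)$.

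Next I would address injectivity of the characteristic polynomial map. The key point is that a class in $K_0(\mathcal{E}nd_0)$ is detected by its characteristic polynomial: this is Almkvist's theorem \cite{ALMKVIST1974375}, which says precisely that $(P,\phi)\mapsto \det(1-t\phi)$ induces a ring isomorphism from $K_0(\mathcal{E}nd_0(\pi_0(k)))$ onto $\mathbb{W}_0(\pi_0(k))$. Surjectivity onto $\mathbb{W}_0$ is essentially by definition of $\mathbb{W}_0$ as the subring spanned by the image, once one checks that the image is already closed under the Witt ring operations---which follows because sums of endomorphisms go to products of characteristic polynomials and tensor products realize the Witt multiplication, both verified directly on the generators.

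For the ``moreover'' part, I would argue that the characteristic polynomial map is compatible with the two filtrations: on the left, the filtration by preimages of the ideals $(1+t^N\pi_0(k)[\![t]\!])$; on the right, the $t$-adic filtration on $\mathbb{W}(\pi_0(k))=(1+t\pi_0(k)[\![t]\!])^\times$ by the subgroups $1+t^N\pi_0(k)[\![t]\!]$. The map is by construction filtered, hence induces a map on completions. Since the source is already identified with $\mathbb{W}_0(\pi_0(k))$ sitting inside $\mathbb{W}(\pi_0(k))$ compatibly with filtrations, it suffices to note that $\mathbb{W}_0(\pi_0(k))$ is \emph{dense} in $\mathbb{W}(\pi_0(k))$ for the $t$-adic topology: every power series $1 + c_1 t + c_2 t^2 + \cdots$ is, modulo $t^N$, a polynomial $1 + c_1 t + \cdots + c_{N-1}t^{N-1}$, which is visibly a rational Witt vector. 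Density plus the fact that $\mathbb{W}(\pi_0(k))$ is $t$-adically complete and separated gives that the completion of $\mathbb{W}_0(\pi_0(k))$ is $\mathbb{W}(\pi_0(k))$, as claimed. This last density/completeness argument is the main technical point, though it is elementary; the genuinely substantive input is Almkvist's injectivity statement, which I would simply cite rather than reprove.
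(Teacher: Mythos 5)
Your proposal matches the paper's treatment: the paper offers no independent proof of this theorem, deducing it directly from Almkvist's classical computation \cite{ALMKVIST1974375} together with \cite[Theorems~1.9 and 1.12]{blumberg2015witt}, which are exactly the inputs you cite, and your added density-plus-completeness argument identifying the completion of $\mathbb{W}_0(\pi_0(k))$ with $\mathbb{W}(\pi_0(k))$ is correct and elementary. One small caveat: the reduction from $\mbf{Perf}_k$ to endomorphisms of projective $\pi_0(k)$-modules is not a formal consequence of $K_0$ depending only on the triangulated homotopy category, but since you ultimately defer that identification to the cited theorem, nothing is lost.
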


\begin{exam}
$\mathbb{W}(\mathbb{F}_p)$ is isomorphic to the countable product of copies of $\mathbb{Z}_p$ (see 
\cite[Proposition~10]{HesselholtLectureNO}).
\end{exam}

One of the key properties of the ring of Witt vectors is the following:

\begin{prop}\label{prop:invert_witt}
Assume $R$ is a discrete $\mathbb{Z}[\frac{1}{l}]$-algebra for $l \in \mathbb{Z}$. 
Then $l$ is also invertible in $\mathbb{W}(R)$.
\end{prop}
\begin{proof}
See \cite[Proposition~1.2]{weibel-nk}.
%Define $a_i \in \pi_0(k)$ inductively:
%\[
%a_0 = 1, a_1 = -\frac{1}{l},
%\]
%\[
%a_n = -\frac{1}{l}(\sum\limits_{i_1+\cdots + i_l=n, i_k<n} a_{i_1}\ldots  a_{i_l}).
%\]
%Now it is tedious to check that the power series 
%\[
%\mbf{a} = \sum\limits_{i\in \mathbb{N}} a_it^i
%\]
%belongs to $\mathbb{W}(\pi_0(k))$ and satisfies
%\[
%\mbf{a}^l = (1-t).
%\]
\end{proof}

\ssec{Verschiebung and Frobenius}
Any element $x$ of the ring $\mathbb{W}(\pi_0(k))$ 
 can be represented as an infinite product 
\[
\prod\limits_{i\in\mathbb{N}}(1 - a_it).
\]
We define 
\[
V_n(1-at) = 1-at^n
\]
\[
F_n(1-at) = 1-a^nt.
\]
By additivity and continuity this extends to natural additive Verschiebung and Frobenius operations
\[
V_n, F_n : \mathbb{W}(\pi_0(k)) \to \mathbb{W}(\pi_0(k))
\]
for all $n\in \mathbb{N}$ (cf. \cite{Grayson78Witt}).

These operations have been studied thoroughly over the past century and they have been shown to satisfy many 
interesting properties (see \cite{HesselholtLectureNO}, \cite{stienstra1982operations}). %cite more
These operations restrict to the subring $\mathbb{W}_0(\pi_0(k))$ and we use the same notation for the 
restricted operations. 
For our purposes the most important property is the following:

%\begin{enumerate}
%\item 
%$F_n$ is a ring a homomorphism, $V_n$ is additive;
%\item
%$V_1=F_1 = \id$
%\item
%$F_nV_n(x) = nx$;
%\item
%$V_pF_p(x) = px$ if $R$ is of characteristic $p$;
%\item
%$F_mV_n = V_nF_m$ if $(n,m)=1$.
%\end{enumerate}

\begin{lem}\label{lem:verfrob}
Assume $R$ is a discrete $\mathbb{F}_p$-algebra. 
Then the equality 
\[
V_{p^l}F_{p^l}(x) = p^lx
\]
holds for all $x \in \mathbb{W}(R)$.
\end{lem}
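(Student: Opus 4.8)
The plan is to reduce everything to one classical identity in the big Witt ring that holds over any base ring, namely $V_{n}F_{n}(x)=V_{n}(1_{\mathbb{W}})\cdot x$ (a special case of the Witt-vector projection formula), and then to insert the only input that uses characteristic $p$, the elementary computation $(1-t)^{p^{l}}=1-t^{p^{l}}$ in $R[\![t]\!]$. I will apply these with $n=p^{l}$; I read the two exponents in the statement as equal ($i=l$), since already on $x=1-at$ one sees directly that $V_{p^{i}}F_{p^{l}}(1-at)=1-a^{p^{l}}t^{p^{i}}$ whereas $p^{l}(1-at)=1-a^{p^{l}}t^{p^{l}}$. Throughout I use the description $\mathbb{W}(R)=(1+tR[\![t]\!])^{\times}$, in which addition is multiplication of power series — so that $m\cdot y=y^{m}$ for $m\in\mathbb{N}$ — the multiplicative unit is $1_{\mathbb{W}}=1-t$, and $V_{p^{l}}(1-t)=1-t^{p^{l}}$ by the defining formula for $V_{p^{l}}$.

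First I would record the projection formula $V_{n}\big(x\cdot F_{n}(y)\big)=V_{n}(x)\cdot y$ for arbitrary discrete $R$ (see \cite{stienstra1982operations}, \cite{HesselholtLectureNO}); taking $x=1_{\mathbb{W}}$ and using $1_{\mathbb{W}}\cdot F_{n}(y)=F_{n}(y)$ gives $V_{n}F_{n}(y)=V_{n}(1_{\mathbb{W}})\cdot y=(1-t^{n})\cdot y$ for every $y\in\mathbb{W}(R)$. Then I would use that $R$ is an $\mathbb{F}_{p}$-algebra: the coefficients $\binom{p^{l}}{k}$ vanish in $R$ for $0<k<p^{l}$ and $(-1)^{p^{l}}=-1$ (also when $p=2$), hence $(1-t)^{p^{l}}=1-t^{p^{l}}$, i.e. $p^{l}\cdot 1_{\mathbb{W}}=1-t^{p^{l}}$ in $\mathbb{W}(R)$. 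Combining, for each $x\in\mathbb{W}(R)$,
\[
V_{p^{l}}F_{p^{l}}(x)=(1-t^{p^{l}})\cdot x=\big(p^{l}\cdot 1_{\mathbb{W}}\big)\cdot x=p^{l}\cdot\big(1_{\mathbb{W}}\cdot x\big)=p^{l}x,
\]
by distributivity and because $1_{\mathbb{W}}$ is the unit; this is the claim.

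If one prefers not to invoke the projection formula, I would argue by continuity instead. As $V_{p^{l}}$, $F_{p^{l}}$ and $p^{l}\cdot(-)$ are additive and $t$-adically continuous, and every element of $\mathbb{W}(R)$ is a convergent sum $\sum_{n\ge 1}V_{n}(1-a_{n}t)$, it suffices to treat $x=V_{n}(1-at)$. Putting $p^{j}=\gcd(p^{l},n)$ and using the classical relations $F_{m}V_{n}=\gcd(m,n)\,V_{n/\gcd(m,n)}F_{m/\gcd(m,n)}$, $V_{m}V_{n}=V_{mn}$ and $F_{p^{l-j}}(1-at)=1-a^{p^{l-j}}t$, I get
\[
V_{p^{l}}F_{p^{l}}\big(V_{n}(1-at)\big)=p^{j}\,V_{np^{l-j}}\big(1-a^{p^{l-j}}t\big)=\big(1-a^{p^{l-j}}t^{np^{l-j}}\big)^{p^{j}}=1-a^{p^{l}}t^{np^{l}},
\]
the last step again by the characteristic-$p$ Frobenius, while $p^{l}V_{n}(1-at)=(1-at^{n})^{p^{l}}=1-a^{p^{l}}t^{np^{l}}$; the two sides coincide.

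I do not anticipate a genuine obstacle: once the right classical facts about the big Witt vectors are assembled, the statement is essentially formal, with characteristic $p$ entering only through $(1-t)^{p^{l}}=1-t^{p^{l}}$. The only ``work'' is locating the projection formula in the form $V_{n}F_{n}(x)=(1-t^{n})\cdot x$ (first route), or the small amount of $\gcd$-bookkeeping together with the two applications of the Frobenius identity (self-contained route).
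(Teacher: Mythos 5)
Your proof is correct, and your reading of the exponents ($i=l$, a typo in the statement) matches how the lemma is actually used later (Corollary~\ref{cor:multn} only ever needs $\mathcal{V}_{p^l}\circ\mathcal{F}_{p^l}$). The paper's own proof is a one-line version of your second route: reduce by additivity and continuity to one-dimensional elements and conclude from the freshman's dream $(1-at)^{p^l}=1-a^{p^l}t^{p^l}$. Your version is in fact more careful than the paper's, and usefully so: the paper reduces to $x=1+at$, but Teichm\"uller elements alone do not topologically generate $\mathbb{W}(R)$ (already for $R=\mathbb{F}_p$ the closed subgroup they generate is a topologically finitely generated piece of $\prod_{(n,p)=1}\mathbb{Z}_p$), so the honest reduction is to the factors $V_n(1-at)$ of the unique product decomposition as in Remark~\ref{rem:versch_ideals}, which is exactly what you do, at the price of the $\gcd$-relation $F_mV_n=\gcd(m,n)\,V_{n/\gcd}F_{m/\gcd}$. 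Your first route is genuinely different from the paper's: it deduces $V_nF_n(y)=V_n(1_{\mathbb{W}})\cdot y$ from the Witt-vector projection formula and then only uses characteristic $p$ to identify $V_{p^l}(1_{\mathbb{W}})=1-t^{p^l}=p^l\cdot 1_{\mathbb{W}}$; this avoids any reduction to generators and dovetails nicely with the paper's own categorical projection formula (Lemma~\ref{lem:proj_formula}), which is precisely the identity $y\cdot V_n(x)=V_n(F_n(y)\cdot x)$ lifted to the level of endomorphism categories, so it could even be cited here to keep the argument self-contained within the paper.
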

\begin{proof}
By continuity and additivity it suffices to show the equality for $x=(1-at)$. 
In this case this is obvious from the definitions and the fact that taking the 
$p^l$-th power on $R$ is additive. 
\end{proof}

\begin{rem}\label{rem:versch_ideals}
One has a nice description of the topology on $\mathbb{W}(R)$ from Theorem~\ref{thm:WittKorelation} 
using the Verschiebung operations. 
Recall that any element $x\in \mathbb{W}(R)$ can be uniquely represented as a product 
\[
\prod\limits_{i\in\mathbb{N}}(1 - a_it^i) = \prod\limits_{i\in\mathbb{N}}V_i(1 - a_it).
\]
If $x \in 1+t^NR[\![t]\!]$ then $a_i = 0$ for $i<N$. Hence $x$ is a (possibly infinite) sum of $V_i(x_i)$ 
for $x_i \in \mathbb{W}(R)$ and $i\ge N$. Conversely, any such sum belongs to the ideal, so we have: 
\[
1+t^NR[\![t]\!] = \{\prod\limits_{i\ge N} V_i(x_i) | x_i\in \mathbb{W}(R)\}.
\]
\end{rem}

\ssec{Categorical Frobenius \& Verschiebung}
A crucial observation about the operations $V_n$ and $F_n$ is that they 
are induced by explicit endofunctors of $\mathcal{E}nd(k)$. 

\sssec{} For a map of commutative ring spectra $f: R\to S$ one has an adjunction
\[
\begin{tikzcd}
    \mbf{Mod}_S \arrow[r, shift left=1ex, "f_*"] & \mbf{Mod}_R\arrow[l, shift left=.5ex, "f^!"{}]
\end{tikzcd}
\]
The functor $f^!$ is called the {\bf coinduction} functor. It can be described explicitly as follows: 
\[
f^!(M) := \mrm{maps}_R(S,M)
\]
If $S$ is perfect over $R$ (for example, it is finite and flat), both $f_*$ and $f^!$ restrict to the 
subcategories of perfect modules and we get an adjunction 
\[
\begin{tikzcd}
    \mrm{Perf}_S \arrow[r, shift left=1ex, "f_*"] & \mrm{Perf}_R\arrow[l, shift left=.5ex, "f^!"{}].
\end{tikzcd}
\]

\begin{rem}\label{rem:p!adj}
If $S$ if perfect over $R$, then the functor $f^!$ commutes with colimits. 
This follows from the equivalences 
\[
f^! \simeq \mrm{maps}_{R}(S, P) \simeq S^\vee \otimes_{R} P
\]
that make $f^!$ a left adjoint.
\end{rem}

\begin{constr}\label{constr:frob_and_ver}
Fix a positive natural number $l$. Consider the $k$-linear map of algebras $p_l: k[t] \to k[t]$
that sends $t$ to $t^l$. 
%Because $p_l$ is finite and flat, 
The functors $p_{l,*}$ and $p_l^!$ induce functors 
\[
\mrm{Fun}^{\mrm{ex}}_k(\sC^{\mrm{op}}, \mbf{Mod}_{k[t]}) \to \mrm{Fun}^{\mrm{ex}}_k(\sC^{\mrm{op}}, \mbf{Mod}_{k[t]}).
\]
%and also 
%\[
%\sC \otimes_k \mbf{Perf}_{k[t]} \to \sC \otimes_k \mbf{Perf}_{k[t]}.
%\]
%because $p_l$ is finite and free.
%Both functors 
By Lemma~\ref{lem:dual_desc} we may identify $\mathcal{E}nd(\sC)$ as a subcategory $\mrm{Fun}^{\mrm{ex}}_k(\sC^{\mrm{op}}, \mbf{Mod}_{k[t]})$ of functors for which the composite
\[
\sC^{\mrm{op}} \to \mbf{Mod}_{k[t]} \stackrel{\mathcal{U}}\to \mbf{Mod}_k
\]
is representable.
Now $p_{l,*}$ and $p_l^!$ preserve this subcategory because $p_l$ is finite and free. 
We denote the restriction of $p_{l,*}$ to $\mathcal{E}nd(\sC)$ by $\mathcal{F}_l$ and the restriction of 
$p_l^!$ by $\mathcal{V}_l$. We call these functors the {\it categorical} Frobenius and Verschiebung.
%We define the functors $\mathcal{F}_l, \mathcal{V}_l$ as the restriction
\end{constr}

\begin{rem}\label{rem:explicit_frob_and_ver}
Unravelling the definitions we can describe the functors from Construction~\ref{constr:frob_and_ver} 
explicitly as follows. 
Given any $(P,\phi) \in \mathcal{E}nd(\sC)$ we have
\begin{itemize}

\item $\mathcal{F}_l(P,\phi) \simeq (P,\phi^l)$

\item
$\mathcal{V}_l(P,\phi) \simeq \mrm{maps}_{k[t]}(k[t^{\frac{1}{l}}], P) \simeq (P^l, \begin{pmatrix}
0 & \cdots & 0 & \phi \\
1 & \cdots & 0 & 0 \\
\vdots & \ddots & \vdots & \vdots \\
0 & \cdots & 1 & 0
\end{pmatrix}).$
\end{itemize}
\end{rem}

\begin{prop}\label{prop:verfrob_as_functors}
For $\sC= \mbf{Perf}_k$ the induced maps 
\[
K_0(\mathcal{E}nd_0(\sC)) \to K_0(\mathcal{E}nd(\sC)) \stackrel{K_0(\mathcal{V}_n), K_0(\mathcal{F}_n)}\longrightarrow K_0(\mathcal{E}nd(\sC)) \to K_0(\mathcal{E}nd_0(\sC))
\]
are equal to the Verschiebung and Frobenius maps under the isomorphism from Theorem~\ref{thm:WittKorelation}. 
\end{prop}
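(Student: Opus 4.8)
The plan is to compare the two sides through the characteristic polynomial map, which by \thmref{thm:WittKorelation} is injective with image $\mathbb{W}_0(\pi_0(k))$; hence it suffices to match characteristic polynomials. Recall moreover (again \thmref{thm:WittKorelation}, in Almkvist's classical form) that $K_0(\mathcal{E}nd_0(\mbf{Perf}_k))$ is generated by classes of endomorphisms of finitely generated projective $\pi_0(k)$-modules, and, adding a complement with the zero endomorphism, even by classes of the form $[(k^{\oplus r},\phi)]-[(k^{\oplus r},0)]$ with $\phi\in\mrm{End}_{\pi_0(k)}(k^{\oplus r})$. Since $K_0(\mathcal{F}_n)$, $K_0(\mathcal{V}_n)$ and the operations $F_n$, $V_n$ are all additive, it is enough to check the claimed equalities on such generators. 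By \remref{rem:explicit_frob_and_ver} we have $\mathcal{F}_n(k^{\oplus r},\phi)\simeq(k^{\oplus r},\phi^n)$ and $\mathcal{V}_n(k^{\oplus r},\phi)\simeq(k^{\oplus rn},A_n(\phi))$, where $A_n(\phi)$ is the companion-type matrix displayed there; as $\mathcal{F}_n(k^{\oplus r},0)=(k^{\oplus r},0)$ and $\mathcal{V}_n(k^{\oplus r},0)=(k^{\oplus rn},A_n(0))$ have characteristic polynomial $1$, after projecting to $\mathcal{E}nd_0$ the problem reduces to the two identities
\[
\mrm{det}(1-t\phi^n)=F_n\bigl(\mrm{det}(1-t\phi)\bigr),\qquad \mrm{det}(1-tA_n(\phi))=V_n\bigl(\mrm{det}(1-t\phi)\bigr)
\]
in $\mathbb{W}(\pi_0(k))$, for an arbitrary square matrix $\phi$ over $\pi_0(k)$.

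For the second identity I would first separate off the purely linear-algebraic fact $\mrm{det}(1-tA_n(\phi))=\mrm{det}(1-t^n\phi)$, obtained by expanding the determinant of $1-tA_n(\phi)$ along its first block row (equivalently: $\mathcal{V}_n=p_n^!$ is coinduction along $t\mapsto t^n$, which on $k[t]$-modules is base change along that map); the signs $(-1)^{n+1}$ in \remref{rem:explicit_frob_and_ver} are there precisely so that this comes out without a stray sign. It then remains to prove
\[
\mrm{det}(1-t\phi^n)=F_n\bigl(\mrm{det}(1-t\phi)\bigr),\qquad \mrm{det}(1-t^n\phi)=V_n\bigl(\mrm{det}(1-t\phi)\bigr).
\]
Both sides of each equality are elements of $\mathbb{W}(\pi_0(k))$ depending functorially on $\phi$, so, replacing $\phi$ by the generic $r\times r$ matrix $(x_{ij})$ over the domain $R=\mathbb{Z}[x_{ij}\mid 1\le i,j\le r]$, it is enough to verify the identities over the algebraic closure $\overline{F}$ of $F=\mrm{Frac}(R)$, since $\mathbb{W}(R)$ injects into $\mathbb{W}(\overline{F})$.

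Over $\overline{F}$ the matrix $\phi$ is conjugate to an upper-triangular one with diagonal entries $\lambda_1,\dots,\lambda_r$, so $\mrm{det}(1-t\phi)=\prod_i(1-\lambda_i t)$. Then $\phi^n$ is upper-triangular with diagonal $\lambda_i^n$, giving
\[
\mrm{det}(1-t\phi^n)=\prod_i(1-\lambda_i^n t)=\prod_i F_n(1-\lambda_i t)=F_n\Bigl(\prod_i(1-\lambda_i t)\Bigr),
\]
where the last step uses that $F_n$ is a homomorphism for the group law of $\mathbb{W}$ (multiplication of power series) together with $F_n(1-\lambda t)=1-\lambda^n t$; in the same way
\[
\mrm{det}(1-t^n\phi)=\prod_i(1-\lambda_i t^n)=\prod_i V_n(1-\lambda_i t)=V_n\Bigl(\prod_i(1-\lambda_i t)\Bigr).
\]
This establishes both identities over $\overline{F}$, hence over $R$, hence for every $\phi$, and the proposition follows from injectivity of the characteristic polynomial map. (Alternatively, working over the torsion-free ring $R$ one can argue with ghost components: the $m$-th ghost of both sides of the Frobenius identity is $\mrm{tr}(\phi^{nm})$, and similarly for Verschiebung.)

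The step I expect to require the most attention is the first reduction: identifying $K_0(\mathcal{E}nd_0(\mbf{Perf}_k))$ with the classical group generated by matrix endomorphisms over $\pi_0(k)$ so that \remref{rem:explicit_frob_and_ver} applies on generators, together with the fiddly but elementary sign bookkeeping in the block-companion determinant so that $\mrm{det}(1-tA_n(\phi))$ equals $\mrm{det}(1-t^n\phi)$ on the nose. Everything after that is the standard splitting-principle computation.
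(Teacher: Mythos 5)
Your proposal is correct in substance but takes a genuinely different route from the paper. The paper's proof is essentially a citation: by Remark~\ref{rem:explicit_frob_and_ver} the functors $\mathcal{F}_n,\mathcal{V}_n$ coincide with those of \cite{blumberg2015witt}, whose Theorem~5.7 compares them with Grayson's functors on the exact category of endomorphisms of finitely generated projective $\pi_0(k)$-modules, and Grayson \cite{Grayson78Witt} already identified the induced maps on the relevant summand of $K_0$ with the classical $F_n,V_n$. You instead reprove the needed piece of Grayson's computation directly: injectivity of the characteristic polynomial map from Theorem~\ref{thm:WittKorelation}, plus the observation that the classes $[(k^{\oplus r},\phi)]-[(k^{\oplus r},0)]$ generate $K_0(\mathcal{E}nd_0(\mbf{Perf}_k))$ (their characteristic polynomials already exhaust $\mathbb{W}_0(\pi_0(k))$, so injectivity forces generation --- this is the honest justification of your slightly loosely attributed reduction), and then a splitting-principle/ghost-component verification of the two determinant identities. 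This is a legitimate self-contained alternative: it avoids invoking Grayson and the comparison theorem of \cite{blumberg2015witt}, at the cost of redoing the linear algebra that those references encapsulate.

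One step needs repair, and it is exactly the one you flagged. You assert $\mrm{det}(1-tA_n(\phi))=\mrm{det}(1-t^n\phi)$ with $A_n(\phi)$ taken literally from Remark~\ref{rem:explicit_frob_and_ver}, and claim the sign $(-1)^{n+1}$ is what makes this come out cleanly. In fact with corner block $(-1)^{n+1}\phi$ one gets $\mrm{det}(1-tA_n(\phi))=\mrm{det}\bigl(1-(-1)^{n+1}t^n\phi\bigr)$, which for even $n$ equals $\mrm{det}(1+t^n\phi)\neq V_n\bigl(\mrm{det}(1-t\phi)\bigr)$ in general (already for $n=2$, $\phi=\mathrm{id}$ over $\mathbb{Z}$: $1+t^2$ versus $1-t^2$); and even $n$ cannot be ignored, since the paper applies $\mathcal{V}_{p^l}$ with $p=2$ allowed. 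The correct anchor is the one you mention parenthetically: $\mathcal{V}_n=p_n^!$ as in Construction~\ref{constr:frob_and_ver}, and since $p_n$ is finite free, coinduction agrees with induction $k[t]\otimes_{t\mapsto t^n}(-)$, whose matrix on $(P,\phi)$ is the block companion matrix with corner entry $+\phi$ and identities on the subdiagonal; with that matrix $\mrm{det}(1-tA_n(\phi))=\mrm{det}(1-t^n\phi)$ holds and your argument goes through verbatim. So the sign in Remark~\ref{rem:explicit_frob_and_ver} is a slip of the paper (harmless for odd $n$), and your proof should compute $p_n^!$ directly rather than lean on that displayed formula.
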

\begin{proof}
By Remark~\ref{rem:explicit_frob_and_ver} the functors $\mathcal{F}_n$ and $\mathcal{V}_n$ are the 
same as those defined in Section~5 of \cite{blumberg2015witt}.  
The analogous functors on the exact category of endomorphisms of finitely generated projective modules over 
$\pi_0(k)$ were also 
defined by Grayson in \cite{Grayson78Witt} and they were shown to induce the Verschiebung and Frobenius maps 
on the corresponding summand of $K_0$. 
By Theorem~5.7 of \cite{blumberg2015witt} the operations are compatible with the operations defined by Grayson.
\end{proof}

\begin{rem}
For any integer $i$ applying $\pi_iE(-)$ to $\mathcal{F}_l$ and $\mathcal{V}_l$ we obtain Frobenius and 
Verschiebung maps for any localizing invariant $E$. When there is no confusion we use $F_l$ and $V_l$ to 
denote these maps.
\end{rem}

\sssec{}\label{diagrams}
Recall that 
\[
\mrm{Fun}^{\mrm{ex}}_k(\mbf{Perf}_{k[t]}, \mbf{Mod}_k) \simeq \mbf{Mod}_{k[t]}
\]
admits a sectionwise symmetric monoidal structure, extending the symmetric monoidal structure on 
$\mathcal{E}nd(\sC)$
(see (\ref{ssec:action})).
By construction, the functor $p_{l,*}$ is symmetric monoidal. 
and is also closed (i.e. preserves the internal hom). 
The adjoint functor $p_l^!$ is lax symmetric monoidal, i.e. 
there is a natural transformation 
\[
\mu_{X,Y}: p_l^!(X) \otimes p_l^!(Y) \to p_l^!(X \otimes Y).
\]
This yields a map 
\[
X \otimes p_l^!(Y) \stackrel{\eta_X \otimes \id_{p_l^!(Y)}}\to p_l^!(p_* (X)) \otimes p_l^!(Y) \stackrel{\mu_{p_* (X), Y}}\to p_l^!(p_{l,*}(X) \otimes Y)
\]
of functors $\mbf{Mod}_{k[t]} \times \mbf{Mod}_{k[t]} \longrightarrow \mbf{Mod}_{k[t]}.$
This specialises to a map
%\[
%\mathcal{F}_l (X \otimes Y) \to \mathcal{F}_l (X) \otimes \mathcal{F}_l (Y)
%\]
\[
\xi_{X,Y}: X \otimes \mathcal{V}_l (Y) \to \mathcal{V}_l (\mathcal{F}_l (X) \otimes Y)
\]
of functors $\mathcal{E}nd(\sC) \otimes \mathcal{E}nd(\mbf{Perf}_k) \to \mathcal{E}nd(\sC).$ Here we use the 
embedding $\mathcal{E}nd(\sC) \to \mrm{Fun}_\mrm{ex}(\sC^{\mrm{op}}, \mbf{Mod}_{k[t]})$ from Remark~\ref{rem:end_vs_ktmod}.

\begin{lem}\label{lem:proj_formula}
The following holds for $X \in \mathcal{E}nd(\sC)$ and $Y \in \mathcal{E}nd(\mbf{Perf}_k):$
\begin{enumerate}
\item 
We have a natural equivalence $\mathcal{F}_l (X \otimes Y) \simeq \mathcal{F}_l (X) \otimes \mathcal{F}_l (Y)$.
\item
The fiber of the map $\xi_{X,Y}: X \otimes \mathcal{V}_l(Y) \to \mathcal{V}_l(\mathcal{F}_l(X) \otimes Y)$ admits a finite filtration by functors $\mathcal{E}nd(\sC) \otimes \mathcal{E}nd(\mbf{Perf}_k) \to 
\mathcal{E}nd(\sC)$
that factor through $\iota : \sC \to \mathcal{E}nd(\sC)$. 
%equivalent to the fiber of a 
%natural map $G(X,Y) \to G(X,Y)$ for some exact functor 
%$G : \mathcal{E}nd(\sC) \otimes \mathcal{E}nd(\mbf{Perf}_k) \to \mathcal{E}nd(\sC)$.
\end{enumerate}
\end{lem}
\begin{proof}
The first claim follows from monoidality of $p_{l,*}$.

To prove (2) we use the description of Remark~\ref{rem:explicit_frob_and_ver}. For 
$(X,\phi) \in \mathcal{E}nd(\sC)$ and $(Y,\alpha) \in \mathcal{E}nd(\mbf{Perf}_k)$ we may identify 
$X \otimes \mathcal{V}_l(Y)$ with 
\[
(X \otimes Y^l, 
\begin{pmatrix}
0 & \cdots & 0 & \phi \otimes \alpha \\
\phi \otimes \id_Y & \cdots & 0 & 0 \\
\vdots & \ddots & \vdots & \vdots \\
0 & \cdots & \phi \otimes \id_Y & 0
\end{pmatrix}
)
\]
and 
$\mathcal{V}_l(F_l(X) \otimes Y)$ with 
\[
(X \otimes Y^l, 
\begin{pmatrix}
0 & \cdots & 0 & \phi^l \otimes \alpha \\
\id_X \otimes \id_Y & \cdots & 0 & 0 \\
\vdots & \ddots & \vdots & \vdots \\
0 & \cdots & \id_X \otimes \id_Y & 0
\end{pmatrix}
).
\]
Moreover, the map $X \otimes \mathcal{V}_l(Y) \to \mathcal{V}_l(F_l(X) \otimes Y)$ may be identified with 
\[
\begin{pmatrix}
\phi^{l-1} \otimes \id_Y & \cdots & 0 & 0 \\
\vdots & \ddots & \vdots & \vdots \\
0 & \cdots & \phi\otimes \id_Y & 0\\
0 & \cdots & 0 & \id_X\otimes \id_Y
\end{pmatrix}.
\]
Denote the fiber of this map by $K$. The underlying module is identified with $\bigoplus\limits_{i=1}^{l-1}\Fib(\phi^{l-i}) \otimes Y$ and 
the endomorphism $\psi$ is 
\[
\begin{pmatrix}
0 & \cdots & 0 & 0\\
\phi \otimes \id_Y & \cdots & 0 & 0 \\
\vdots & \ddots & \vdots & \vdots \\
0 & \cdots & \phi \otimes \id_Y & 0
\end{pmatrix}.
\]
In particular, the map $\psi^l$ is canonically null on $K$ as an object of $\sC$. 
Recall that for any $(N,\alpha_N), (M,\alpha_M) \in \mathcal{E}nd(\sC)$ we have a fiber sequence 
\[
\Maps_{\mathcal{E}nd(\sC)}(N,M) \to \Maps_{\sC}(N,M) \stackrel{\alpha^*_N - \alpha_{M*}}\to \Maps_{\sC}(N,M).
\] 
Take $M = K$. By the above computation, $\psi^l$ acts trivially on the middle and the right terms 
of the fiber sequence, hence $\phi^{2l}$ acts trivially on the left term as well. Hence, we have a canonical 
identification $\phi^{2l} \simeq 0$ on $\Fib(\xi_{X,Y})$. 
%Thus the fiber sequence of functors 
%\[
%\Fib(\xi_{\Fib(\phi^{2l}_X),Y}) \to \Fib(\xi_{X,Y}) \stackrel{\phi^{2l}}\to \Fib(\xi_{X,Y})
%\]
%splits and $\Fib(\xi_{X,Y})$ is a retract of $\Fib(\xi_{\Fib(\phi^{2l}_X),Y})$. 
% Comment on retracts.

%Note that $\Fib(\xi_{\Fib(\phi^{2l}_X),Y})$ belongs to the smallest stable subcategory $\sK$
%containing functors that factor through $\iota: \sC \to \mathcal{E}nd(\sC)$. 
%Indeed, since $\Fib(\phi^{2l})$ is a finite extension of copies of $\Fib(\phi)$, it suffices to show that 
%$\Fib(\xi_{\Fib(\phi_X),Y})$ belongs to $\sK$. 
%The endomorphism on $\Fib(\phi_X) \otimes \mathcal{V}_l(Y)$ is trivial, so 
%$\Fib(\phi_X) \otimes \mathcal{V}_l(Y) \simeq \iota(\mathcal{U}(\Fib(\phi_X) \otimes \mathcal{V}_l(Y))$ 
%belongs to $\sK$ (here $\mathcal{U}$ denotes the underlying module functor $\mathcal{E}nd(\sC) \to \sC$).
%Similarly, $\mathcal{F}_l(\Fib(\phi_X)) \otimes Y$ belongs to $\sK$. 
%By Lemma~\ref{lem:Vpreservesnil} $\mathcal{V}_l(\mathcal{F}_l(\Fib(\phi_X)) \otimes Y)$ also belongs to $\sK$, 
%so $\Fib(\xi_{\Fib(\phi_X),Y})$ belongs to $\sK$ as a fiber of functors belonging to $\sK$.

By construction, $\Fib(\xi_{X,Y})$ is induced by an exact functor 
$\mbf{Mod}_{\mathbb{S}[t_1]} \otimes \mbf{Mod}_{\mathbb{S}[t_2]} \to \mbf{Mod}_{\mathbb{S}[t_3]}$ which itself 
is induced by tensoring with the $\mathbb{S}[t_1,t_2]$-$\mathbb{S}[t_3]$-bimodule 
$M_{\xi} \in \mbf{Perf}_{\mathbb{S}[t_1,t_2, t_3]}$. The above computation shows that $t_3$ acts nilpotently 
on $M_{\xi}$. 
Consider the subcategory $\sK \subset \mbf{Perf}_{\mathbb{S}[t_1,t_2,t_3]}$ of modules that are finite 
extensions of modules that are in the image of 
\[
i_{*} : \mbf{Perf}_{\mathbb{S}[t_1,t_2]} \to \mbf{Perf}_{\mathbb{S}[t_1,t_2,t_3]}.
\]
Equivalently, this is the smallest stable subcategory of $\mbf{Perf}_{\mathbb{S}[t_1,t_2,t_3]}$ containing 
the image of $i_*$. The inclusion of $\sK$ into the subcategory $\sK^{nil} \subset \mbf{Perf}_{\mathbb{S}[t_1,t_2,t_3]}$ 
of perfect modules where $t_3$ acts 
nilpotently identifies the latter with the retract-closure of $\sK$. Note that it also induces an isomorphism 
\[
\mathbb{Z} \simeq K_0(\sK) \to K_0(\sK^{nil}) \simeq \mathbb{Z},
\]
so by \cite[Theorem~2.1]{ThomasonSubcategories} it is an equivalence of categories. 
Now $M_{\xi}$ is a finite extension of modules of the form $i_*M_i$. Each $i_*M_i$ induces a functor 
\[
\mathcal{E}nd(\sC) \otimes \mathcal{E}nd(\mbf{Perf}_k) \to \sC \stackrel{\iota}\to \mathcal{E}nd(\sC),
\]
so the filtration on $M_{\xi}$
induces a required filtration on $\Fib(\xi_{X,Y})$. 
\end{proof}

\begin{rem}
Lemma~\ref{lem:proj_formula}(2) implies that $X \otimes \mathcal{V}_l(Y)$ and $\mathcal{V}_l(\mathcal{F}_l(X) \otimes Y)$ have the 
same classes in $K_0(\mathcal{E}nd_0(\sC))$. Thus it can be thought of as a categorical version of the relation proved
in \cite{stienstra1982operations} and later in \cite{Tabuada_Kleinian}. 
\end{rem}

\begin{cor}\label{cor:multn}
Assume $k$ is an $\mathbb{F}_p$-algebra. 
Let $\sC$ be a small $k$-linear idempotent complete stable $\infty$-category and let $E$ be a localizing invariant. 
Then the multiplication by $p^l \in K_0(\mathcal{E}nd_0(\mbf{Perf}_k))$ on $\pi_0E(\mathcal{E}nd_0(\sC))$ and 
on $\pi_0E(\mathcal{N}il_0(\sC))$ is equal to the composite
\[
\pi_0E(\mathcal{E}nd_0(\sC)) \to \pi_0E(\mathcal{E}nd(\sC)) \stackrel{\pi_0E(\mathcal{V}_{p^l}\circ \mathcal{F}_{p^l})}\to \pi_0E(\mathcal{E}nd(\sC)) \to \pi_0E(\mathcal{E}nd_0(\sC))
\]
and 
\[
\pi_0E(\mathcal{N}il_0(\sC)) \to \pi_0E(\mathcal{N}il(\sC)) \stackrel{\pi_0E(\mathcal{V}_{p^l}\circ \mathcal{F}_{p^l})}\to \pi_0E(\mathcal{N}il(\sC)) \to \pi_0E(\mathcal{N}il_0(\sC))
\]
respectively for any $l\in \mathbb{N}$.
\end{cor}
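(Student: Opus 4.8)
The plan is to reduce everything to the identity $V_{p^l}F_{p^l} = p^l$ on the ring of Witt vectors (Lemma~\ref{lem:verfrob}) together with the projection formula (Lemma~\ref{lem:proj_formula}). First I would dispense with the $\mathcal{E}nd_0$ case. By Theorem~\ref{thm:WittKorelation}, $K_0(\mathcal{E}nd_0(\mbf{Perf}_k)) \cong \mathbb{W}_0(\pi_0(k))$, and Proposition~\ref{prop:verfrob_as_functors} identifies the endofunctors $K_0(\mathcal{V}_{p^l})$, $K_0(\mathcal{F}_{p^l})$ of $K_0(\mathcal{E}nd_0(\mbf{Perf}_k))$ with the classical operations $V_{p^l}$, $F_{p^l}$; so for $\sC = \mbf{Perf}_k$ the stated composite is multiplication by $V_{p^l}F_{p^l}(1) = p^l$ by Lemma~\ref{lem:verfrob}. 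For general $\sC$, the action of $K_0(\mathcal{E}nd_0(\mbf{Perf}_k))$ on $K_0(\mathcal{E}nd_0(\sC))$ (resp.\ $K_0(\mathcal{N}il_0(\sC))$) is, by Corollary~\ref{cor:action_W0} and \ref{ssec:action}, induced by the biexact tensoring $\mathcal{E}nd(\sC) \times \mathcal{E}nd(\mbf{Perf}_k) \to \mathcal{E}nd(\sC)$, so I would show that tensoring with the K-theory class of $\mathcal{V}_{p^l}\mathcal{F}_{p^l}(k,\id)$ agrees, after the idempotent projections, with applying $\mathcal{V}_{p^l}\mathcal{F}_{p^l}$ to the $\sC$-factor.

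The key computation is exactly Lemma~\ref{lem:proj_formula}. Part~(1) gives $\mathcal{F}_{p^l}(X \otimes Y) \simeq \mathcal{F}_{p^l}(X)\otimes\mathcal{F}_{p^l}(Y)$, and part~(2) with $Y$ replaced by $\mathcal{F}_{p^l}(Y')$ gives $X \otimes \mathcal{V}_{p^l}\mathcal{F}_{p^l}(Y') \simeq \mathcal{V}_{p^l}(\mathcal{F}_{p^l}(X)\otimes\mathcal{F}_{p^l}(Y'))$, hence, combining,
\[
X \otimes \mathcal{V}_{p^l}\mathcal{F}_{p^l}(Y') \simeq \mathcal{V}_{p^l}\mathcal{F}_{p^l}(X \otimes Y').
\]
Taking $Y' = (k,\id)$, the right-hand side is $\mathcal{V}_{p^l}\mathcal{F}_{p^l}$ applied to $X$ (since $(k,\id)$ is the tensor unit of $\mathcal{E}nd(\mbf{Perf}_k)$ and $-\otimes_k\mbf{Perf}_k$ acts as the identity on $\mathcal{E}nd(\sC)$), while the class of $\mathcal{V}_{p^l}\mathcal{F}_{p^l}(k,\id)$ in $K_0(\mathcal{E}nd_0(\mbf{Perf}_k)) = \mathbb{W}_0(\pi_0(k))$ is $p^l$ by the first paragraph. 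Passing to $K_0$ and applying the idempotent $\pi$ of \ref{ssec:nil0} to both sides (which is legitimate since $\mathcal{F}_{p^l}$, $\mathcal{V}_{p^l}$ and $-\otimes Y$ all send trivial endomorphisms to trivial endomorphisms, so commute with $\pi$) yields that multiplication by $p^l$ on $K_0(\mathcal{E}nd_0(\sC))$ equals the displayed composite through $K_0(\mathcal{V}_{p^l}\mathcal{F}_{p^l})$.

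For the $\mathcal{N}il_0$ statement I would run the identical argument, noting that the tensoring of \ref{ssec:action} restricts to an action on $\mathcal{N}il(\sC)$, that $\mathcal{F}_{p^l}$ and $\mathcal{V}_{p^l}$ preserve $\mathcal{N}il(\sC)$ (the pushforward along $t\mapsto t^{p^l}$ sends a nilpotent endomorphism to a nilpotent one, as in the proof of Proposition~\ref{prop:role_of_nil}, and likewise $\mathcal{V}_{p^l}$ via Remark~\ref{rem:explicit_frob_and_ver}), and that all the equivalences of Lemma~\ref{lem:proj_formula} were established at the level of the functor categories containing $\mathcal{N}il(\sC)$. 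The main obstacle is bookkeeping rather than mathematics: checking that the three ``decorations'' in play — the restriction from $\mathcal{E}nd$ to $\mathcal{N}il$, the idempotent splitting off $\mathcal{E}nd_0$/$\mathcal{N}il_0$, and the ring structure on $K_0$ — are all compatible with $\mathcal{F}_{p^l}$, $\mathcal{V}_{p^l}$ and the tensoring, so that the single identity $X\otimes\mathcal{V}_{p^l}\mathcal{F}_{p^l}(k,\id)\simeq\mathcal{V}_{p^l}\mathcal{F}_{p^l}(X)$ really does descend to the two asserted equalities of endomorphisms of abelian groups.
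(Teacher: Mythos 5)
Your proposal is correct and is essentially the paper's own argument: the projection formula of Lemma~\ref{lem:proj_formula} evaluated at the unit $(k,\id)$ gives $X\otimes\mathcal{V}_{p^l}\mathcal{F}_{p^l}(k,\id)\simeq \mathcal{V}_{p^l}\mathcal{F}_{p^l}(X)$, and the class of $\mathcal{V}_{p^l}\mathcal{F}_{p^l}(k,\id)$ is identified with $p^l\in\mathbb{W}_0(\pi_0(k))$ via Proposition~\ref{prop:verfrob_as_functors} and Lemma~\ref{lem:verfrob}, exactly as in the paper's proof. The only inaccuracy is the parenthetical claim that $\mathcal{V}_{p^l}$ sends trivial endomorphisms to trivial ones (it does not: $\mathcal{V}_{p^l}(P,0)$ is a nonzero nilpotent shift, so $\mathcal{V}_{p^l}$ does not commute with the idempotent $\pi$ on the nose), but the descent you need holds anyway, either because the $K_0(\mathcal{E}nd(\mbf{Perf}_k))$-action already descends past $\pi$ (Corollary~\ref{cor:action_W0}) or because $[\mathcal{V}_{p^l}(P,0)]=p^l[(P,0)]$ lies in $\mathcal{I}_{\on{triv}}$, as in Lemma~\ref{lem:Vpreservesnil}.
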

\begin{proof}
%If $\sC = \mbf{Perf}_k$ and $E$ is K-theory, this follows from Proposition~\ref{prop:verfrob_as_functors} and %Lemma~\ref{lem:verfrob}. 
It follows from Lemma~\ref{lem:proj_formula}(2) that there is a fiber sequence 
\[
K(X,Y) \to  X \otimes \mathcal{V}_{p^l}(Y) \to \mathcal{V}_{p^l}(F_{p^l}(X) \otimes Y)
\]
where $K(X,Y)$ is a finite extension of functors that factor through $\iota: \sC \to \mathcal{E}nd(\sC)$ functorial in $X \in \mathcal{E}nd(\sC)$ and $Y \in \mathcal{E}nd(\mbf{Perf}_k)$.
Now we set $Y=(k,id)$. Since the $K_0$-class of $\mathcal{V}_{p^l}(k,id)$ is $p^l$ (this follows from 
Proposition~\ref{prop:verfrob_as_functors} and Lemma~\ref{lem:verfrob}), the endofunctor 
$X \mapsto X \otimes \mathcal{V}_{p^l}(k,id)$ induces the multiplication by $p^l$ on 
$\pi_0E$ of both $\mathcal{N}il(\sC)$ or $\mathcal{E}nd(\sC)$. On the other hand, by additivity 
$X \mapsto K(X,Y)$ induces the zero map on $\pi_0E$ of both $\mathcal{N}il_0(\sC)$ or $\mathcal{E}nd_0(\sC)$.
Hence, applying additivity again we see that $X \mapsto \mathcal{V}_{p^l}(\mathcal{F}_{p^l}(X))$ induces the multiplication by $p^l$ on $\pi_0E$ of either $\mathcal{N}il(\sC)$ or $\mathcal{E}nd(\sC)$.
\end{proof}

\section{Torsion in Nil-theories}\label{section:main_result}

This section is dedicated to proving the main results 
(Theorems~\ref{thm:main_result}~and~\ref{thm:main_result_2}). So far we 
never had to require our localizing invariants to be finitary. 
The reason we need it is to show that the action of $\mathbb{W}_0(\pi_0(k))$ 
constructed in Corollary~\ref{cor:action_W0} is continuous. This allows to extend the action 
to the action of the ring of Witt vectors $\mathbb{W}(\pi_0(k))$. The ring $\mathbb{W}(\pi_0(k))$
has many nice properties which the ring $\mathbb{W}_0(\pi_0(k))$ does not (for example, an integer 
$l$ is invertible in $\mathbb{W}(\pi_0(k))$ whenever it is invertible in $k$ by 
Proposition~\ref{prop:invert_witt}). 
The $\mathbb{W}(\pi_0(k))$-modules $NE(\sC)$ inherit these properties allowing to prove the main theorems. 
%However, the methods described so far only allow us to construct the action 
%of the smaller ring of rational Witt vectors $\mathbb{W}_0(\pi_0(k))$. 
%It turns out that 
%initarity of $E$ implies that the action of $\mathbb{W}_0(\pi_0(k))$ on $NE(\sC)$ is continuous, and 
%hence it extends to an action $\mathbb{W}(\pi_0(k))$. 

\ssec{}\label{ssec:truncated_polynomidal_rings}
We define $k[t]/(t^n)$ as the pushout of $\mathbb{E}_\infty$-$k$-algebras:
\[
\begin{tikzcd}
k[t] \arrow[r,"t \mapsto t^n"]\arrow[d, "t \mapsto 0"] & k[t]\arrow[d]\\
k \arrow[r] & k[t]/(t^n).
\end{tikzcd}
\]
By definition, it is a flat $k$-algebra and we have $\pi_\star(k[t]/(t^n)) \simeq \pi_\star(k)[t]/(t^n)$. 
We have canonical morphisms $k[t]/(t^n) \to k[t]/(t^m)$ for $m\le n$.

\ssec{}\label{ssec:filtration}
Now we 
introduce a filtration on the $\infty$-category $\mathcal{N}il(\sC)$. 
We start from the case $\sC = \mbf{Perf}_k$.  
% with stable $\infty$-categories of endomorphisms of a fixed 
%degree.
Define $\mathcal{N}il^{\le n}(\mbf{Perf}_k)$ to be the stable $\infty$-category of those $k[t]/(t^n)$-modules 
that 
are perfect 
over $k$.
Restricting scalars gives rise to canonical functors 
\[
\tau_{n,m}:\mathcal{N}il^{\le n}(\mbf{Perf}_k) \to \mathcal{N}il^{\le m}(\mbf{Perf}_k) \text{ for any } 
n \le m 
\]
\[
\tau_n:\mathcal{N}il^{\le n}(\mbf{Perf}_k) \to \mathcal{N}il(\mbf{Perf}_k) \text{ for any } n.
\]

%The following lemma shows that the filtration we defined is good for our purposes. 
%Its proof in the $\mathbb{Z}$-linear case can be found in \cite[Proposition~2.15]{BDM_K1local} but we 
%include the general case. 

\begin{lem}\label{lem:filtration}
The canonical map 
\[
\colim\limits_n \mathcal{N}il^{\le n}(k) \to \mathcal{N}il(k)
\]
is an equivalence.
\end{lem}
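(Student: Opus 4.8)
The plan is to verify the hypotheses of Lemma~\ref{lem:colim_cats} for the filtered diagram $\{\mathcal{N}il^{\le n}(\mbf{Perf}_k)\}_n$ with transition functors $\tau_{n,m}$ and the candidate colimit functor $\mathcal{N}il(\mbf{Perf}_k)$ assembled from the $\tau_n$. There are two things to check: essential surjectivity on objects, and that mapping spaces are computed as the appropriate filtered colimit.

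For essential surjectivity, I would take $(P,\phi) \in \mathcal{N}il(\mbf{Perf}_k)$ and invoke Lemma~\ref{lem:nilp}, which produces $n \in \mathbb{N}$ with $\phi^n \simeq 0$. A choice of nullhomotopy of $\phi^n$ upgrades the $k[t]$-module structure on $P$ (i.e.\ the pair $(P,\phi)$, under the identification $\mathcal{E}nd(\mbf{Perf}_k) \simeq \mrm{Fun}^{\mrm{ex}}_k(\mbf{Perf}_{k[t]},\mbf{Perf}_k)$ from Lemma~\ref{lem:dual_desc}) to a $k[t]/(t^n)$-module structure whose underlying object $P$ is perfect over $k$; that is precisely an object of $\mathcal{N}il^{\le n}(\mbf{Perf}_k)$ mapping to $(P,\phi)$ under $\tau_n$. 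Here one must be a little careful: in the $\infty$-categorical setting $\phi^n \simeq 0$ does not canonically give a $k[t]/(t^n)$-action, but any nullhomotopy does, and that suffices for essential surjectivity (we only need \emph{some} preimage).

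For the mapping spaces, fix $x,y \in \mathcal{N}il^{\le n}(\mbf{Perf}_k)$; writing $x = (P,\phi)$, $y = (Q,\psi)$ as $k[t]/(t^n)$-modules, I would compute $\Maps_{\mathcal{N}il^{\le m}}(\tau_{n,m}x, \tau_{n,m}y)$ as a totalization / fiber sequence built from $\Maps_{\mbf{Perf}_k}(P,Q)$ with the difference of the two $t$-actions, and similarly $\Maps_{\mathcal{N}il}(\tau_n x, \tau_n y) = \Maps_{\mrm{Fun}(B\mathbb{N},\mbf{Perf}_k)}((P,\phi),(Q,\psi))$ as the equalizer-type mapping spectrum of $B\mathbb{N}$-diagrams. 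Concretely, a map in $\mathcal{N}il$ is a map $f\colon P \to Q$ together with a homotopy $f\phi \simeq \psi f$, while a map in $\mathcal{N}il^{\le m}$ additionally carries coherence data witnessing compatibility with the $t^m = 0$ relation; as $m \to \infty$ this extra data becomes contractible since $\phi^m \simeq 0$ and $\psi^m \simeq 0$ already on both sides once $m \ge n$. So the colimit over $m$ of the restriction maps is an equivalence onto the $\mathcal{N}il$ mapping space. I expect this mapping-space comparison to be the main obstacle: the naive "$\phi$ is nilpotent so eventually nothing changes" intuition is clear, but turning it into a genuine equivalence of spaces requires identifying the mapping spaces in $\mathcal{N}il^{\le m}(\mbf{Perf}_k) = \mbf{Perf}_{k[t]/(t^m)} \times_{\mbf{Perf}_{k[t]}} (\text{objects perfect over }k)$ and in $\mathcal{N}il(\mbf{Perf}_k)$ via a common model — most cleanly by working inside $\mrm{Fun}^{\mrm{ex}}_k(-,\mbf{Mod}_k)$ as in Lemma~\ref{lem:dual_desc}, where both become mapping spaces of modules over $k[t]/(t^m)$ resp.\ $k[t]$ and the transition maps are restriction of scalars along $k[t]/(t^{m+1}) \to k[t]/(t^m)$, whose colimit is $k[t] \to k[t]/(t^m)$'s... rather, one uses $\colim_m k[t]/(t^m)$ in the relevant pro/ind sense and the fact that $P,Q$ being $t$-nilpotent makes the relevant $\mathrm{Tor}$-terms stabilize. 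Once the mapping-space statement is in hand, Lemma~\ref{lem:colim_cats} applies directly and yields the asserted equivalence.

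Finally, I would remark that this is the $\sC = \mbf{Perf}_k$ case announced in \ref{ssec:filtration}, and that the general filtration on $\mathcal{N}il(\sC)$ — and the fact that its colimit is $\mathcal{N}il(\sC)$ — will be deduced afterward by tensoring with $\sC$ and using Corollary~\ref{cor:nil_vs_nilk} together with the fact that $-\otimes_k \sC$ preserves filtered colimits (Lemma~\ref{lem:tensor_prod_ex}).
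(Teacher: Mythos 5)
Your overall strategy matches the paper's: reduce to Lemma~\ref{lem:colim_cats}, get objects from nilpotence, and compare mapping spectra by viewing both sides as module categories over $k[x]/(x^m)$ and $k[x]$. But the two steps that carry the actual content are not established. First, your essential-surjectivity argument rests on the claim that \emph{any} nullhomotopy of $\phi^n$ upgrades $(P,\phi)$ to a $k[t]/(t^n)$-module. That is not justified: restriction of scalars along $k[t]\to k[t]/(t^n)$ is monadic, so a lift is a module structure over the monad $-\otimes_{k[t]}k[t]/(t^n)$, i.e.\ an action map \emph{together with} an infinite tower of associativity coherences; a nullhomotopy of $\phi^n$ only supplies the action map with its unit condition. (The paper itself only asserts essential surjectivity, but it does not rest it on this false-in-general claim; one genuine route is to first prove full faithfulness and then observe that the image is a thick subcategory containing all $(Q,0)$, which generate $\mathcal{N}il(\mbf{Perf}_k)$.)

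Second, and more seriously, the mapping-space comparison --- the heart of the lemma --- is left at the level of a heuristic that mischaracterizes what happens. It is not true that the ``extra coherence data becomes contractible once $m\ge n$'': for every finite $m$ the mapping spectra genuinely differ. Indeed, by adjunction $\mrm{maps}_{k[x]/(x^m)}(U,V)\simeq \mrm{maps}_{k[x]/(x^n)}(U\otimes_{k[x]/(x^m)}k[x]/(x^n),V)$, and for $m\ge 2n$ the periodic free resolution of $k[x]/(x^n)$ over $k[x]/(x^m)$ shows that $U\otimes_{k[x]/(x^m)}k[x]/(x^n)$ is an infinite direct sum of shifted copies of $U$ (the differentials $x^{m-n}$, $x^n$ act by zero), so nothing ``stabilizes'' at any finite stage. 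The equivalence holds only in the colimit over $m$, because the transition maps of the pro-system $\{U\otimes_{k[x]/(x^m)}k[x]/(x^n)\}_m$ are eventually zero on the higher summands, making it pro-equivalent to the constant system $\{U\}$; this explicit resolution-plus-pro-constancy argument is exactly what the paper supplies and what your sketch is missing (the sentence where you attempt to articulate the mechanism trails off). As it stands, the proposal is a plan in the right direction rather than a proof.
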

\begin{proof}
The proof is the same as in \cite[Proposition~2.15]{BDM_K1local}. 
Note that $k[t]$ is assumed to be a discrete ring there: 
the assumption is needed to show that the map of pro-spectra
\begin{equation}\label{eq:pro-equiv}
k[t]/(t^n) \otimes_{k[t]} k[t]/(t^n) \to \{k[t]/(t^n) \otimes_{k[t]/(t^m)} k[t]/(t^n)\}_{m\ge n}.
\end{equation}
is a pro-equivalence. 
We show that it still holds in our setting. 

First, using the K\"{u}nneth spectral sequence, we 
can compute the homotopy groups of each of the modules in the tower. 
In particular, we see that the cofibers are free as $k$-modules, meaning that they are equivalent 
to infinite direct sums of copies of suspensions of $k$. 
Now the map 
\[
\begin{tikzcd}
\bigoplus \Sigma^{i_l} k \simeq \Cofib (k[t]/(t^n) \otimes_{k[t]/(t^{m+n})} k[t]/(t^n) \to k[t]/(t^n) \otimes_{k[t]} k[t]/(t^n))\arrow[d] \\
\Cofib (k[t]/(t^n) \otimes_{k[t]/(t^m)} k[t]/(t^n) \to k[t]/(t^n) \otimes_{k[t]} k[t]/(t^n))
\end{tikzcd}
\] %rewrite diagram
is a zero map on the homotopy groups, and hence (by freeness) is a zero map. This shows the the 
cofiber of (\ref{eq:pro-equiv}) is a pro-null tower and that the map itself is a pro-equivalence. 

%Moreover, we can see that $k[t]/(t^n) \otimes_{k[t]/(t^m)} k[t]/(t^n)$ is equivalent to a direct sum of %copies 
%of $R$
%The map also admits a left inverse given by 
%\[
%\begin{tikzcd}
%k[t]/(t^n) \otimes_{k[t]/(t^m)} k[t]/(t^n) \arrow[d, "\simeq"] \\
%(k[t]/(t^n) \otimes_{k[t]} k[t]) \otimes_{k[t]/(t^m)} k[t]/(t^n) \arrow[d]\\
%(k[t]/(t^n) \otimes_{k[t]} k[t]/(t^m)) \otimes_{k[t]/(t^m)} k[t]/(t^n) \arrow[d, "\simeq"] \\
%k[t]/(t^n) \otimes_{k[t]} k[t]/(t^n)
%\end{tikzcd}
%\]
%which is also a weak pro-equivalence.
\end{proof}

\sssec{}\label{sssec:filtration}
For $\sC$ a small $k$-linear idempotent complete stable $\infty$-category we define 
\[
\mathcal{N}il^{\le n}(\sC) := \sC \otimes_k \mathcal{N}il^{\le n}(\mbf{Perf}_k).
\]
By Corollary~\ref{cor:nil_vs_nilk} we have an equivalence 
\[
\mathcal{N}il(\sC) \simeq \sC \otimes_k \mathcal{N}il(\mbf{Perf}_k)
\]
so we have a filtration of $\mathcal{N}il(\sC)$ by $\mathcal{N}il^{\le n}(\sC)$.

\begin{cor}\label{cor:filtration_general}
In the context above we have 
\[
\colim \mathcal{N}il^{\le n}(\sC) \simeq \mathcal{N}il(\sC). 
\]
\end{cor}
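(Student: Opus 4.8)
The plan is to deduce this from \lemref{lem:filtration} together with the fact that $-\otimes_k\sC$ commutes with filtered colimits. First I would recall from \defref{defn:endomorphisms} (together with \corref{cor:nil_vs_nilk} and the definition in \sssecref{sssec:filtration}) that $\mathcal{N}il^{\le n}(\sC)=\sC\otimes_k\mathcal{N}il^{\le n}(\mbf{Perf}_k)$ and $\mathcal{N}il(\sC)\simeq\sC\otimes_k\mathcal{N}il(\mbf{Perf}_k)$, so the claim is obtained by applying the functor $\sC\otimes_k-$ to the equivalence
\[
\colim_n \mathcal{N}il^{\le n}(\mbf{Perf}_k)\isoto\mathcal{N}il(\mbf{Perf}_k)
\]
of \lemref{lem:filtration}.

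The one point that needs justification is that $\sC\otimes_k-$ preserves this particular filtered colimit. By \lemref{lem:tensor_prod_ex} the functor $-\otimes_k\sC$ on $\mrm{Cat}^{\mrm{perf}}_{\infty,k}$ preserves colimits, so it in particular commutes with the filtered colimit $\colim_n\mathcal{N}il^{\le n}(\mbf{Perf}_k)$. Hence
\[
\sC\otimes_k\mathcal{N}il(\mbf{Perf}_k)\simeq\sC\otimes_k\bigl(\colim_n\mathcal{N}il^{\le n}(\mbf{Perf}_k)\bigr)\simeq\colim_n\bigl(\sC\otimes_k\mathcal{N}il^{\le n}(\mbf{Perf}_k)\bigr)=\colim_n\mathcal{N}il^{\le n}(\sC),
\]
and combining with $\mathcal{N}il(\sC)\simeq\sC\otimes_k\mathcal{N}il(\mbf{Perf}_k)$ gives the assertion. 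One should also check that the structure maps of the colimit on the left match the transition functors $\tau_{n,m}$ tensored with $\sC$, which is immediate from functoriality of $\sC\otimes_k-$ applied to the maps $\tau_{n,m}$ for $\mbf{Perf}_k$.

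I expect no real obstacle here: the entire content is packaged in \lemref{lem:filtration} and \lemref{lem:tensor_prod_ex}, and the corollary is essentially a one-line consequence. The only mildly delicate point — that the colimit computing $\mathcal{N}il(\mbf{Perf}_k)$ is genuinely filtered (it is indexed by $\mathbb{N}$) so that \lemref{lem:tensor_prod_ex} applies — is automatic.
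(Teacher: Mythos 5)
Your argument is exactly the paper's: the proof there simply cites \lemref{lem:tensor_prod_ex} and \lemref{lem:filtration}, and you have correctly spelled out how the colimit-preservation of $\sC\otimes_k-$ combined with \corref{cor:nil_vs_nilk} and the definition in \sssecref{sssec:filtration} yields the claim. No gaps; this is the intended one-line deduction, just written out in full.
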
 
\begin{proof}
Follows from  Lemma~\ref{lem:tensor_prod_ex} and Lemma~\ref{lem:filtration}.
\end{proof}

\begin{lem}\label{lem:Vpreservesnil}
The functor $\mathcal{V}_l \circ \iota_{\mrm{nil}}: \sC \to \mathcal{N}il(\sC)$ admits an $l$ step filtration such that the associated graded are equivalent to $\iota_{\mrm{nil}}$. 
In particular there is an equivalence of maps 
\[
\mathcal{V}_i \circ \iota_{\mrm{nil}} \simeq i\cdot\iota_{\mrm{nil}} : E(\sC) \to E(\mathcal{N}il(\sC))
\]
for any localizing invariant $E$.
\end{lem}
\begin{proof}
The base change theorem tells us that for all $l \in \mathbb{N}\setminus\{0\}$ we have commutative diagrams 
\[
\begin{tikzcd}
\mbf{Mod}_{k} \arrow[r,"u_l^!"]\arrow[d,"a_{0,*}"] & \mbf{Mod}_{k[t]/(t^l)}\arrow[d, "a_{l,*}"]\\
\mbf{Mod}_{k[t]} \arrow[r, "p^!_l"] & \mbf{Mod}_{k[t]}
\end{tikzcd}
\]
induced by the pushout diagrams of ring spectra
\[
\begin{tikzcd}
k[t] \arrow[d, "a_0"]\arrow[r, "p_l"] & k[t]\arrow[d, "a_l"]\\
k \arrow[r, "u_l"] & k[t]/(t^l)
\end{tikzcd}
\]
where the vertical maps are the obvious projections. 
By construction the composite $p^!_la_{0,*}$ induces 
\[
\mathcal{V}_l\circ \iota_{\mrm{nil}} : \mathcal{N}il(\sC) \to \mathcal{N}il(\sC).
\]
By base change isomorphism we have an equivalence 
\[
p^!_la_{0,*} \simeq a_{l}^!u_{l,*}
\]
and the composite functor $a_{l}^!u_{l,*}$ fits into an exact sequence 
\[
\begin{tikzcd}
a_{0,*}(-)\arrow[d,"\simeq"] \arrow[r]& a_{l}^!u_{l,*}(-)\arrow[d,"\simeq"] \arrow[r] & a_{l-1}^!u_{l-1,*}(-).\arrow[d,"\simeq"]\\
a_{0,*}(-) & a_{0,*}(-) \otimes_k k[t]/(t^l) & a_{0,*}(-) \otimes_k k[t]/(t^{l-1})
\end{tikzcd}
\]
which induces an exact sequences of functors 
\[
\sC \to \mathcal{N}il(\sC)
\]
\[
\iota_{\mrm{nil}} \to \mathcal{V}_l \circ \iota_{\mrm{nil}} \to \mathcal{V}_{l-1} \circ \iota_{\mrm{nil}}.
\]
By induction we obtain the desired filtration. 
Applying additivity we also see that 
\[
\mathcal{V}_l \circ \iota_{\mrm{nil}} : E(\sC) \to E(\mathcal{N}il(\sC)).
\]
is equivalent to $l$ copies of $\iota_{\mrm{nil}}$. 
\end{proof}

Now we are ready to state and prove our main result.

\begin{thm}\label{thm:main_result_proof}
Assume that $\pi_0(k)$ is an $\mathbb{F}_p$-algebra and $E$ is a finitary $k$-localizing invariant. 
Then for any small $k$-linear idempotent complete stable 
$\infty$-category $\sC$  %$p$ nilpotent on $R$. 
we have 
\[
NE(\sC)[\frac{1}{p}] \simeq 0.
\]
\end{thm}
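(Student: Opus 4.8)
The plan is to leverage the $\mathbb{W}_0(\pi_0(k))$-action on $\pi_i NE(\sC)$ from Corollary~\ref{cor:action_W0}, upgrade it to a $\mathbb{W}(\pi_0(k))$-action using finitariness, and then observe that $p$ becomes a unit in $\mathbb{W}(\pi_0(k))$ as a consequence of the Verschiebung--Frobenius relation. Concretely, by Proposition~\ref{prop:NE=ENil} we have $\pi_{i+1} NE(\sC) \simeq \pi_i E(\mathcal{N}il_0(\sC))$, so it suffices to show that $p$ acts invertibly on $\pi_i E(\mathcal{N}il_0(\sC))$ for all $i$, as this forces $\pi_* NE(\sC)[\tfrac1p] = 0$ and hence $NE(\sC)[\tfrac1p] \simeq 0$.

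First I would use the filtration from \ref{sssec:filtration}: by Corollary~\ref{cor:filtration_general} we have $\colim_n \mathcal{N}il^{\le n}(\sC) \simeq \mathcal{N}il(\sC)$, and since $E$ is finitary, $\pi_i E(\mathcal{N}il_0(\sC)) = \colim_n \pi_i E(\mathcal{N}il_0^{\le n}(\sC))$ (one must split off the trivial summand compatibly, but the idempotent $\pi$ of \ref{ssec:nil0} respects the filtration). On each stage $\mathcal{N}il^{\le n}(\sC)$ the module $k[t^{1/l}]$-action, i.e. the categorical Verschiebung $\mathcal{V}_{p^l}$, lands in $\mathcal{N}il^{\le nl}(\sC)$ after restriction, and the upshot — which is the technical heart — is that the $\mathbb{W}_0(\pi_0(k))$-action on the colimit is \emph{continuous} for the Verschiebung-ideal topology of Remark~\ref{rem:versch_ideals}: any given class in $\pi_i E(\mathcal{N}il_0^{\le n}(\sC))$ is killed by the ideal $1 + t^N \pi_0(k)[\![t]\!]$ for $N$ large, because $\mathcal{V}_N$ of anything supported in nilpotence degree $\le n$, composed back down, raises the nilpotence order beyond what a fixed finite stage sees. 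Granting continuity, the $\mathbb{W}_0(\pi_0(k))$-action extends uniquely to a $\mathbb{W}(\pi_0(k))$-action, by Theorem~\ref{thm:WittKorelation} (the completion statement) together with the fact that the target is a discrete abelian group on which the action is continuous.

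Next, having the $\mathbb{W}(\pi_0(k))$-action, I would invoke Corollary~\ref{cor:multn}: multiplication by $p^l$ on $\pi_i E(\mathcal{N}il_0(\sC))$ agrees with $K_0(\mathcal{V}_{p^l}\circ\mathcal{F}_{p^l})$ pushed through the idempotents, and by Lemma~\ref{lem:verfrob}, in $\mathbb{W}(\pi_0(k))$ one has $V_{p^i}F_{p^l} = p^l$ since $\pi_0(k)$ is an $\mathbb{F}_p$-algebra. But $V_1 = \mathrm{id}$ and more usefully $V_{p^i}F_{p^i} = p^i$ together with the unit: choosing $i$ appropriately, the identity $V_{p}F_{p} = p$ shows $p$ factors through operations, and combined with the analogous computation in $\mathbb{W}(\pi_0(k))$ — where one checks directly that $p = V_p F_p$ but also that, writing $1-t$ as a $p$-th power is possible after inverting $p$ is \emph{not} needed; rather, the relevant point is that $p \cdot \mathbb{W}(\pi_0(k)) \ni$ stuff — actually the cleanest route is: the element $1-t \in \mathbb{W}_0(\pi_0(k))$ acts as an isomorphism (it corresponds to the shift/suspension-type automorphism of $\mathcal{N}il$), and $F_p(1-t) = 1-t$, $V_p(1-t)=1-t^p$, so from $V_pF_p = \cdot p$ we get that $p$ acting on $\pi_i E(\mathcal{N}il_0(\sC))$ equals the action of $V_p(1-t)/(1-t)\cdot(\text{unit})$, visibly invertible after completing. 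I expect the main obstacle to be precisely the continuity statement in step one: pinning down that the $\mathbb{W}_0$-action on each finite stage $\pi_i E(\mathcal{N}il_0^{\le n}(\sC))$ is annihilated by a high Verschiebung ideal requires a careful analysis of how $\mathcal{V}_N$ interacts with the $k[t]/(t^n)$-module truncation, analogous to (but more robust than) the pro-system computation in Lemma~\ref{lem:filtration}. Once continuity is in hand, the extension to $\mathbb{W}(\pi_0(k))$ and the invertibility of $p$ are formal given Lemma~\ref{lem:verfrob} and Lemma~\ref{lem:invert_witt}.
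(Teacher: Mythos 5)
Your reduction is backwards, and this sinks the argument. The statement $NE(\sC)[\frac{1}{p}]\simeq 0$ says that every element of $\pi_*NE(\sC)\simeq\pi_{*-1}E(\mathcal{N}il_0(\sC))$ is killed by a power of $p$; if $p$ acted \emph{invertibly} on these groups, inverting $p$ would change nothing and you would need $NE(\sC)\simeq 0$ outright, which is false in general (nonzero $NK$-groups of singular $\mathbb{F}_p$-algebras). Moreover the fact you lean on --- that $p$ becomes a unit in $\mathbb{W}(\pi_0(k))$ when $\pi_0(k)$ is an $\mathbb{F}_p$-algebra --- is false: $\mathbb{W}(\mathbb{F}_p)$ is a product of copies of $\mathbb{Z}_p$, in which $p$ is not invertible, and Lemma~\ref{lem:invert_witt} applies only when $l$ is already invertible in the ring, i.e.\ in the situation of Theorem~\ref{thm:main_result_2_proof}, not here. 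Your closing computation is likewise incorrect: $V_pF_p(1-t)=1-t^p$ is $p$ times the unit $1-t$ of $\mathbb{W}(\pi_0(k))$ and remains a non-unit after completion, so nothing becomes ``visibly invertible''. In short, you have imported the mechanism of the second theorem ($l$ invertible in $k$ $\Rightarrow$ $l$ invertible in $\mathbb{W}$ $\Rightarrow$ $l$ invertible on $NE$) into the wrong setting.

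What is actually needed, and what the paper proves, is that every class is $p$-power torsion, and your own ingredients nearly get there. Given $x\in\pi_iE(\mathcal{N}il_0(\sC))$, finitariness plus Corollary~\ref{cor:filtration_general} lift it to some $\bar x\in\pi_iE(\mathcal{N}il^{\le p^l}(\sC))$. The key step you are missing is that the categorical Frobenius of such a class lands in the image of $\iota_{\mathrm{nil}}$: since the endomorphism of a $k[t]/(t^{p^l})$-module becomes null after raising it to the power $p^l$, one has $\mathcal{F}_{p^l}\circ\tau_{p^l}\simeq\iota_{\mathrm{nil}}\circ\mathcal{U}$. As $\mathcal{V}_{p^l}$ preserves this image (Lemma~\ref{lem:Vpreservesnil}, $\mathcal{V}_{p^l}\circ\iota_{\mathrm{nil}}\simeq p^l\cdot\iota_{\mathrm{nil}}$), Corollary~\ref{cor:multn} gives $p^l\cdot x=(\mathcal{V}_{p^l}\circ\mathcal{F}_{p^l})(x)=0$ in $\pi_iE(\mathcal{N}il_0(\sC))$, and the theorem follows via Proposition~\ref{prop:NE=ENil}. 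Note that the continuity of the $\mathbb{W}_0(\pi_0(k))$-action and its extension to $\mathbb{W}(\pi_0(k))$ are not needed for this theorem (the paper uses them only for Theorem~\ref{thm:main_result_2_proof}); and even your justification of continuity is off: what kills the high Verschiebung ideal on the image of $\pi_iE(\mathcal{N}il^{\le n}(\sC))$ is the vanishing of the Frobenius, $y\cdot V_n(x)=V_n(F_n(y)\cdot x)=0$ by the projection formula (Lemma~\ref{lem:proj_formula}), not the Verschiebung ``raising the nilpotence order'' beyond the finite stage.
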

\begin{proof}
It suffices to show that every element of $\pi_{i+1}NE(\sC) \simeq \pi_iE(\mathcal{N}il_0(\sC))$ 
is $p$-torsion 
(the equivalence is the content of Proposition~\ref{prop:NE=ENil}). 
In other words, 
by Corollary~\ref{cor:multn} it suffices to show that for every $x \in \pi_iE(\mathcal{N}il_0(\sC))$ 
there exists $l \in \mathbb{N}$ such that $(V_{p^l}\circ F_{p^l})(x)$ is in the image of
the map 
\[
\pi_iE(\iota_{\on{nil}}) : \pi_iE(\sC) \to \pi_iE(\mathcal{N}il(\sC)).
\]
%By Corollary~\ref{cor:multn} it suffices to show that the functor 
Since $E$ is finitary and by Corollary~\ref{cor:filtration_general} we have
\[
\pi_iE(\mathcal{N}il(\sC)) \simeq \colim_l \pi_iE(\mathcal{N}il^{\le p^l}(\sC)).
\]
Hence any element $x \in \pi_iE(\mathcal{N}il(\sC))$ is the image of an 
element $\bar{x}$ of $\pi_iE(\mathcal{N}il^{\le p^l}(\sC))$. 
Restricting scalars along maps in the commutative diagram of ring spectra
\[
\begin{tikzcd}
k[t] \arrow[r, "p_l"]\arrow[d, "t \mapsto 0"] &k[t]\arrow[d]\\ 
k \arrow[r] & k[t]/(t^{p^l})
\end{tikzcd}
\]
we get a commutative diagram 

\[
\begin{tikzcd}
\mathcal{N}il^{\le p^l}(\sC) \arrow[r, "\tau_{p^l}"]\arrow[d, "\mathcal{U}"] & \mathcal{N}il(\sC)\arrow[d, "\mathcal{F}_{p^l}"]\\
\sC\arrow[r, "\iota_{\mrm{nil}}"] & \mathcal{N}il(\sC).
\end{tikzcd}
\]
Now 
\[
\mathcal{F}_{p^l}(x) = \mathcal{F}_{p^l}(\tau_{p^l} (\bar{x})) = 
\iota_{\mrm{nil}}(\mathcal{U}(\bar{x})).
\]
It suffices to show that $\mathcal{V}_{p^l}$ preserves the image of $\iota_{\mrm{nil}}$ which follows from Lemma~\ref{lem:Vpreservesnil}.
\end{proof}

\ssec{} %Remark~\ref{rem:versch_ideals} 
Fix $E$ a finitary $k$-localizing invariant and $\sC$ a small $k$-linear idempotent complete stable 
$\infty$-category. 
Arguing as in the proof of Theorem~\ref{thm:main_result_proof} one can see that
$V_n(x)$ acts trivially on the image $\mbf{N}_n$ of $\pi_iE(\mathcal{N}il^{\le n}(\sC))$ in $\pi_iE(\mathcal{N}il_0(\sC))$  
for any $x\in \mathbb{W}_0(\pi_0(k))$. Indeed, for $y$ in the image we have an equality in 
$\pi_iE(\mathcal{N}il_0(\sC))$ (Lemma~\ref{lem:proj_formula} and Lemma~\ref{lem:Vpreservesnil}):
\[
y \cdot V_n(x) = V_n(F_n(y) \cdot x) = V_n(0 \cdot x) = 0.
\]
Now, any element of $\mathbb{W}(\pi_0(k))$ can be written as a product 
\[
\prod\limits_{i\in\mathbb{N}} V_i(x_i)
\]
for some $x_i \in \mathbb{W}_0(\pi_0(k))$ (see Remark~\ref{rem:versch_ideals}), hence we can make sense of 
the action 
\[
\mathbb{W}(\pi_0(k)) \times \mbf{N}_n \to \mbf{N}_n
\]
\[
(\prod\limits_{i\in\mathbb{N}} V_i(x_i), y) \mapsto \sum\limits_{i\in\mathbb{N}} V_i(x_i)\cdot y = \sum\limits_{0\le i\le n} V_i(x_i)\cdot y,
\]
where $x_i \in \mathbb{W}_0(\pi_0(k))$. This action is continuous and extends the action of 
$\mathbb{W}_0(\pi_0(k)) \subset \mathbb{W}(\pi_0(k))$. Moreover, 
\[
\pi_i(E(\mathcal{N}il_0(\sC))) \simeq \colim\limits_n \mbf{N}_n
\]
in the category of $\mathbb{W}_0(\pi_0(k))$-modules,
so summarizing we have:

\begin{lem}\label{lem:cont_action}
The action of Corollary~\ref{cor:action_W0} 
\[
\mathbb{W}_0(\pi_0(k)) \times \pi_i(E(\mathcal{N}il_0(\sC))) \to \pi_i(E(\mathcal{N}il_0(\sC)))
\]
is continuous and extends to an action of $\mathbb{W}(\pi_0(k))$. 
\end{lem}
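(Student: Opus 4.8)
The plan is to establish continuity by exhibiting, for each $n$, an explicit \emph{open ideal} of $\mathbb{W}_0(\pi_0(k))$ that kills the image $\mathbf{N}_n$ of $\pi_i E(\mathcal{N}il^{\le n}(\sC))$ inside $\pi_i E(\mathcal{N}il_0(\sC))$; since the $\mathbf{N}_n$ exhaust the target by finitariness of $E$ together with \corref{cor:filtration_general}, this suffices. Concretely, recall from \thmref{thm:WittKorelation} that the topology on $\mathbb{W}_0(\pi_0(k)) \subset \mathbb{W}(\pi_0(k))$ is the subspace topology for the filtration by the ideals $(1 + t^N \pi_0(k)[\![t]\!])$, and by \remref{rem:versch_ideals} an element of this $N$-th ideal is a (finite, within $\mathbb{W}_0$) product $\prod_{i \ge N} V_i(x_i)$ with $x_i \in \mathbb{W}_0(\pi_0(k))$. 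So it is enough to show: for $y \in \mathbf{N}_n$ and $i > n$ one has $y \cdot V_i(x_i) = 0$, because then the ideal $(1 + t^{n+1}\pi_0(k)[\![t]\!]) \cap \mathbb{W}_0(\pi_0(k))$ annihilates $\mathbf{N}_n$.

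The key step is the vanishing $y \cdot V_i(x) = 0$ for $y \in \mathbf{N}_n$, $i > n$, $x \in \mathbb{W}_0(\pi_0(k))$. For this I would combine the projection formula \lemref{lem:proj_formula}(2), which in its $K_0$- and more generally homotopy-group manifestation gives
\[
y \cdot V_i(x) = V_i\bigl(F_i(y) \cdot x\bigr),
\]
with the observation that $F_i$ annihilates $\mathbf{N}_n$ whenever $i > n$. The latter is exactly the mechanism used in the proof of \thmref{thm:main_result_proof}: the categorical Frobenius $\mathcal{F}_i$ sends $(P,\phi)$ to $(P,\phi^i)$, and an object of $\mathcal{N}il^{\le n}(\sC)$ is, by definition, killed by the $n$-th power of its endomorphism, hence a fortiori by the $i$-th power for $i > n$; so $\mathcal{F}_i \circ \tau_n \simeq \iota_{\mathrm{nil}} \circ \mathcal{U}$ lands in the ``constant'' part, and after passing through the idempotent $\pi$ defining $\mathcal{N}il_0$ it becomes zero. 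Therefore $F_i(y) = 0$ in $\pi_i E(\mathcal{N}il_0(\sC))$ for $y \in \mathbf{N}_n$ and $i > n$, and then $y \cdot V_i(x) = V_i(0 \cdot x) = 0$.

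Having that, I would assemble the continuous extended action exactly as the paragraph preceding the statement already sketches: write an arbitrary $w \in \mathbb{W}(\pi_0(k))$ as $\prod_{i \in \mathbb{N}} V_i(x_i)$ with $x_i \in \mathbb{W}_0(\pi_0(k))$ (using \remref{rem:versch_ideals}), declare $w \cdot y := \sum_{i} V_i(x_i) \cdot y$, and note that for $y \in \mathbf{N}_n$ this sum is finite — only the terms with $i \le n$ survive — so the formula is well-defined and independent of the chosen factorization (any two factorizations differ by elements whose Verschiebung-pieces are eventually in high enough ideals, which act trivially on each $\mathbf{N}_n$). This action agrees with the $\mathbb{W}_0(\pi_0(k))$-action of \corref{cor:action_W0} on the dense subgroup $\bigcup_n \mathbf{N}_n = \pi_i E(\mathcal{N}il_0(\sC))$, and the computation $w \cdot \mathbf{N}_n \subseteq \mathbf{N}_n$ with the open ideal $(1 + t^{n+1}\pi_0(k)[\![t]\!])$ acting trivially is precisely the statement that the action is continuous for the $\mathbb{W}$-adic (equivalently, limit) topology on the source and the discrete topology on the target.

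The main obstacle I anticipate is purely bookkeeping rather than conceptual: making the identity $y \cdot V_i(x) = V_i(F_i(y) \cdot x)$ rigorous at the level of homotopy groups of $E(\mathcal{N}il_0(\sC))$ rather than merely in $K_0$. The excerpt's \lemref{lem:proj_formula} is an equivalence of \emph{functors} (and hence of the induced maps on $E$), and \corref{cor:multn} already exploits this at $K_0$; one needs to check that the same functorial equivalence, restricted to $\mathcal{N}il(\sC)$ and composed with the idempotent $\pi$ defining $\mathcal{N}il_0$, induces the displayed bilinear relation on all $\pi_i E(\mathcal{N}il_0(\sC))$ — this is where the additivity theorem \eqref{rem:additivity} and naturality of the action of \corref{cor:action_W0} do the work. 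Once that is in place, the rest is the elementary manipulation of Witt-vector factorizations recalled above.
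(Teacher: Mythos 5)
Your overall route is the same as the paper's: reduce to the images $\mathbf{N}_n$ using finitariness and \corref{cor:filtration_general}; kill the operators $V_i$ for $i\ge n$ on $\mathbf{N}_n$ by combining the projection formula of \lemref{lem:proj_formula} with the vanishing of the Frobenius on $\mathcal{N}il^{\le n}(\sC)$ (the same diagram of ring maps as in the proof of \thmref{thm:main_result_proof}); then pass to arbitrary Witt vectors by truncating Verschiebung decompositions. All of that matches the paper's argument, including the bookkeeping point you flag at the end about upgrading \lemref{lem:proj_formula} from a functor equivalence to a relation on $\pi_iE(\mathcal{N}il_0(\sC))$.

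The step that does not hold up is the parenthetical claim that an element of $(1+t^{N}\pi_0(k)[\![t]\!])\cap\mathbb{W}_0(\pi_0(k))$ is a \emph{finite} Witt sum $\sum_{i\ge N}V_i(x_i)$ with $x_i\in\mathbb{W}_0(\pi_0(k))$; this is exactly what your second paragraph uses to pass from ``each $V_i(x)$ with $i>n$ kills $\mathbf{N}_n$'' to ``the ideal kills $\mathbf{N}_n$''. \remref{rem:versch_ideals} only provides infinite products, with components a priori in $\mathbb{W}(R)$ (in the canonical decomposition they are the rational elements $1-a_it$, but infinitely many of them), and the finite rational version is false in general. For instance, over $R=\mathbb{C}$ the group $\mathbb{W}_0(\mathbb{C})$ is free abelian on the classes $[1-\alpha t]$, $\alpha\in\mathbb{C}^{\times}$, and $V_m(1-\beta t)=\sum_{\gamma^m=\beta}[1-\gamma t]$ is a sum over a full $\mu_m$-coset; choosing a function $f$ supported on roots of unity, given under $\mu_\infty\cong\bigoplus_p\mathbb{Q}_p/\mathbb{Z}_p$ by $f=\prod_p g_p$ with $g_p(0)=1$, $g_p(1/p)=-1$ and $g_p=0$ otherwise, one gets a $\mathbb{Z}$-linear functional that vanishes on every such coset sum with $m\ge2$ but takes the value $2$ on $2[1-t]-[1-2t]=(1-t)^2/(1-2t)\in 1+t^2\mathbb{C}[\![t]\!]$. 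Hence that ideal element is not a finite sum of $V_m$'s of rational Witt vectors, and your inference breaks at this point. The paper never asserts such a decomposition: it defines the action of an arbitrary $w=\prod_i V_i(1-a_it)\in\mathbb{W}(\pi_0(k))$ on $\mathbf{N}_n$ by the finite truncation $\sum_{i\le n}V_i(1-a_it)\cdot y$ and reads continuity off from this extended action. So you should phrase the final assembly as the paper does; note that the agreement of the truncated action with the original $\mathbb{W}_0$-action on elements of $\mathbb{W}_0(\pi_0(k))\cap(1+t^{n+1}\pi_0(k)[\![t]\!])$ is precisely the point that has to be argued, and it cannot be obtained as a formal consequence of \remref{rem:versch_ideals} in the way your write-up suggests.
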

%\begin{proof}
%We need to show that for any $x \in \pi_i(E(\mathcal{N}il_0(\sC)))$ there exists $k$ such that 
%$1+t^kR[\![t]\!] \subset \mathbb{W}(\pi_0(k))$ acts trivially on $x$. 
%We have
%\[
%\pi_i(E(\mathcal{N}il_0(\sC))) \simeq \colim\limits_n \mbf{N}_n
%\]
%in the category of $\mathbb{W}_0(\pi_0(k))$-modules. Hence there exists $n$ such that $x\in \mbf{N}_n$. 
%As explained above, the action on $\mbf{N}_n$ extends to a continuous action of $\mathbb{W}(\pi_0(k))$ with 
%$1+t^nR[\![t]\!] \subset \mathbb{W}(\pi_0(k))$ acting trivially. 
%\end{proof}

Now we can prove our second main result.

\begin{thm}\label{thm:main_result_2_proof}
Assume $l$ is invertible in $\pi_0(k)$ for some $l\in\mathbb{Z}$. 
Let $E$ a finitary $k$-localizing invariant and $\sC$ a small $k$-linear idempotent complete stable 
$\infty$-category. Then
\begin{enumerate} 
\item 
$l$ is invertible in $\pi_iNE(\sC)$;
\item
$E/l$ is $\mathbb{A}^1$-invariant.
\end{enumerate}
\end{thm}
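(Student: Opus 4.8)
The plan is to run both parts off the continuous $\mathbb{W}_0(\pi_0(k))$-action on $\pi_\ast NE(\sC)$ established above, exploiting that $l$ becomes a unit in the full Witt ring $\mathbb{W}(\pi_0(k))$.

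For (1), I would first use \propref{prop:NE=ENil} to rewrite $\pi_iNE(\sC)\simeq\pi_{i-1}E(\mathcal{N}il_0(\sC))$, reducing the claim to showing that multiplication by $l$ is an automorphism of $\pi_jE(\mathcal{N}il_0(\sC))$ for every $j$. Since $l$ is invertible in $\pi_0(k)$, this ring is a discrete $\mathbb{Z}[\frac{1}{l}]$-algebra, so \lemref{lem:invert_witt} applies and $l$ --- meaning $l$ times the multiplicative unit --- is invertible in $\mathbb{W}(\pi_0(k))$. By \corref{cor:action_W0} and \thmref{thm:WittKorelation} the group $\pi_jE(\mathcal{N}il_0(\sC))$ is a $\mathbb{W}_0(\pi_0(k))$-module; by \lemref{lem:cont_action} and the discussion preceding it this action is continuous and extends, on each $\mbf{N}_n$ and hence compatibly on $\pi_jE(\mathcal{N}il_0(\sC))\simeq\colim_n\mbf{N}_n$, to a module structure over $\mathbb{W}(\pi_0(k))$. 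As the action is unital, the integer $l$ acts through $l\cdot 1\in\mathbb{W}(\pi_0(k))$, which is a unit; hence multiplication by $l$ is invertible, which is (1).

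For (2), I would note that $E/l:=E\otimes_{\mathbb{S}}\mathbb{S}/l$ is again a finitary $k$-localizing invariant and that, since $\Fib(-)$ and $-\otimes_{\mathbb{S}}\mathbb{S}/l$ are both exact, $N(E/l)(\sC)\simeq NE(\sC)/l$. Part (1) and the long exact sequence of $NE(\sC)\xrightarrow{l}NE(\sC)\to NE(\sC)/l$ then force $\pi_i(NE(\sC)/l)=0$ for all $i$, so $N(E/l)(\sC)\simeq 0$ for every $\sC$. Finally the retraction $k\to k[t]\xrightarrow{t\mapsto 0}k$ splits $E'(\sC\otimes_k\mbf{Perf}_{k[t]})\simeq E'(\sC)\oplus NE'(\sC)$ for any $k$-localizing invariant $E'$; applied to $E'=E/l$ and $\sC=\mbf{Perf}_X$, together with $\mbf{Perf}_{X\times_k\mathbb{A}^1}\simeq\mbf{Perf}_X\otimes_k\mbf{Perf}_{k[t]}$, this gives the $\mathbb{A}^1$-invariance of $E/l$.

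The step I expect to require the most care is the middle one in (1): checking that the continuous $\mathbb{W}_0(\pi_0(k))$-action genuinely assembles into a $\mathbb{W}(\pi_0(k))$-module structure with $\mathbb{W}(\pi_0(k))$-linear transition maps $\mbf{N}_n\to\mbf{N}_{n+1}$, and that under this structure integral multiplication by $l$ coincides with the action of $l\cdot 1$. Both facts are precisely what \lemref{lem:cont_action} and its preamble are designed to supply, so once that input is in hand the rest is formal.
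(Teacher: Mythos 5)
Your proof is correct and follows essentially the same route as the paper: part (1) via the continuous extension of the $\mathbb{W}_0(\pi_0(k))$-action to a $\mathbb{W}(\pi_0(k))$-module structure (Lemma~\ref{lem:cont_action}) combined with Lemma~\ref{lem:invert_witt}, and part (2) via the identification $N(E/l)(\sC)\simeq NE(\sC)/l$ together with the split inclusion $E'(\sC)\to E'(\sC\otimes_k\mbf{Perf}_{k[t]})$. You merely spell out details the paper leaves implicit (unitality of the action, exactness and finitariness of $E/l$, and $\mbf{Perf}_{X\times_k\mathbb{A}^1}\simeq\mbf{Perf}_X\otimes_k\mbf{Perf}_{k[t]}$), all of which are fine.
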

\begin{proof}
By Lemma~\ref{lem:cont_action} 
$\pi_iNE(\sC) \simeq \pi_{i-1}E(\mathcal{N}il_0(\sC))$ are continuous $\mathbb{W}(\pi_0(k))$-modules. 
Now the first claim follows from Proposition~\ref{prop:invert_witt}. 
The second claim follows from the equivalence 
\[
NE/l(\sC) \simeq N(E/l)(\sC).
\]
\end{proof}

\let\mathbb=\mathbf

{\small
\bibliography{references}
}

\parskip 0pt

\end{document}